\title[Vanishing dissipation limit for 3D Navier-Stokes equations]
{Vanishing dissipation limit to the planar rarefaction wave for the three-dimensional compressible Navier-Stokes-Fourier equations}
\author[L.-A. Li]{Lin-An Li}
\address{Research Center for Mathematics and Mathematics Education, Beijing Normal University at Zhuhai, 519087, China;
  and Laboratory of Mathematics and Complex Systems (Ministry of Education), School of Mathematical Sciences, Beijing Normal University, Beijing 100875,  P. R. China.}
\email{linanli@amss.ac.cn}
\author[D. Wang]{Dehua Wang}
\address{Department of Mathematics, University of Pittsburgh, Pittsburgh, PA 15260, USA.}
\email{dwang@math.pitt.edu}
\author[Y. Wang]{Yi Wang}
\address{
Institute of Applied Mathematics, AMSS, Chinese Academy of Sciences, Beijing 100190, P. R. China;
 and School of Mathematical Sciences, University of Chinese Academy of Sciences, Beijing 100049, P. R. China.}
\email{wangyi@amss.ac.cn}
\newtheorem{theorem}{Theorem}[section]
\newtheorem{lemma}{Lemma}[section]
\newtheorem{proposition}{Proposition}[section]
\theoremstyle{definition}
\theoremstyle{remark}
\newtheorem{remark}{Remark}[section]
\newcommand{\bbr}{\mathbb R}
\newcommand{\bbt}{\mathbb T}
\newcommand{\bv}{\mbox{\boldmath $v$}}
\newcommand{\bq}{\mbox{\boldmath $q$}}
\renewcommand{\div}{{\rm div}}
\def\charf {\mbox{{\text 1}\kern-.30em {\text l}}}
\def\lam{\lambda}  
\def\di{\displaystyle}
\begin{document}

\date{\today}

\begin{abstract}

We study the vanishing dissipation limit of the three-dimensional (3D) compressible Navier-Stokes-Fourier equations to the corresponding 3D full Euler equations. Our results are twofold. First,  we prove that the 3D compressible Navier-Stokes-Fourier equations admit a family of smooth solutions  that converge  to the planar rarefaction wave solution of the 3D compressible Euler equations with arbitrary strength. Second,  we obtain a uniform convergence rate  in terms of the viscosity and heat-conductivity coefficients. For this multi-dimensional problem, we first need to introduce the hyperbolic wave to recover the physical dissipations of the  inviscid rarefaction wave profile as in our previous work \cite{LWW2} on the two-dimensional (2D) case. However, due to the 3D setting that makes the analysis significantly more challenging  than the 2D problem, the hyperbolic scaled variables for the space and time could not be  used to normalize the dissipation coefficients as in the 2D case. Instead, the analysis of the 3D case is carried out in the original non-scaled variables, and consequently  the dissipation terms are more singular compared with the 2D scaled case. Novel ideas and techniques are developed to establish the uniform estimates. In particular, more accurate {\it a priori} assumptions (see \eqref{PA}) with respect to the dissipation coefficients are crucially needed for the stability analysis, and some new observations on the cancellations of the physical structures for the flux terms are essentially used to justify the 3D limit. Moreover, we find that the decay rate with respect to the dissipation coefficients is determined by the nonlinear flux terms in the original variables for the 3D limit in this paper, but fully determined by the error terms in the scaled variables for the 2D case in \cite{LWW2}.

\end{abstract}

\keywords{Vanishing dissipation limit, planar rarefaction wave, Navier-Stokes-Fourier equations, Euler equations, hyperbolic wave, decay rate.}
\subjclass[2010]{76N10, 35Q35}	
\date{\today}

\maketitle 
%
%
\section{Introduction}  
\setcounter{equation}{0}
We are concerned with the vanishing dissipation limit of the three-dimensional Navier-Stokes-Fourier equations describing the motion of compressible, viscous and heat-conducting flows:
\begin{equation}  \label{NS}
\begin{cases}
\displaystyle \rho_t + \div(\rho \bv) = 0,    \\ 
\displaystyle (\rho \bv)_t + \div(\rho \bv \otimes \bv) + \nabla p = \div\mathbb{S}, \\
\displaystyle (\rho E)_t+\div (\rho E \bv + p \bv+ \bq)=\div(\bv\mathbb{S}),
\end{cases}
\end{equation}
where $\rho=\rho(t,x)$ denotes the density, $\bv=\bv(t,x)=(v_1,v_2,v_3)(t,x)\in \mathbb{R}^3$ the velocity, and $p=p(t,x)$ the pressure, with
the temporal variable  $t\in\mathbb{R}^+=(0,\infty)$ 
and the spatial variable $x=(x_1,x_2,x_3)\in \mathbb{R}^3$.
Moreover, 
$E=e+\frac12|\bv|^2$ is the specific total energy with the specific internal energy $e=e(t,x)$, and $\mathbb{S}$ is the viscous stress tensor defined by
\begin{equation}
\mathbb{S}= 2\mu_1 \mathbb{D}(\bv) + \lambda_1\div\bv\mathbb{I},
\end{equation}
where $\mathbb{D}(\bv)=\frac{\nabla \bv + (\nabla \bv)^\top}{2}$ is the deformation tensor with $(\nabla \bv)^\top$ denoting the transpose of the matrix $\nabla \bv$,  $\mathbb{I}$ represents the $3\times3$ identity matrix, and the two constants $\mu_1$ and $\lambda_1$ denote the shear  and the bulk viscosity coefficients   
satisfying the physical restrictions:
\begin{equation} \label{VIS}
\mu_1>0,\quad 2\mu_1 + 3\lambda_1 \geq0.
\end{equation}
By the Fourier laws, the heat flux $\bq$ is given by
\begin{equation}
\bq=-\kappa_1\nabla \theta,
\end{equation}
where $\theta=\theta(t,x)$ denotes  the   absolute temperature, and    the constant $\kappa_1>0$ is the heat-conductivity coefficient. In the present paper we take
\begin{equation}\label{1.4}
\mu_1=\mu\varepsilon, \quad\lambda_1=\lambda\varepsilon,\quad \kappa_1=\kappa\varepsilon,
\end{equation}
 where $\varepsilon>0$ is the vanishing dissipation parameter, and $\mu, \lambda$ and $\kappa$ are the given uniform-in-$\varepsilon$ constants with $\mu, \lambda$ still satisfying the physical condition \eqref{VIS} and $\kappa>0$. The equations \eqref{NS}  describe  the mass continuity, the balance of momentum, and the conservation of total energy for the viscous heat-conducting flow. For the ideal polytropic flow  the pressure $p$ and the internal energy $e$ satisfy the following constitutive relations:
\begin{equation}
p=R\rho\theta=A\rho^{\gamma}\exp\Big(\frac{\gamma-1}{R}S\Big),\quad e=\frac{R}{\gamma-1}\theta,
\end{equation}
where $S$ denotes the entropy,  and $\gamma>1$, $A$ and $R$ are all positive fluid constants.

 In this paper, we consider the viscous system \eqref{NS}  in
 the domain $\Omega=\mathbb{R}\times\mathbb{T}^2$ with 
$\mathbb{T}^2= (\mathbb{R}/ \mathbb{Z})^2$ denoting a two-dimensional unit flat torus, subject to 
the following initial condition: 
\begin{equation} \label{NSI}
(\rho, \bv,\theta)(0, x)=(\rho_0, \bv_0, \theta_0)(x),
\end{equation}
where $\rho_0, \theta_0>0$ and $\bv_0 := (v_{10}, v_{20}, v_{30})$ satisfy the periodic boundary condition for $(x_2,x_3)\in\mathbb{T}^2$.
We shall study the vanishing dissipation limit  in the case of the planar rarefaction wave for the dissipative system \eqref{NS},  with the following far field condition of   solutions  imposed in the $x_1$-direction:
\begin{equation}\label{ff}
(\rho, \bv, \theta)(t, x) \to (\rho_\pm, \bv_\pm, \theta_\pm), \quad {\rm as}\quad x_1\to \pm\infty,~ t>0,
\end{equation}
where $\bv_\pm :=(v_{1\pm},0,0)$ and $\rho_\pm, ~\theta_\pm>0, ~v_{1\pm}$ are prescribed constant states, 
such that the two end states $(\rho_\pm, v_{1\pm}, \theta_\pm)$ are connected by the rarefaction wave of the following Riemann problem for the one-dimensional (1D)    Euler equations:
\begin{equation}  \label{ES}
\begin{cases}
\displaystyle \rho_t + (\rho v_1)_{x_1} = 0,               \quad\qquad \qquad x_1 \in \bbr, ~t > 0, \\
\displaystyle (\rho v_1)_t + (\rho v_1^2 + p)_{x_1} = 0, \\
\displaystyle (\rho E)_t + (\rho E v_1 + pv_1)_{x_1} = 0,
\end{cases}
\end{equation}
with  the   initial data:
\begin{equation} \label{ESI}
(\rho, v_1, \theta)(0, x_1) = (\rho_0^r, v_{10}^r, \theta_0^r)(x_1) = \begin{cases}
(\rho_-, v_{1-}, \theta_-),     \quad x_1 < 0,\\
(\rho_+, v_{1+}, \theta_+),     \quad x_1 > 0.
\end{cases}
\end{equation}
When $\varepsilon\to 0$, the solutions of the 3D compressible Navier-Stokes-Fourier equations \eqref{NS}-\eqref{NSI} with the far field condition \eqref{ff} connected by the rarefaction wave in \eqref{ES}-\eqref{ESI} formally converge to the   solutions of the 3D   Euler equations:
\begin{equation}  \label{2ES}
\begin{cases}
\displaystyle \rho_t + \div(\rho \bv) = 0,               \quad\quad\qquad \qquad (x_1, x_2, x_3) \in \Omega, ~t > 0, \\
\displaystyle (\rho \bv)_t + \div(\rho \bv \otimes \bv) + \nabla p = 0, \\
\displaystyle (\rho E)_t + \div (\rho E \bv + p \bv) = 0,
\end{cases}
\end{equation}
with the following planar Riemann initial data:
\begin{equation} \label{2ESI}
(\rho, \bv, \theta)(0, x) = (\rho_0^r, \bv_0^r, \theta_0^r)(x_1) = \begin{cases}
(\rho_-, \bv_-, \theta_-),     \quad x_1 < 0, \\
(\rho_+, \bv_+, \theta_+),     \quad x_1 > 0.
\end{cases}
\end{equation}

It should be noted that, even though the tangential components $v_2, v_3$ are continuous on the both sides of the plane $\{x_1 = 0\}$ and the jump discontinuity for the velocity $\bv$ lies only in the normal component $v_1$ as in \eqref{2ESI},
the multi-dimensional Riemann problem \eqref{2ES}-\eqref{2ESI} has some essential difference from the one-dimensional   Riemann problem \eqref{ES}-\eqref{ESI}. For instance, 
Chiodaroli, De Lellis and Kreml \cite{CDK} and Chiodaroli and Kreml \cite{CK} first proved that there exist infinitely many bounded admissible weak solutions to the two-dimensional planar shock Riemann problem \eqref{2ES}-\eqref{2ESI} satisfying the natural entropy condition 
and their construction of multi-dimensional weak solutions is formulated by the convex integration method  (c.f. De Lellis and Szekelyhidi \cite{DS}), which may not be applied directly to the 1D Riemann problem \eqref{ES}-\eqref{ESI}.
Their results were extended to the 2D Riemann initial data \eqref{2ESI} with planar shock connected with contact discontinuity or rarefaction wave 
in \cite{KM,BCK}, and the global instability of the multi-dimensional planar shocks for the compressible isothermal Euler flow was shown in \cite{LXY}. 
However, the uniqueness of the uniformly bounded admissible weak solution was obtained in \cite{CC,FK,FKV} 
for the corresponding multi-dimensional Riemann solution to \eqref{2ES}-\eqref{2ESI} containing only planar rarefaction wave.
 Hence the uniqueness of planar rarefaction wave  for the multi-dimensional Riemann problem is exactly the same as the one-dimensional case and is in sharp contrast with the non-uniqueness of the 2D planar shock.
We remark that the non-uniqueness   for the 2D planar shock Riemann solution confirms that the usual entropy inequality could not serve as a criterion to select a unique solution in the case of systems of conservation laws in more than one spatial dimension. Therefore one possible alternative selection criterion is to study the vanishing dissipation limit in the multi-dimensional setting especially for the compressible Navier-Stokes-Fourier equations \eqref{NS} with physical dissipations, even though it is still a big challenging open problem in the general setting due to the complicated and ambiguous structure of the multi-dimensional Euler equations \eqref{2ES}.

On the other hand, compared with the multi-dimensional planar shock wave, the multi-dimension planar rarefaction wave seems more stable for the multi-dimensional compressible Euler equations by the uniqueness results in \cite{CC, FK, FKV}, which motivates us to justify mathematically the vanishing physical dissipation limit of the 3D compressible Navier-Stokes-Fourier equations \eqref{NS}-\eqref{NSI} towards the planar rarefaction wave solution of  the 3D Euler equations \eqref{2ES}-\eqref{2ESI} in the present paper.

 The vanishing viscosity limit  in the one-dimensional case has been studied extensively in literature, including the basic wave patterns.
 Here we briefly mention the works of \cite{GX,Y,BB} for the 1D hyperbolic systems of conservation laws   with  artificial viscosity,
 \cite{HL,X-1,H-L-W,JNS,KV,KV1,W-2,M,HWY-1,HJW,HWY-2,HWWY,CP} for the 1D compressible Navier-Stokes equations with physical viscosities,   \cite{CG18a, CLQ18, CEIV, DN2018, Kato84, BTW18, Masmoudi2007}  for other relevant results; and we refer   to our previous paper \cite{LWW2} for more detailed survey on the 1D problem.
However,   the  multi-dimensional vanishing viscosity limit is a challenging problem and has been less developed. In particular, for the compressible viscous system \eqref{NS} there are only a few results to date on the vanishing dissipation limit towards the planar basic wave patterns. Very recently, we \cite{LWW2} first proved the vanishing viscosity limit to the planar rarefaction wave for the two-dimensional compressible isentropic Navier-Stokes equations in $\bbr\times\bbt$ by introducing the hyperbolic wave and using the hyperbolic scaling variables $\frac x\varepsilon$ and $\frac t\varepsilon$.
The time-asymptotic stability of the planar rarefaction wave to the 2D/3D compressible Navier-Stokes equations with fixed viscosity and heat-conductivity coefficients was proved  in \cite{LW,LWW}. We remark that there are substantial differences between the  asymptotic stability   and the vanishing viscosity limit of planar rarefaction wave in the multi-dimensional setting,  since the error terms for the inviscid rarefaction wave profile have sufficient time-decay rate for the  asymptotic stability, but do not  have enough decay with respect to the dissipation coefficients for the vanishing dissipation limit in the multi-dimensional case.
The goal of this  paper  is to justify mathematically the vanishing dissipation limit to the planar rarefaction wave for the 3D compressible Navier-Stokes-Fourier system \eqref{NS} under the physical constraints \eqref{VIS} and \eqref{1.4}.

In contrast to the 1D vanishing dissipation limits in \cite{X-1, JNS, H-L-W}, the new difficulties here lie in the spatially higher dimensionality for the wave propagation along the transverse $x_2, x_3$-directions and their interactions with the rarefaction wave in the $x_1$-direction. Accordingly, a new hyperbolic wave is crucially introduced to recover the physical dissipation of the full Navier-Stokes-Fourier system for the inviscid rarefaction wave profile, which is partially inspired by  \cite{HWY-2, LWW2}.
Without this hyperbolic wave but with only the rarefaction wave itself as the solution profile, the $H^2$-norm of the  perturbation of the 3D compressible Navier-Stokes-Fourier system around the planar rarefaction wave is not uniform-in-$\varepsilon$ and consequently the vanishing dissipation limit of the planar rarefaction wave could not be justified as in Theorem \ref{theorem1}.

In comparison with the two-dimensional vanishing viscosity limit  for the compressible isentropic Navier-Stokes equations in our previous work \cite{LWW2}, the 3D compressible Navier-Stokes-Fourier system \eqref{NS} is a more physical model involving the thermal conduction, but is significantly more difficult in analysis,  and hence new techniques are needed    mainly due to the higher dimensionality and the additional  energy equation. First, our approach in \cite{LWW2} for the 2D case could not be applied  directly to three-dimensional case here due to the fact that the additional $x_3$ dimensionality would be $O(\frac 1\varepsilon)$ order in the scaled variable $\frac{x_3}\varepsilon$.
 If we still use the hyperbolic scaled variables for the space and time like $\frac x\varepsilon$ and $\frac t \varepsilon$ to normalize the dissipations to be $O(1)$ order as the two-dimensional case in \cite{LWW2}, then the same uniform estimation process with respect to the dissipation coefficients as in \cite{LWW2} could not be obtained directly due to 3D setting. Therefore we use the original non-scaled variables $x$ and $t$ here, but then the dissipative terms are more singular compared with the scaled variables case in \cite{LWW2}.
Consequently, we need to carry out the more accurate {\it a priori} assumptions with respect to the dissipation coefficients (see \eqref{PA}) to overcome this difficulty.  Second, we discover some new   cancellations of the physical structures for the flux terms and viscous terms, which are essentially used to justify the limit process for the 3D planar rarefaction wave.
With these observations and more delicate analysis of energy estimates, we can close the {\it a priori} assumptions and prove the 3D vanishing dissipation limit. The detailed result can be found in Theorem \ref{theorem1} below. Note also that the periodicity of the domain $\Omega$ in $x_2$ and $x_3$ directions and the original non-scaled spatial variables are crucially utilized in the lower order estimates.
For the derivative estimates in Lemmas \ref{lemma4.2} and \ref{lemma4.3} we mainly apply the cancelations between the flux terms, which is quite different from the two-dimensional case in \cite{LWW2}. Moreover, it is observed that the decay rate with respect to the dissipation coefficients is determined by the nonlinear terms in the original variables for  the 3D limit here, while for the two-dimensional case \cite{LWW2} the decay rate is fully determined by the error terms in the scaled variables.

Next we describe the planar rarefaction wave to the 3D Euler system \eqref{2ES}, which can be viewed as a 1D rarefaction wave to the 1D Euler system \eqref{ES}. For $\rho >0$ and $\theta>0$, the strictly hyperbolic system \eqref{ES} has three distinct eigenvalues:
\[
\lambda_j(\rho, v_1, S) = v_1 + (-1)^{\frac{j+1}{2}}\sqrt{p_\rho(\rho, S)},~ j=1,3,\qquad
\lambda_2(\rho, v_1, S) = v_1,~
\]
with the corresponding right eigenvectors
\[
r_j(\rho, v_1, S) = \Big((-1)^{\frac{j+1}{2}}\rho, \sqrt{p_\rho(\rho, S)}, 0\Big)^\top,~j=1,3, \qquad r_2(\rho, v_1, S)=(p_{\scriptscriptstyle S}, 0, -p_\rho)^\top,
\]
such that
\[
\nabla_{(\rho, v_1, S)}\lambda_j(\rho, v_1, S) \cdot r_j(\rho, v_1, S) \neq 0,~ j=1,3, \quad
{\rm and}\quad
\nabla_{(\rho, v_1, S)}\lambda_2(\rho, v_1, S) \cdot r_2(\rho, v_1, S) \equiv 0.
\]
Thus the two $j$-Riemann invariants $\Sigma_j^{(i)}(i=1,2, j=1,3)$ can be defined by
\begin{equation} \label{RI}
\Sigma_j^{(1)} = v_1 + (-1)^{\frac{j-1}{2}}\int^{\rho}\frac{\sqrt{p_z(z, S)}}{z}dz, \qquad
\Sigma_j^{(2)} = S,
\end{equation}
such that
\[
\nabla_{(\rho, v_1, S)} \Sigma_j^{(i)}(\rho, v_1, S) \cdot r_j(\rho, v_1, S) \equiv 0, \quad i=1,2,~ j=1,3.
\]
Here we only consider the single 3-rarefaction wave without loss of generality, while the single 1-rarefaction wave and the superposition of these two   waves could be treated similarly. Along the 3-rarefaction wave curve, the 3-Riemann invariant $\Sigma_3^{(i)}(i=1,2)$ keeps constant and the third eigenvalue $\lambda_3(\rho, v_1, S)$ is expanding, in particular,
\begin{equation} \label{RC}
\Sigma_3^{(i)}(\rho_+, v_{1+}, S_+) = \Sigma_3^{(i)}(\rho_-, v_{1-}, S_-), \qquad \lambda_3(\rho_+, v_{1+}, S_+) > \lambda_3(\rho_-, v_{1-}, S_-).
\end{equation}
The Riemann problem \eqref{ES}-\eqref{ESI} has a self-similar 3-rarefaction wave solution $(\rho^r, v_1^r, \theta^r)(\frac{x_1}{t})$ (cf. \cite{L}).
Corespondingly, the planar rarefaction wave solution to the 3D Euler equations \eqref{2ES}-\eqref{2ESI} could be defined as $(\rho^r, \bv^r, \theta^r)(\frac{x_1}{t})$ with $\bv^r := (v_1^r, 0, 0)^\top$.

Now our main result can be stated as follows.

\begin{theorem} \label{theorem1}
	Let $(\rho^r, \bv^r, \theta^r)(\frac{x_1}{t})$ be the planar 3-rarefaction wave to the 3D compressible Euler system \eqref{2ES} and  $T>0$ be any fixed  
	time. Then there exists a   constant $\varepsilon_0>0$ such that, for any $\varepsilon \in (0, \varepsilon_0)$,
	the system \eqref{NS} has a family of smooth solutions $(\rho^\varepsilon, \bv^\varepsilon, \theta^\varepsilon)(t, x)$ up to time $T$
	satisfying
	\[
	\begin{cases}
	(\rho^\varepsilon - \rho^r, \bv^\varepsilon - \bv^r, \theta^\varepsilon - \theta^r) \in C^0(0, T; L^2(\Omega)), \\
	(\nabla\rho^\varepsilon, \nabla\bv^\varepsilon, \nabla\theta^\varepsilon) \in C^0(0, T; H^1(\Omega)), \\
	(\nabla^3\bv^\varepsilon, \nabla^3\theta^\varepsilon) \in L^2(0, T; L^2(\Omega));
	\end{cases}
	\]
and,  for any small constant $h>0$, there exists a constant $C_{h,T}>0$ independent of $\varepsilon$, such that
	\begin{equation} \label{AB}
	\sup_{h \leq t \leq T} \left\|(\rho^\varepsilon, \bv^\varepsilon, \theta^\varepsilon)(t, x) - (\rho^r, \bv^r, \theta^r)\left(\frac{x_1}{t}\right)\right\|_{L^\infty(\Omega)} \leq C_{h,T}\ \varepsilon^{\frac{1}{6}}|\ln \varepsilon|^2.
	\end{equation}
Furthermore, as   $\varepsilon \to 0+$, the solution $(\rho^\varepsilon, \bv^\varepsilon, \theta^\varepsilon)(t, x)$ converges to the planar 3-rarefaction wave fan $(\rho^r, \bv^r, \theta^r)(\frac{x_1}{t})$ pointwise in $\bbr^+\times\Omega$. 

\end{theorem}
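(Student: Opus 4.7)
The plan is a continuity argument around a carefully built approximate solution. The approximate profile consists of two pieces: a smoothed planar $3$-rarefaction wave $(\bar\rho,\bar v_1,\bar\theta)(t,x_1)$, extended trivially to three dimensions by $\bar v_2=\bar v_3\equiv 0$ and obtained by replacing the Riemann data in \eqref{ES}--\eqref{ESI} with smoothed Burgers data for the $3$-Riemann invariant, plus a hyperbolic wave correction $(\rho_h,v_{1h},v_{2h},v_{3h},\theta_h)(t,x_1)$ designed so that its linearized hyperbolic dynamics balances the residual $\varepsilon$-dissipation produced when $(\bar\rho,\bar v_1,\bar\theta)$ is substituted into the full Navier--Stokes--Fourier system, in the spirit of \cite{HWY-2,LWW2}. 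Without this correction the $H^2$-norm of the perturbation cannot be controlled uniformly in $\varepsilon$, as already noted in the abstract.

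Setting the perturbation $(\phi,\psi,\zeta):=(\rho^\varepsilon-\bar\rho-\rho_h,\,\bv^\varepsilon-(\bar v_1+v_{1h},v_{2h},v_{3h}),\,\theta^\varepsilon-\bar\theta-\theta_h)$ and invoking local well-posedness, the goal is to extend the smooth solution up to the fixed time $T$ with bounds independent of $\varepsilon$. Because the analysis is carried out in the original non-scaled variables $(t,x)$, the viscous dissipation appears only at order $\varepsilon$, so the a priori assumption must be tight: I would impose a bound of the schematic form
\begin{equation*}
\sup_{t\in[0,T]}\|(\phi,\psi,\zeta)(t)\|_{H^2(\Omega)}^2 + \varepsilon\int_0^T\|\nabla(\psi,\zeta)(t)\|_{H^2(\Omega)}^2\,dt \;\leq\; \chi(\varepsilon),
\end{equation*}
with $\chi(\varepsilon)$ a small negative power of $\varepsilon$ chosen so that the nonlinear and cross interactions between the rarefaction derivatives, the hyperbolic wave, and the perturbation can be absorbed into the $\varepsilon$-weighted dissipation. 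This is precisely the more accurate a priori assumption referred to as \eqref{PA} in the introduction.

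The energy estimates proceed in three layers. At the $L^2$ level, convexity of the physical entropy $\rho\eta(\rho,\theta)$ yields a relative-entropy inequality whose production term contains the crucial good factor $\partial_{x_1}\bar v_1\cdot(\phi^2+|\psi|^2+\zeta^2)$ from the rarefaction expansion, together with the $\varepsilon$-order dissipation of $(\psi,\zeta)$; the periodicity of $\Omega$ in $x_2,x_3$ and the use of non-scaled variables are essential for absorbing the transverse zero modes. At the $H^1$ and $H^2$ levels, the dangerous terms are cross products such as $\partial_{x_1}\bar v_1\cdot\partial^\alpha\psi\cdot\partial^\alpha\psi$ and, more seriously, mixed terms in which derivatives fall on the hyperbolic wave (which carries negative powers of $\varepsilon$). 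Here the new cancellations of the physical flux and viscous structures, highlighted in the introduction, are exploited: symmetrizing the linearized Euler flux around the rarefaction profile and integrating by parts produce a perfect-divergence structure for the worst contributions, so that the remainders can be closed against the dissipation.

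The hardest step will be the $H^2$ one. In the non-scaled variables the dissipation $\varepsilon\|\nabla^3(\psi,\zeta)\|_{L^2}^2$ must compete with flux-type source terms whose size is dictated exactly by the sharpness of $\chi(\varepsilon)$ and by the behavior near $t=0$ of derivatives of the rarefaction and of the hyperbolic wave. Matching these constraints pins down the admissible value of $\chi$; the rate is thus forced by the nonlinear fluxes in the original variables rather than by error terms, as emphasized in the abstract. Finally, Sobolev embedding $H^2(\Omega)\hookrightarrow L^\infty(\Omega)$ together with the standard $L^\infty$ comparison between the smoothed and the true rarefaction fans (which contributes the logarithmic factor) converts the $H^2$ bound into the uniform rate $\varepsilon^{1/6}|\ln\varepsilon|^2$ in \eqref{AB}, and pointwise convergence on $\bbr^+\times\Omega$ as $\varepsilon\to 0^+$ follows by letting $h\to 0$.
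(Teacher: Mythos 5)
Your proposal follows the paper's architecture: a smoothed Burgers-based rarefaction profile plus a hyperbolic wave built to absorb the residual $\varepsilon$-dissipation, a perturbation energy argument at $L^2$, $H^1$, $H^2$ levels closed under an a priori smallness assumption, and Sobolev embedding combined with comparison to the exact fan to produce \eqref{AB}. In broad strokes this is the paper's proof, and the identification of the hyperbolic wave's role and of the relative-entropy good term $\bar v_{1x_1}\big(\varphi^2+|\Psi|^2+\xi^2\big)$ is correct.

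However, there is one point at which your schematic would genuinely fail to close. You impose a single-rate a priori bound $\|(\phi,\psi,\zeta)\|_{H^2}^2\le\chi(\varepsilon)$, but the paper's assumption \eqref{PA} is deliberately tiered: $\|\nabla(\varphi,\Psi,\xi)\|\le\varepsilon^{a_1}|\ln\varepsilon|^{-1}$ with $a_1=3/4$ and $\|\nabla^2(\varphi,\Psi,\xi)\|\le\varepsilon^{a_2}|\ln\varepsilon|^{-1}$ with $a_2=1/4$. These two exponents cannot be equalized. In the first-order estimate, the trilinear flux term (the analogue of $\frac{R\theta}{\rho}\varphi_{x_i}\nabla\varphi\cdot\nabla\psi_i$ treated in \eqref{15}) produces, after absorbing into $\varepsilon\|\nabla\Psi\|_1^2$, a factor $\varepsilon^{-3/5+\frac25 a_1+\frac65 a_2}$ multiplying $\sup\|\nabla\varphi\|^2$; closing requires $-\tfrac35+\tfrac25 a_1+\tfrac65 a_2\ge 0$. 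Independently, the second-order estimate forces $a_2\le\tfrac14$ (from matching the $H^2$ output $\varepsilon^{1/2}|\ln\varepsilon|^{-9}$) and also $a_2\ge\tfrac14$ (from the $\varepsilon^{\frac43 a_2-\frac13}$ factor in \eqref{25}), pinning $a_2=\tfrac14$. Substituting gives $a_1\ge\tfrac34$, so a uniform exponent $a_1=a_2$ is impossible: with $a_1=a_2=\tfrac14$ the crucial $H^1$ term blows up as $\varepsilon^{-1/5}$. Any attempt to carry out the argument with a single rate $\chi(\varepsilon)$ will therefore not close. The same optimization is what pins the Burgers smoothing width $\delta=\varepsilon^{1/6}|\ln\varepsilon|$, which your write-up leaves implicit even though the final rate $\varepsilon^{1/6}|\ln\varepsilon|^2$ comes directly from Lemma \ref{lemma2.2}(iii) at this $\delta$.

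A smaller inaccuracy: you attribute the key derivative-level cancellations to "physical flux and viscous structures." In the non-scaled 3D variables the flux and viscous terms are \emph{not} of the same $\varepsilon$-order, so the 2D flux–viscosity pairing of \cite{LWW2} is unavailable; the paper instead pairs $-\triangle\Psi\cdot\eqref{REF}_2$, $\frac{R\theta}{\rho}\nabla\varphi\cdot\nabla\eqref{REF}_1$, and $-\tfrac{1}{\theta}\triangle\xi\cdot\eqref{REF}_3$ so that the flux terms $\pm R\theta\nabla\varphi\cdot\triangle\Psi$ and $\pm R\rho\nabla\xi\cdot\triangle\Psi$ cancel among themselves. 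That flux–flux cancellation, not a flux–viscosity one, is what makes the $H^1$ and $H^2$ steps work.
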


\begin{remark}
Theorem \ref{theorem1} presents rigorously the vanishing dissipation limit to the planar rarefaction wave with arbitrarily large strength for the three-dimensional compressible Navier-Stokes-Fourier system \eqref{NS}. We remark that the corresponding vanishing dissipation limits of the system \eqref{NS} for the planar shock or planar contact discontinuity case are still completely open to our knowledge.
\end{remark}
\begin{remark}
As in the two-dimensional case \cite{LWW2}, in order to justify the vanishing dissipation limit we need to introduce the hyperbolic wave to recover the physical dissipations of the   system \eqref{NS} for the inviscid approximate rarefaction wave profile. Otherwise, the $H^2$-norm of the   perturbation of the 3D   system  \eqref{NS} around the planar rarefaction wave is not uniform in $\varepsilon$.

\end{remark}

\begin{remark}
Note also that our vanishing dissipation limit to the single planar 3-rarefaction wave in Theorem \ref{theorem1} could be applied to the limit problem for the superposition of two planar rarefaction waves in the first and the third characteristic families for  \eqref{NS},  provided that the additional wave interaction estimates between  the two rarefaction waves are considered.
\end{remark}

The rest of the paper is organized as follows. The approximate rarefaction wave to the Euler system \eqref{ES} or \eqref{2ES} will be constructed and the hyperbolic wave will be introduced in Section 2.
Then in Section 3, the system for  the perturbation of the solution to the  system \eqref{NS} around the solution profile consisting of   the approximate rarefaction wave and the hyperbolic wave  will be reformulated and the Theorem \ref{theorem1} will be proved. In Section 4, the detailed {\it a priori} estimates for the perturbation system will be carried out by using the $L^2$ energy method.

The following notation will be used in this paper. Denote by $H^l(\Omega)(l \geq 0, l\in \mathbb{Z})$ the usual Sobolev space with the norm $\|\cdot\|_l$. Write $L^2(\Omega) := H^0(\Omega)$ with the simplified notation $\|\cdot\| := \|\cdot\|_0$.  We denote by  $C$   a generic positive constant  that does not depend on $\varepsilon, \delta$ or $T$, but may depend on $(\rho_\pm, v_{1\pm}, \theta_\pm)$; and  denote  by $C_T$  a positive constant that does not depend on $\varepsilon$ or $\delta$, but may depend on $T$.

\bigskip
%
%
\section{Construction of Solution Profile and Preliminaries}
\setcounter{equation}{0}



This section is devoted to the construction of the solution profile including the approximate rarefaction wave for the Euler system \eqref{ES} or \eqref{2ES} and then the hyperbolic wave. 

\subsection{Smooth approximate rarefaction wave}

We first construct a smooth approximate rarefaction wave through the Burgers' equation as in \cite{X-1, LWW2, H-L, H-L-W}.
If $B_- < B_+$, then the Riemann problem of the inviscid Burgers' equation
\begin{equation} \label{BE}
\begin{cases}
\displaystyle B_t + BB_{x_1} = 0, \\
\displaystyle B(0, x_1) = B_0^r(x_1) = \begin{cases}
B_-,  \quad x_1 < 0,\\
B_+,  \quad x_1 > 0,
\end{cases}
\end{cases}
\end{equation}
admits a self-similar rarefaction wave fan solution $B^r(t, x_1) = B^r(x_1/t)$ given explicitly by
\begin{equation} \label{BES}
B^r(t, x_1) = B^r(\frac {x_1}{t}) = \begin{cases}
B_- , \qquad x_1 < B_-t, \\
\frac{x_1}{t}, \qquad B_-t \leq x_1 \leq B_+t, \\
B_+ , \qquad x_1 > B_+t.
\end{cases}
\end{equation}
Set $B_\pm = \lambda_3(\rho_\pm, v_{1\pm}, \theta_\pm)$ and then the self-similar 3-rarefaction wave $(\rho^r, v_1^r, \theta^r)(t, x_1) = (\rho^r, v_1^r, \theta^r)(\frac{x_1}t)$ to the Riemann problem \eqref{ES}-\eqref{ESI} can be given explicitly by
\begin{equation}\label{3r}
\begin{array}{ll}
  \lambda_3(\rho^r, v_1^r, \theta^r)(t, x_1) = B^r(t, x_1), \\
  \Sigma_3^{(i)}(\rho^r,v_1^r,\theta^r)(t, x_1) = \Sigma_3^{(i)}(\rho_\pm,v_{1\pm},\theta_\pm),\quad i=1,2,
\end{array}
\end{equation}
where $\Sigma_3^{(i)}~(i=1,2)$ are the 3-Riemann invariants defined in \eqref{RI}.

Since the self-similar rarefaction wave $B^r(t, x_1)$ in \eqref{BES} is only Lipschitz continuous, in order to justify the vanishing dissipation limit to the planar rarefaction wave of the compressible Navier-Stokes equations \eqref{NS} with second order derivatives, we need to construct an approximate smooth rarefaction wave profile by using the following Burgers' equation (c.f. \cite{X-1, H-L-W}):
\begin{equation} \label{ABE}
\begin{cases}
\displaystyle \bar B_t + \bar B \bar B_{x_1} = 0, \\
\displaystyle \bar B(0, x_1) = \bar B_0(x_1) = \frac{B_+ +B_-}{2} + \frac{B_+ - B_-}{2} \tanh \frac{x_1}{\delta},
\end{cases}
\end{equation}
where the approximate parameter $\delta>0$ is chosen by
\begin{align}\label{del}
	\delta := \varepsilon^b |\ln\varepsilon|,
\end{align}
with the power $b$ being a positive constant to be determined. In fact, we take $b=\frac{1}{6}$, i.e. $\delta=\varepsilon^{\frac 16}|\ln\varepsilon|,$ as explained later in \eqref{b} in order to obtain the sharp decay rate in the present paper. Since $\bar B_0^\prime(x_1)>0$, the Burgers' problem \eqref{ABE} has a unique, global classical solution $\bar B(t, x_1)$ satisfying $\bar B_{x_1}>0$ for all $t>0$ by the characteristic methods (cf. \cite{X-1, H-L-W}).



Similarly, the self-similar 3-rarefaction wave $(\rho^r, v_1^r, \theta^r)(t, x_1) = (\rho^r, v_1^r, \theta^r)(\frac{x_1}t)$ defined in \eqref{3r} is Lipschitz continuous. Correspondingly, the smooth approximate rarefaction wave $(\bar{\rho}, \bar{v}_1, \bar{\theta})$ $(t, x_1)$ can be constructed by
\begin{align}
  \begin{aligned} \label{AR}
    &\lambda_3(\bar{\rho}, \bar{v}_1, \bar{\theta})(t, x_1) = \bar B(t, x_1), \\
    &\Sigma_3^{(i)}(\bar{\rho},\bar{v}_1,\bar{\theta})(t, x_1) = \Sigma_3^{(i)}(\rho_\pm, v_{1\pm}, \theta_\pm), \quad i=1,2,
  \end{aligned}
\end{align}
where $\bar B(t, x_1)$ is the classical solution to the Burgers' equation \eqref{ABE}.
It could be checked that the above approximate rarefaction wave $(\bar{\rho}, \bar{v}_1, \bar{\theta})(t,x_1)$ satisfies the one-dimensional compressible Euler system:
\begin{equation} \label{ANS}
  \begin{cases}
    \displaystyle \bar{\rho}_t + (\bar{\rho}\bar{v}_1)_{x_1} = 0, \\
    \displaystyle (\bar{\rho} \bar v_1)_t + (\bar{\rho} \bar v_1^2 + \bar{p})_{x_1} = 0, \\
    \displaystyle \frac{R}{\gamma-1}\left[(\bar\rho\bar\theta)_t+(\bar\rho \bar v_1\bar\theta)_{x_1}\right] + \bar p\bar v_{1x_1} = 0,  
  \end{cases}
\end{equation}
with the initial value $(\bar{\rho}_0, \bar{v}_{10}, \bar\theta_0)(x_1):=(\bar{\rho}, \bar v_1, \bar\theta)(0, x_1)$.
With the supplement $v_2^r = v_3^r \equiv 0$ to the one-dimensional 3-rarefaction wave $(\rho^r, v_1^r, \theta^r)(\frac{x_1}t)$,
we see that $(\rho^r, \bv^r, \theta^r)(\frac{x_1}t)$ is the unique planar rarefaction wave solution to the three-dimensional Riemann problem \eqref{2ES}-\eqref{2ESI}. Then the corresponding approximate rarefaction wave is $(\bar \rho, \bar{\bv},\bar\theta)$ with $(\bar \rho, \bar v_1,\bar\theta)$ defined in \eqref{AR} and $\bar v_2 = \bar v_3\equiv0$.

\

The next lemma follows directly from the properties of $\bar B(t, x_1)$ (c.f. \cite{H-L-W}).
\begin{lemma} \label{lemma2.2}
	The approximate smooth 3-rarefaction wave $(\bar{\rho}, \bar v_1, \bar\theta)$ constructed in \eqref{AR} satisfies the following properties:
	
	\item[(i)]$\bar{v}_{1x_1} = \frac{2}{\gamma + 1}\bar B_{x_1} > 0$, $\bar{\rho}_{x_1} = \frac{1}{\sqrt{R\gamma \rho_+^{1-\gamma}\theta_+}}\bar{\rho}^{\frac{3 - \gamma}{2}} \bar{v}_{1x_1}>0$ and $\bar{\theta}_{x_1} = \frac{\gamma - 1}{\sqrt{R\gamma}}\sqrt{\bar{\theta}}\bar{v}_{1x_1} > 0$ for all $x_1 \in \bbr$ and $t \geq 0$.
	
	\item[(ii)] For all $t \geq 0, \delta > 0$ and $p \in [1, + \infty]$, the following estimates hold:
	\begin{align*}
	&\|(\bar{\rho}_{x_1}, \bar{v}_{1x_1}, \bar\theta_{x_1})\|_{L^p(\mathbb{R})} \leq C  (\delta + t)^{-1+1/p}, \\
	&\|(\bar{\rho}_{x_1x_1}, \bar{v}_{1x_1x_1}, \bar\theta_{x_1x_1})\|_{L^p(\mathbb{R})} \leq C (\delta + t)^{-1} \delta^{-1+1/p}, \\
	&\|(\bar{\rho}_{x_1x_1x_1}, \bar{v}_{1x_1x_1x_1}, \bar\theta_{x_1x_1x_1})\|_{L^p(\mathbb{R})} \leq C (\delta + t)^{-1} \delta^{-2+1/p}.
	\end{align*}
	
	\item[(iii)]There exists a constant $\delta_0 \in (0, 1)$ and a uniform constant $C$ such that for all $\delta \in (0, \delta_0]$ and $t > 0$,
	\[
	\left\|(\bar{\rho}, \bar v_1, \bar\theta)(t, \cdot) - (\rho^r, v_1^r, \theta^r)\left(\frac{\cdot}{t}\right)\right\|_{L^\infty(\mathbb{R})} \leq C \delta \frac{ \left[\ln(1 + t) + |\ln \delta|\right]}{t}.
	\]
\end{lemma}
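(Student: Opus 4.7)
The plan is to reduce all three claims to the corresponding classical properties of the Burgers approximation $\bar B(t,x_1)$ (see \cite{X-1,H-L-W}) via the explicit Riemann-invariant parametrization \eqref{AR}. For part (i), along the 3-rarefaction curve both invariants $\Sigma_3^{(1)}$ and $\Sigma_3^{(2)}$ are frozen, so $\bar S(t,x_1)\equiv S_\pm$ and $\bar v_1-\frac{2}{\gamma-1}\bar c=\text{const}$ where $\bar c:=\sqrt{p_{\bar\rho}(\bar\rho,\bar S)}$. Combined with $\lambda_3(\bar\rho,\bar v_1,\bar\theta)=\bar v_1+\bar c=\bar B$, differentiation in $x_1$ gives $\bar c_{x_1}=\frac{\gamma-1}{2}\bar v_{1x_1}$ and $\bar v_{1x_1}+\bar c_{x_1}=\bar B_{x_1}$, which together produce $\bar v_{1x_1}=\frac{2}{\gamma+1}\bar B_{x_1}$. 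Since $\bar S$ is constant along the wave, the polytropic law yields $\bar\theta=\frac{A}{R}\exp\!\bigl(\frac{\gamma-1}{R}S_\pm\bigr)\bar\rho^{\gamma-1}$; differentiating this identity and using both $\bar c^2=\gamma R\bar\theta$ and the relation for $\bar c_{x_1}$ produces the stated formulas for $\bar\rho_{x_1}$ and $\bar\theta_{x_1}$ in terms of $\bar v_{1x_1}$. The positivity claims reduce to $\bar B_{x_1}>0$, which is preserved by the Burgers flow because $\bar B_0'>0$.

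For part (ii), with the identities from (i) in hand the derivatives $\bar v_{1x_1},\bar\rho_{x_1},\bar\theta_{x_1}$ are smooth, uniformly bounded multiples of $\bar B_{x_1}$ along the compact rarefaction curve determined by $(\rho_\pm,v_{1\pm},\theta_\pm)$, and higher $x_1$-derivatives are polynomial combinations of the lower ones with derivatives of $\bar B$ of the same order. Thus the $L^p$-bounds follow from the classical characteristic computation for the Burgers rarefaction: writing $\bar B(t,x_1)=\bar B_0(y)$ with $x_1=y+t\bar B_0(y)$ gives
\begin{equation*}
\bar B_{x_1}=\frac{\bar B_0'(y)}{1+t\bar B_0'(y)},
\end{equation*}
and the fact that $\|\bar B_0'\|_{L^\infty}\sim\delta^{-1}$ with $\bar B_0'$ concentrated on a scale $\delta$ yields $\|\bar B_{x_1}\|_{L^p}\lesssim(\delta+t)^{-1+1/p}$ after splitting into the regimes where $1+t\bar B_0'(y)$ is comparable to $t/\delta$ or to $1$. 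The second- and third-order bounds are obtained by differentiating this expression once and twice more and repeating the same splitting.

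For part (iii), the same characteristic representation gives $\|\bar B(t,\cdot)-B^r(\cdot/t)\|_{L^\infty}\leq C\delta[\ln(1+t)+|\ln\delta|]/t$ by comparing $\bar B$ with $B^r$ on the three regions $\{x_1<B_-t\}$, $\{B_-t\leq x_1\leq B_+t\}$, $\{x_1>B_+t\}$ separately: outside the fan the difference decays like the tanh tails, while inside one must match the self-similar profile $x_1/t$ to $\bar B_0(y)$ through $x_1=y+t\bar B_0(y)$, which after inversion yields the logarithmic factor (cf.\ \cite{H-L-W}). The transfer to $(\bar\rho,\bar v_1,\bar\theta)$ is then immediate from the smoothness and boundedness of the Riemann-invariant parametrization recorded in (i).

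The main technical obstacle is the sharp logarithmic bookkeeping in (iii), since the inversion of $x_1=y+t\bar B_0(y)$ near the endpoints $x_1=B_\pm t$ of the fan is where both the $\ln(1+t)$ and the $|\ln\delta|$ appear; outside the fan the decay is essentially exponential in $1/\delta$, so the final rate is controlled by this matching. All three pieces are however entirely classical for the scalar Burgers equation, so the work here consists mainly of verifying that the parametrization from (i) preserves the estimates uniformly over $\delta\in(0,\delta_0]$ and over the compact wave curve fixed by the prescribed end states.
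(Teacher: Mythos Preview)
Your proposal is correct and follows exactly the approach the paper indicates: the paper does not give a proof of Lemma~\ref{lemma2.2} at all, but simply states that ``the next lemma follows directly from the properties of $\bar B(t, x_1)$ (c.f.\ \cite{H-L-W}),'' and your argument spells out precisely this reduction via the Riemann-invariant parametrization \eqref{AR}. Your computations in (i) are correct, and your sketches for (ii) and (iii) accurately summarize the classical Burgers estimates from \cite{X-1,H-L-W} together with the observation that the map $\bar B\mapsto(\bar\rho,\bar v_1,\bar\theta)$ is smooth with uniformly bounded derivatives on the compact rarefaction curve.
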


\begin{remark}\label{remark-rare} In order to justify the vanishing dissipation limit to the planar rarefaction wave of 3D Navier-Stokes-Fourier system \eqref{NS} in Theorem \ref{theorem1}, if we only use the smooth rarefaction wave $(\bar{\rho}, \bar{v}_1, \bar\theta)(t, x_1)$ defined in \eqref{AR} as the approximate wave profile, then the error 
arising from the viscous dissipation terms to the Navier-Stokes-Fourier system \eqref{NS} around the inviscid approximate rarefaction wave  
makes it difficult to obtain the desired uniform estimates with respect to the dissipation coefficients.
\end{remark}

\subsection{Hyperbolic wave}

In order to overcome the difficulties mentioned in Remark \ref{remark-rare}, as in two-dimensional case \cite{LWW2}, a new wave, called hyperbolic wave, need to be introduced to recover the physical dissipations for the inviscid rarefaction wave profile, which is crucial for our justification of vanishing dissipation limit for the 3D Navier-Stokes-Fourier system \eqref{NS}. However, compared with the two-dimensional isentropic case in \cite{LWW2}, the hyperbolic wave here has one more linearly degenerate eigenvalue in the second characteristic field, besides the two genuinely nonlinear characteristic fields. 
We now present a detailed construction of this hyperbolic wave. Denote the hyperbolic wave by $\textbf{z}:=(z_1,z_2,z_3)^\top(t, x_1)$ satisfying the following linearized hyperbolic system around the smooth rarefaction wave $(\bar{\rho}, \bar{v}_1, \bar\theta)(t, x_1)$ with the viscous dissipation terms as the source:
\begin{equation} \label{HW}
\left\{
\begin{array}{ll}
\textbf{z}_t +
\left(
\bar{\textbf{A}}
\textbf{z}
\right)_{x_1} =
\left[
\begin{array}{c}
0\\
\displaystyle (2\mu + \lam)\varepsilon \bar{v}_{1x_1x_1}\\
\displaystyle \kappa\varepsilon\bar{\theta}_{x_1x_1} + (2\mu + \lambda)\varepsilon(\bar v_1\bar{v}_{1x_1})_{x_1}
\end{array}
\right],\\[6mm]
\textbf{z}(0,x_1)=(z_1, z_2, z_3)(0, x_1) := (0, 0, 0),
\end{array}
\right.
\end{equation}
where the Jacobian matrix
\begin{equation}\label{Jac-A}
\bar{\textbf{A}} =
\left[
\begin{array}{ccc}
0 & 1 & 0\\
-\frac{\bar{m}_1^2}{\bar{\rho}^2} + \bar{p}_{\bar{\rho}} & \frac{2\bar{m}_1}{\bar{\rho}} + \bar{p}_{\bar{m}_1} & \bar{p}_{\bar{\mathcal{E}}}\\
-\frac{\bar{m}_1\bar{\mathcal{E}}}{\bar{\rho}^2} + \frac{\bar{m}_1}{\bar{\rho}}\bar{p}_{\bar{\rho}} - \frac{\bar{p}\bar{m}_1}{\bar{\rho}^2} & \frac{\bar{\mathcal{E}}}{\bar{\rho}} + \frac{\bar{m}_1}{\bar{\rho}}\bar{p}_{\bar{m}_1} + \frac{\bar{p}}{\bar{\rho}} & \frac{\bar{m}_1}{\bar{\rho}} + \frac{\bar{m}_1}{\bar{\rho}}\bar{p}_{\bar{\mathcal{E}}}
\end{array}
\right],
\end{equation}
and $\bar{m}_1 := \bar{\rho}\bar v_1,~ \bar{\mathcal{E}} := \bar{\rho}\bar{E} = \bar{\rho}\left(\frac{R}{\gamma - 1}\bar{\theta} + \frac12\bar v_1^2\right)$ represents the momentum and the total energy associated with the approximate rarefaction wave, respectively. Note that the hyperbolic wave $\textbf{z}(t, x_1)$ is motivated but quite different from \cite{HWY-2}
since the initial values for the hyperbolic wave constructed here all start from $t=0$ including the initial layer while in \cite{HWY-2} the initial layer effect is essentially neglected to justify the vanishing viscosity limit of one-dimensional compressible Navier-Stokes equations with the superposition of both shock wave and rarefaction wave and therefore, the data for the hyperbolic wave in \cite{HWY-2} is only starting from $t=h$ with $h>0$ or from $t=T$.

In order to solve this linear hyperbolic system \eqref{HW} in the fixed time interval $[0, T]$, we first diagonalize the above hyperbolic system \eqref{HW}. Direct calculations show that the Jacobian matrix $\bar{\textbf{A}}$ in \eqref{Jac-A} has three distinct eigenvalues 
$$\bar{\lam}_j = \bar{\lam}_j(\bar{\rho}, \bar v_1, S_\pm) = \bar{v}_1 + (-1)^{\frac{j+1}{2}}\sqrt{\bar{p}_{\bar{\rho}}(\bar{\rho}, S_\pm)} \ (j=1,3), 
\quad \bar{\lam}_2 = \bar{\lam}_2(\bar{\rho}, \bar v_1, S_\pm) = \bar v_1$$
 with the corresponding left and right eigenvectors $\bar{l}_j = \bar{l}_j(\bar{\rho}, \bar v_1, S_\pm)$,  
$\bar{r}_j = \bar{r}_j(\bar{\rho}, \bar v_1, S_\pm)\  (j = 1, 2, 3)$ satisfying
$
\bar{\textbf{L}}\bar{\textbf{A}}\bar{\textbf{R}} = \text{diag}(\bar{\lam}_1, \bar{\lam}_2, \bar{\lambda}_3) := \bar{\mathbf{\Lambda}}, \  \bar{\textbf{L}}\bar{\textbf{R}} = \mathbf{I}, 
$
with $\bar{\textbf{L}} := (\bar{l}_1, \bar{l}_2, \bar{l}_3)^\top, \bar{\textbf{R}} := (\bar{r}_1, \bar{r}_2, \bar{r}_3)$,  and $\mathbf{I}$ as the $3 \times 3$ identity matrix. It should be emphasized that the second characteristic field of $\bar{\textbf{A}}$ is linearly degenerate and the other two characteristic fields are genuinely nonlinear. Denote $\textbf{Z}:=(Z_1, Z_2, Z_3)^\top $ and set
$
\textbf{Z} := \bar{\textbf{L}}\textbf{z},
$
then $\textbf{z}= \bar{\textbf{R}}\textbf{Z}$, 
and $\textbf{Z}$ satisfies the diagonalized system
\begin{equation} \label{DHW}
\textbf{Z}_t +
\left(
\bar{\mathbf{\Lambda}}
\textbf{Z}
\right)_{x_1} = \bar{\textbf{L}}
\left[
\begin{array}{c}
0\\
\displaystyle (2\mu + \lam)\varepsilon \bar{v}_{1x_1x_1}\\
\displaystyle \kappa\varepsilon\bar{\theta}_{x_1x_1} + (2\mu + \lambda)\varepsilon(\bar v_1\bar{v}_{1x_1})_{x_1}
\end{array}
\right] + (\bar{\textbf{L}}_t\bar{\textbf{R}}+\bar{\textbf{L}}_{x_1}\bar{\textbf{A}}\bar{\textbf{R}})
\textbf{Z}, 
\end{equation}
with the initial data $\textbf{Z}(0, x_1) := (0, 0, 0)^\top$. Along the 3-rarefaction wave curve, it holds that the 3-Riemann invariant is constant, that is,
\begin{equation} \label{sc}
\bar{\textbf{L}}_t = -\bar{\lam}_3 \bar{\textbf{L}}_{x_1}.
\end{equation}
Note that the structure condition \eqref{sc} is crucially used to solve the linear hyperbolic system \eqref{DHW} in the finite time interval $[0,T]$. Otherwise, it does not seem  obvious how  to solve this strongly coupled hyperbolic system \eqref{DHW} by the classical characteristic method.
Applying the structure relation \eqref{sc} to $\eqref{DHW}$, the diagonalized hyperbolic system $\eqref{DHW}$ can be written as
\begin{equation} \label{DHWS}
\begin{array}{ll}
\textbf{Z}_t +
\left(
\bar{\mathbf{\Lambda}}
\textbf{Z}
\right)_{x_1} = \bar{\textbf{L}}
\left[
\begin{array}{c}
0\\
\displaystyle (2\mu + \lam)\varepsilon \bar{v}_{1x_1x_1}\\
\displaystyle \kappa\varepsilon\bar{\theta}_{x_1x_1} + (2\mu + \lambda)\varepsilon(\bar v_1\bar{v}_{1x_1})_{x_1}
\end{array}
\right] \\[7mm]
\qquad\qquad\qquad\quad +
\left[
\begin{array}{c}
\displaystyle  \bar{l}_{1x_1}\cdot\bar{r}_{1} \\
\displaystyle \bar{l}_{2x_1}\cdot\bar{r}_{1}\\
\displaystyle  \bar{l}_{3x_1}\cdot\bar{r}_{1}
\end{array}
\right] (\bar{\lambda}_1 - \bar{\lambda}_3)Z_1
+
\left[
\begin{array}{c}
\displaystyle  \bar{l}_{1x_1}\cdot\bar{r}_{2}\\
\displaystyle  \bar{l}_{2x_1}\cdot\bar{r}_{2}\\
\displaystyle  \bar{l}_{3x_1}\cdot\bar{r}_{2}
\end{array}
\right](\bar{\lambda}_2 - \bar{\lambda}_3)Z_2,
\end{array}
\end{equation}
where the equations of $Z_1, Z_2$ are decoupled from  $Z_3$ due to   \eqref{sc},
which is quite important to solve the linear hyperbolic system on the bounded interval $[0,T]$. In fact, we can first solve the hyperbolic equations of $Z_1, Z_2$ and then the equation of $Z_3$ by the classical characteristic method due to this decoupling. Moreover, we have the following key estimates for the hyperbolic wave $\textbf{z}$ or $\textbf{Z}$:
\begin{lemma} \label{lemma2.3}
	There exists a positive constant $C_T$ independent of $\delta$ and $\varepsilon$, such that
	\[
	\left\|\frac{\partial^k}{\partial x_1^k}(\textbf{Z},\textbf{z})(t, \cdot)\right\|_{L^2(\mathbb{R})}^2 \leq C_T \left(\frac{\varepsilon}{\delta^{k + 1}}\right)^2,  \quad k=0,1,2,3.
	\]
Moreover,  one has
	$$
	\sup_{t\in[0,T]}\left\|\frac{\partial^k}{\partial x_1^k}(\textbf{Z},\textbf{z})(t, \cdot)\right\|_{L^\infty(\mathbb{R})}
	=O(1)\left(\frac{\varepsilon}{\delta^{\frac 32+k}}\right),\quad k=0,1,2.
	$$	
\end{lemma}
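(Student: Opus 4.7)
\smallskip
\noindent\textbf{Proof proposal.} My plan is to obtain the $L^2$ bounds by a direct linear energy argument on the diagonalized system \eqref{DHWS}, exploiting the decoupling that the structure condition \eqref{sc} produces, and then deduce the $L^\infty$ bounds by one-dimensional Sobolev interpolation in $x_1$. First I would observe that, because of \eqref{sc}, in \eqref{DHWS} the coefficients $\bar{l}_{jx_1}\!\cdot\!\bar{r}_1,\ \bar{l}_{jx_1}\!\cdot\!\bar{r}_2$ multiplying $Z_1,Z_2$ are smooth functions of $(\bar\rho,\bar v_1,\bar\theta)$ whose first $x_1$-derivatives are bounded in terms of $\|\bar v_{1x_1}\|_{L^\infty}\lesssim 1$, and that the $(1,1)$, $(2,2)$ blocks of the system are decoupled from $Z_3$. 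Hence one may solve the $(Z_1,Z_2)$ subsystem first by the classical linear hyperbolic theory on $[0,T]\times\mathbb R$ (characteristics or Friedrichs-type $L^2$ estimate), and then use $Z_1,Z_2$ as source in the transport equation for $Z_3$.

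For the $L^2$ estimate with $k=0$ I would multiply the $j$-th equation of \eqref{DHWS} by $Z_j$, integrate over $\mathbb R$, use $(\bar\lambda_j)_{x_1}\!\le\! C\bar v_{1x_1}\!\in\! L^\infty$ and the boundedness of the lower-order coupling matrix to obtain
\begin{equation*}
\tfrac{d}{dt}\|\mathbf Z(t)\|^2 \le C\,\|\mathbf Z(t)\|^2
+ C\varepsilon^2\bigl(\|\bar v_{1x_1x_1}(t)\|^2+\|\bar\theta_{x_1x_1}(t)\|^2+\|(\bar v_1\bar v_{1x_1})_{x_1}(t)\|^2\bigr).
\end{equation*}
Lemma \ref{lemma2.2}(ii) gives $\|\bar v_{1x_1x_1}(t)\|_{L^2}^2\le C(\delta+t)^{-2}\delta^{-1}$, etc.; integrating in time yields $\int_0^T(\delta+t)^{-2}dt\le C\delta^{-1}$ and Gronwall immediately produces $\|\mathbf Z(t)\|^2\le C_T(\varepsilon/\delta)^2$. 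The derivative estimates $k=1,2,3$ proceed by induction: differentiating \eqref{DHWS} in $x_1$ $k$ times, the resulting equation for $\partial_{x_1}^k\mathbf Z$ has the same principal part, a lower-order part involving $\partial_{x_1}^j\mathbf Z$ ($j\le k-1$) already controlled at the previous step, and a forcing term whose $L^2$ norm in time is bounded by $C\varepsilon\delta^{-(k+1)}$ using the extensions of Lemma \ref{lemma2.2}(ii) to $(k+2)$-th derivatives of the approximate rarefaction wave (which follow by the same Burgers argument). Gronwall then delivers the claimed $C_T(\varepsilon/\delta^{k+1})^2$ bound, and the analogous estimate transfers to $\mathbf z=\bar{\mathbf R}\mathbf Z$ because $\bar{\mathbf R}$ and its derivatives are uniformly bounded on $[0,T]\times\mathbb R$.

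The $L^\infty$ bounds follow from the one-dimensional Sobolev inequality $\|f\|_{L^\infty(\mathbb R)}\le\sqrt2\,\|f\|_{L^2(\mathbb R)}^{1/2}\|f_{x_1}\|_{L^2(\mathbb R)}^{1/2}$ applied to $f=\partial_{x_1}^k(\mathbf Z,\mathbf z)$, giving
\begin{equation*}
\|\partial_{x_1}^k(\mathbf Z,\mathbf z)\|_{L^\infty}\le C\bigl(\varepsilon/\delta^{k+1}\bigr)^{1/2}\bigl(\varepsilon/\delta^{k+2}\bigr)^{1/2}=C\,\varepsilon/\delta^{k+3/2},\qquad k=0,1,2.
\end{equation*}
The main technical obstacle I anticipate is the initial-time singularity of the source: the weights $(\delta+t)^{-1}$ in Lemma \ref{lemma2.2}(ii) are borderline non-integrable, and only the $t$-integrability of their squares (which gives a $\delta^{-1}$, not logarithmic, loss) allows the powers of $\delta$ to line up as claimed; this is precisely why the data for $\mathbf z$ is zero at $t=0$ (so that the initial layer is absorbed into $\mathbf z$ rather than into the perturbation) and why the diagonalization together with the decoupling \eqref{sc} is necessary to avoid a spurious coupling of $Z_3$ back into $(Z_1,Z_2)$ that would otherwise require a priori control of $Z_3$ before the forcing analysis is finished.
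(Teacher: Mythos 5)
Your $L^2$ argument breaks down at the Gronwall step because you have mis-sized the zeroth-order coefficients. You write that the coupling coefficients $\bar l_{jx_1}\cdot\bar r_1$, $\bar l_{jx_1}\cdot\bar r_2$ in \eqref{DHWS} are bounded because ``$\|\bar v_{1x_1}\|_{L^\infty}\lesssim 1$'', but Lemma~\ref{lemma2.2}(ii) gives $\|\bar v_{1x_1}(t)\|_{L^\infty(\mathbb R)}\leq C(\delta+t)^{-1}$, which is of size $\delta^{-1}$ near $t=0$; and since $\bar l_{jx_1}$, $\bar\lambda_{jx_1}$ are obtained from $\bar l_j$, $\bar\lambda_j$ by the chain rule in $(\bar\rho,\bar v_1)$ (with $\bar\rho_{x_1}, \bar\theta_{x_1}=O(\bar v_{1x_1})$), the full zeroth-order matrix carries the same $(\delta+t)^{-1}$ scaling. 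Your claimed inequality $\tfrac{d}{dt}\|\mathbf Z\|^2\le C\|\mathbf Z\|^2+C\varepsilon^2(\cdots)$ is therefore incorrect; the honest one is $\tfrac{d}{dt}\|\mathbf Z\|^2\le C(\delta+t)^{-1}\|\mathbf Z\|^2+C\varepsilon^2(\cdots)$, and Gronwall then costs a factor $\exp\bigl(C\int_0^T(\delta+s)^{-1}ds\bigr)=\bigl((\delta+T)/\delta\bigr)^{C}$, an extra power $\delta^{-C}$ that destroys the uniform bound $C_T(\varepsilon/\delta)^{2}$. Nor can you rely on a favourable sign from $\bar\lambda_{1x_1}$: for the $3$-rarefaction, $\bar\lambda_1=\bar v_1-\sqrt{\bar p_{\bar\rho}}$, and the sound-speed derivative can dominate $\bar v_{1x_1}$, so $\bar\lambda_{1x_1}Z_1^2$ need not help.

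The paper repairs precisely this point by a \emph{weighted} energy estimate. For the genuinely nonlinear fields it tests $\eqref{DHWS}_{1}$ and $\eqref{DHWS}_{2}$ with $\bar\rho^{\Gamma}Z_1$ and $\bar\rho^{\Gamma}Z_2$ for a large constant $\Gamma$; using the rarefaction monotonicity $\bar\rho_{x_1}>0$, $\bar v_{1x_1}>0$, the commutator $-\tfrac{\Gamma}{2}\bar\rho^{\Gamma-1}(\bar\rho_t+\bar\lambda_j\bar\rho_{x_1})\geq\tfrac{\Gamma}{2}\bar\rho^{\Gamma}\bar v_{1x_1}$ appears on the left with a good sign, so that by taking $\Gamma$ large this dissipation term absorbs every $O(\bar v_{1x_1})(Z_1^2+Z_2^2)$ contribution coming from $\bar\lambda_{jx_1}$ and the coupling matrix, and only an $O(1)$ Gronwall coefficient survives. (For $Z_3$ the unweighted multiplier $Z_3$ already suffices, because $\bar\lambda_{3x_1}=\bar B_{x_1}>0$ supplies its own good term and, by the decoupling from \eqref{sc}, no $Z_3$ appears among its lower-order coefficients.) Your use of the decoupling, your bookkeeping of the source term and the derivative hierarchy, and your $1$D Sobolev interpolation for the $L^\infty$ bounds are all fine; the missing ingredient is the weight that renders the Gronwall constant $\delta$-independent.
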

\begin{proof} This lemma can be proved by using weighted energy estimates and the subtle structure of the diagonalized linear hyperbolic system \eqref{DHWS}. The estimates for $Z_1$ and $Z_3$ in the genuinely  nonlinear fields are similar to our previous work \cite{LWW2} for the isentropic case, thus we need to care more about the estimates of $Z_2$ in the second linearly degenerate characteristic field.

As in \cite{LWW2}, multiplying the equation $\eqref{DHWS}_3$ by $Z_3$ and integrating the resulting equation over $[0, t]\times\mathbb{R}$ with $t \in (0, T)$, we can infer
	\begin{align}
	\begin{aligned} \label{1}
	\int_{\bbr} Z_3^2 (t, x_1) dx_1 + \int_{0}^{t}\int_{\bbr} \bar{v}_{1x_1} Z_3^2 dx_1dt
	\leq C_T \left(\frac{\varepsilon}{\delta}\right)^2 + C_T \int_{0}^{t}\int_{\bbr} \bar{v}_{1x_1}(Z_1^2 + Z_2^2) dx_1dt.
	\end{aligned}
	\end{align}
Similarly, multiplying the equation $\eqref{DHWS}_1$ by $\bar{\rho}^\Gamma Z_1$ with $\Gamma$ being a large positive constant to be determined, and then integrating the resulting equation over $[0, t]\times\mathbb{R}$ with $t \in (0, T)$, we can obtain
	\begin{align}
	\begin{aligned} \label{2}
	&\int_{\bbr} \bar{\rho}^\Gamma Z_1^2(t, x_1) dx_1 + \int_{0}^{t}\int_{\bbr} \Gamma\bar{\rho}^\Gamma\bar{v}_{1x_1}Z_1^2 dx_1dt\\
	&\leq C_T \left(\frac{\varepsilon}{\delta}\right)^2 + C_T \int_{0}^{t}\int_{\bbr} \bar{\rho}^\Gamma\bar{v}_{1x_1}(Z_1^2 + Z_2^2) dx_1dt.
	\end{aligned}
	\end{align}
	Now we give some details for the estimation of $Z_2$. Multiplying the equation $\eqref{DHWS}_2$ by $\bar{\rho}^\Gamma Z_2$ 
	and then integrating the resulting equation over $[0, t]\times\mathbb{R}$ with $t \in (0, T)$, we can derive that
	\begin{align*}
	&\int_{\bbr} \bar{\rho}^\Gamma\frac{Z_2^2}{2}(t, x_1) dx_1 + \int_{0}^{t}\int_{\bbr} \Gamma\bar{\rho}^\Gamma\bar{v}_{1x_1} \frac{Z_2^2}{2} + \bar{\rho}^\Gamma\bar{\lambda}_{2x_1} \frac{Z_2^2}{2} dx_1dt \\
	&= \int_{0}^{t}\int_{\bbr} \Big[(2\mu + \lam)\varepsilon\bar{\rho}^\Gamma \bar{l}_{22}\bar{v}_{1x_1x_1}Z_2 + \bar{\rho}^\Gamma\bar{l}_{23}(\kappa\varepsilon\bar{\theta}_{x_1x_1} + (2\mu + \lambda)\varepsilon(\bar{v}_1\bar{v}_{1x_1})_{x_1})Z_2 \\
	&\quad + (\bar{l}_{2x_1}\cdot\bar{r}_{1}(\bar{\lambda}_1 - \bar{\lambda}_3)Z_1 + \bar{l}_{2x_1}\cdot\bar{r}_{2}(\bar{\lambda}_2 - \bar{\lambda}_3)Z_2)\bar{\rho}^\Gamma Z_2\Big] dx_1dt \\
	&\leq C \int_{0}^{t}\int_{\bbr} \bar{\rho}^\Gamma Z_2^2 dx_1dt + C \varepsilon^2 \int_{0}^{t}\int_{\bbr} (\bar{v}_{1x_1x_1}^2 + \bar{\theta}_{x_1x_1}^2 + \bar{v}_{1x_1}^4) dx_1dt \\
	&\quad + C \int_{0}^{t}\int_{\bbr} \bar{\rho}^\Gamma\bar{v}_{1x_1}(Z_1^2 + Z_2^2) dx_1dt \\
	&\leq C \int_{0}^{t}\int_{\bbr} \bar{\rho}^\Gamma Z_2^2 dx_1dt + C \left(\frac{\varepsilon}{\delta}\right)^2 + C \int_{0}^{t}\int_{\bbr} \bar{\rho}^\Gamma\bar{v}_{1x_1}(Z_1^2 + Z_2^2) dx_1dt.
	\end{align*}
	Using Gronwall's inequality gives
	\begin{align}
	\begin{aligned} \label{3}
	&\int_{\bbr} \bar{\rho}^\Gamma Z_2^2(t, x_1) dx_1 + \int_{0}^{t}\int_{\bbr}\Gamma\bar{\rho}^\Gamma\bar{v}_{1x_1}Z_2^2 dx_1dt\\
	&\leq C_T \left(\frac{\varepsilon}{\delta}\right)^2 + C_T \int_{0}^{t}\int_{\bbr} \bar{\rho}^\Gamma\bar{v}_{1x_1}(Z_1^2 + Z_2^2) dx_1dt.
	\end{aligned}
	\end{align}
Combining \eqref{1}, \eqref{2} and \eqref{3} together and choosing $\Gamma$ large enough, we can obtain
	\begin{align*}
	\int_{\bbr} |\textbf{Z}|^2(t, x_1) dx_1 + \int_{0}^{t}\int_{\bbr} \bar{v}_{1x_1}|\textbf{Z}|^2 dx_1dt
	\leq C_T \left(\frac{\varepsilon}{\delta}\right)^2.
	\end{align*}
Thus we have proved Lemma \ref{lemma2.3} for the case $k = 0$. The $k$-th order derivative estimates in Lemma \ref{lemma2.3} for $k = 1, 2, 3$ can be justified similarly after differentiating the system \eqref{DHWS} $k$ times with respect to $x_1$, and the details are  omitted. 
\end{proof}

\subsection{Construction of solution profile}
Define the approximate solution profile $(\tilde{\rho}, \tilde{v}_1, \tilde{\theta})$ of the full compressible Navier-Stokes-Fourier equations \eqref{NS} as
\begin{equation} \label{AW0}
\tilde{\rho} = \bar{\rho} + z_1, \quad \tilde{m}_1 = \bar{m}_1 + z_2 := \tilde{\rho}\tilde{v}_1, \quad \tilde{\mathcal{E}} = \bar{\mathcal{E}} + z_3 := \tilde{\rho}\tilde{E} = \tilde{\rho}\left(\frac{R}{\gamma - 1}\tilde{\theta} + \frac12\tilde{v}_1^2\right),
\end{equation}
where $(\bar\rho, \bar m:=\bar\rho\bar v_1, \bar{\mathcal{E}}:=\bar{\rho}(\frac{R}{\gamma - 1}\bar{\theta} + \frac12\bar{v}_1^2))$ is the planar rarefaction wave in \eqref{AR} and $(z_1, z_2, z_3)$ is the hyperbolic wave in \eqref{HW}.
Correspondingly, it holds that
\begin{equation}\label{ue}
\begin{array}{ll}
\di\tilde{v}_1=\bar v_1+\frac{1}{\tilde\rho}(-\bar v_1z_1+z_2)=\bar v_1+O(1)|(z_1,z_2)|,\\[3mm]
\di\tilde{\theta}\ =\bar \theta+\frac{\gamma - 1}{R\tilde\rho}\left[-\frac{R}{\gamma - 1}\bar \theta z_1 + z_3-\frac{1}{2}\bar v_1^2z_1 - \bar v_1(-\bar v_1z_1+z_2)\right]-\frac{\gamma - 1}{2R\tilde\rho^2}(-\bar v_1z_1+z_2)^2\\[3mm]
\quad=\bar \theta+O(1)|(z_1,z_2,z_3)|.
\end{array}
\end{equation}
Then the approximate wave profile $(\tilde{\rho}, \tilde{v}_1, \tilde{\theta})$ satisfies the system
\begin{equation} \label{AW}
\begin{cases}
\displaystyle \tilde{\rho}_t + (\tilde{\rho}\tilde{v}_1)_{x_1} = 0, \\
\displaystyle (\tilde{\rho} \tilde{v}_1)_t + (\tilde{\rho} \tilde{v}_1^2 + R\tilde{\rho}\tilde{\theta})_{x_1} = (2\mu + \lam)\varepsilon \bar{v}_{1x_1x_1} +Q_1, \\[3mm]
\displaystyle \frac{R}{\gamma - 1}\left[(\tilde{\rho}\tilde{\theta})_t + (\tilde{\rho}\tilde{v}_1\tilde{\theta})_{x_1}\right] + R\tilde{\rho}\tilde{\theta}\tilde{v}_{1x_1} = \kappa\varepsilon\bar{\theta}_{x_1x_1} + (2\mu + \lambda)\varepsilon\bar{v}_{1x_1}^2 +Q_2,
\end{cases}
\end{equation}
with the initial data
\begin{equation} \label{AWI}
(\tilde{\rho}, \tilde{v}_1, \tilde{\theta})(0, x_1) = (\bar{\rho}_0, \bar{v}_{10}, \bar{\theta}_0)(x_1),
\end{equation}
and the error terms
\begin{equation}\label{Q1}
\begin{array}{ll}
\displaystyle  Q_1:=\left(\frac{\tilde m_1^2}{\tilde\rho}-\frac{\bar m_1^2}{\bar\rho}+\frac{\bar m_1^2}{\bar\rho^2}z_1-\frac{2\bar m_1}{\bar\rho}z_2\right)_{x_1} + (\tilde p-\bar p-\bar{p}_{\bar{\rho}} z_1 - \bar{p}_{\bar{m}_1}z_2 - \bar{p}_{\bar{\mathcal{E}}} z_3)_{x_1} \\[4mm]
\displaystyle \ \ \quad=\left[\frac{3-\gamma}{2\tilde{\rho}}(\bar{v}_1z_1 - z_2)^2\right]_{x_1}=O(1)\left[|\bar v_{1x_1}||(z_1,z_2)|^2+|(z_1,z_2)||(z_{1x_1},z_{2x_1})|\right],
\end{array}
\end{equation}
and
\begin{equation}\label{Q2}
\begin{array}{ll}
\di Q_2:=\left(\frac{\tilde m_1\tilde{\mathcal{E}}}{\tilde\rho}-\frac{\bar m_1\bar{\mathcal{E}}}{\bar\rho}+\frac{\bar m_1\bar{\mathcal{E}}}{\bar\rho^2}z_1-\frac{\bar{\mathcal{E}}}{\bar\rho}z_2-\frac{\bar m_1}{\bar\rho}z_3\right)_{x_1}\\[4mm]
\ \ \qquad\di +\left(\tilde p\frac{\tilde m_1}{\tilde\rho}-\bar p\frac{\bar m_1}{\bar\rho}-\bar{p}_{\bar{\rho}}\frac{\bar m_1}{\bar\rho} z_1 +\bar p \frac{\bar m_1}{\bar\rho^2}z_1- \bar{p}_{\bar{m}_1}\frac{\bar m_1}{\bar\rho}z_2-\frac{\bar p}{\bar\rho}z_2 - \bar{p}_{\bar{\mathcal{E}}}\frac{\bar m_1}{\bar\rho} z_3\right)_{x_1} - \tilde{v}_1Q_1\\[4mm]
\ \ \quad= \left[\frac{-\bar{v}_1z_1 + z_2}{\tilde{\rho}}\left(\gamma z_3 - (\gamma - 1)\bar{v}_1z_2 - \frac{R\gamma}{\gamma - 1}\bar{\theta}z_1 + \frac{\gamma - 2}{2}\bar v_1^2z_1\right)  - (\gamma - 1)\tilde{v}_1\frac{(-\bar{v}_1z_1 + z_2)^2}{2\tilde{\rho}}\right]_{x_1}\\ [3mm]
 \ \ \qquad\di - (2\mu + \lambda)\varepsilon\bar{v}_{1x_1x_1}\frac{-\bar{v}_1z_1 + z_2}{\tilde{\rho}}
- \tilde{v}_1\left(\frac{3-\gamma}{2\tilde{\rho}}(-\bar{v}_1z_1 + z_2)^2\right)_{x_1}\\[4mm]
\ \ \quad =O(1)\left[|\bar v_{1x_1}||(z_1,z_2,z_3)|^2+|(z_1,z_2,z_3)||(z_{1x_1},z_{2x_1},z_{3x_1})|+\varepsilon|\bar{v}_{1x_1x_1}||(z_1,z_2)|\right].
\end{array}
\end{equation}
\begin{remark}
With the approximate solution profile $(\tilde{\rho}, \tilde{v}_1, \tilde{\theta})$ of the full compressible Navier-Stokes-Fourier equations \eqref{NS} combined by both the approximate rarefaction wave and hyperbolic wave, the error terms in \eqref{Q1} and \eqref{Q2} are good enough to obtain the desired uniform estimates with respect to the dissipation coefficients for the justification of the vanishing dissipation limit.
\end{remark}

Finally, by the estimate of the hyperbolic wave in Lemma \ref{lemma2.3} and noting that $\delta = \varepsilon^\frac 16|\ln\varepsilon|$ in the present paper, we have
\[
|z_i| \leq C_T \frac{\varepsilon}{\delta^{3/2}} = C_T \varepsilon^{\frac{3}{4}}|\ln\varepsilon|^{-\frac32} \leq \frac{1}{4}\rho_-, \qquad i=1,2,3,
\]
provided that $\varepsilon \ll 1$. Then we have from \eqref{ue} that
\begin{equation}\label{t-rho}
0 < \frac{3}{4}\rho_- = \rho_- - \frac{1}{4}\rho_- \leq \tilde{\rho} = \bar{\rho} + z_1 \leq \rho_+ + \frac{1}{4}\rho_-, \quad |\tilde{v}_1| \leq C_1,
\end{equation}
since $0 < \rho_- \leq \bar{\rho} \leq \rho_+ $ and $ |\bar{v}_1| \leq C$.
Similarly, since $0 < \theta_- \leq \bar{\theta} \leq \theta_+$, we have from Lemma \ref{lemma2.3} and \eqref{ue} that
\begin{equation}\label{t-theta}
0 < \frac{3}{4}\theta_- \leq \tilde{\theta} \leq \theta_+ + \frac{1}{4}\theta_-,
\end{equation}
provided that $\varepsilon \ll 1$. The uniform boundedness in \eqref{t-rho} and \eqref{t-theta} for the solution profile  $(\tilde{\rho}, \tilde{v}_1, \tilde{\theta})$, in particular,  the lower bound of the approximate density and temperature are quite important for the existence of   solution to the 3D Navier-Stokes-Fourier system \eqref{NS} around this profile.

%
%

\section{Proof of Theorem \ref{theorem1}}
\setcounter{equation}{0}

As the first step of the proof of Theorem \ref{theorem1} we
reformulate the problem as the perturbation of the solution $(\rho^\varepsilon, \bv^\varepsilon, \theta^\varepsilon)$ to the 3D Navier-Stokes-Fourier system \eqref{NS} around the approximate wave profile $(\tilde{\rho}, \tilde{\bv}:= (\tilde{v}_1, 0, 0)^\top, \tilde{\theta})(t, x_1)$ defined in \eqref{AW0} and \eqref{AW}.  In the setting of classical solutions, we can rewrite the 3D full compressible Navier-Stokes-Fourier system \eqref{NS} as
\begin{equation}  \label{RNS}
\begin{cases}
\displaystyle \rho_t + \div(\rho \bv) = 0,    \\ 
\displaystyle (\rho \bv)_t + \div(\rho \bv \otimes \bv) + R\nabla(\rho\theta) = \mu\varepsilon\triangle\bv + (\mu + \lambda)\varepsilon\nabla \div\bv, \\
\displaystyle \frac{R}{\gamma-1}\left[(\rho\theta)_t + \div(\rho\theta\bv)\right] + R\rho\theta\div \bv = \kappa\varepsilon\triangle\theta + \frac{\mu\varepsilon}{2}|\nabla \bv + (\nabla\bv)^\top|^2 + \lambda\varepsilon(\div \bv)^2.
\end{cases}
\end{equation} 
Denote the perturbation of $(\rho^\varepsilon, \bv^\varepsilon, \theta^\varepsilon)$ around  
$(\tilde{\rho}, \tilde{\bv}, \tilde{\theta})(t, x_1)$ by
\begin{equation} \label{per}
\begin{array}{ll}
\di \varphi(t, x) := \rho^\varepsilon(t, x) - \tilde{\rho}(t, x_1), \\[3mm]
\di \Psi(t, x) = (\psi_1, \psi_2, \psi_3)^\top(t, x) := \bv^\varepsilon(t, x) - \tilde{\bv}(t, x)
   =(v_1^\varepsilon, v_2^\varepsilon, v_3^\varepsilon)^\top(t, x) - (\tilde{v}_1, 0, 0)^\top(t, x_1), \\[3mm]
\di \xi(t, x) := \theta^\varepsilon(t, x) - \tilde{\theta}(t, x_1), 
\end{array}
\end{equation}
then the solution $(\rho^\varepsilon, \bv^\varepsilon, \theta^\varepsilon)$ to the full compressible Navier-Stokes-Fourier system \eqref{NS} or \eqref{RNS} has the following initial data:
\begin{equation} \label{PNSI}
(\rho^\varepsilon, \bv^\varepsilon, \theta^\varepsilon)(0, x) := (\bar{\rho}_0, \bar{\bv}_0, \bar{\theta}_0)(x_1) + (\varphi_0, \Psi_0, \xi_0)(x),
\end{equation}
where $\Psi_0 := (\psi_{10}, \psi_{20}, \psi_{30})^\top$. 
For the sake of simplicity, the superscript $\varepsilon$ will be dropped in $(\rho^\varepsilon, \bv^\varepsilon, \theta^\varepsilon)$ when there is no confusion.
From \eqref{RNS} and \eqref{AW}, the following system for   $(\varphi, \Psi, \xi)(t,x)$ can be derived:
\begin{equation} \label{REF}
    \begin{cases}
    \displaystyle \varphi_t + \bv\cdot\nabla\varphi + \rho \div\Psi + \tilde{\rho}_{x_1}\psi_1 + \tilde{v}_{1x_1}\varphi = 0, \\
    \displaystyle \rho \Psi_t + \rho \bv\cdot\nabla\Psi + R\theta\nabla\varphi + R\rho\nabla\xi + (\rho \tilde{v}_{1x_1} \psi_1, 0, 0)^\top + \left(R\tilde{\rho}_{x_1}\left(\theta - \frac{\rho}{\tilde{\rho}} \tilde{\theta}\right) , 0, 0\right)^\top \\[4mm]
    = \mu\varepsilon\triangle\Psi + (\mu + \lam)\varepsilon\nabla \div\Psi + \left((2\mu + \lam)\varepsilon\left(\frac{-\bar{v}_1z_1 + z_2}{\tilde{\rho}}\right)_{x_1x_1}, 0, 0\right)^\top \\[4mm]
    \quad - \left((2\mu + \lam)\varepsilon\frac{\bar{v}_{1x_1x_1}}{\tilde{\rho}}\varphi, 0, 0\right)^\top - \left(Q_1\frac{\rho}{\tilde{\rho}}, 0, 0\right)^\top, \\[4mm]
    \displaystyle \frac{R}{\gamma - 1}(\rho \xi_t + \rho \bv\cdot\nabla\xi) + R\rho\theta \div\Psi + \frac{R}{\gamma - 1}\rho \tilde{\theta}_{x_1} \psi_1 + R\rho\tilde{v}_{1x_1}\xi \\[4mm]
    = \kappa\varepsilon\triangle\xi + \frac{\mu\varepsilon}{2}|\nabla\Psi + (\nabla\Psi)^\top|^2 + \lambda\varepsilon(\div\Psi)^2 + 2\tilde{v}_{1x_1}(2\mu\varepsilon\psi_{1x_1} + \lambda\varepsilon \div\Psi) \\[3mm]
    \quad + F_1 + F_2  - \frac{\rho}{\tilde{\rho}}Q_2,
    \end{cases}
\end{equation}
supplemented with the initial data
\begin{equation} \label{REFI}
  (\varphi, \Psi, \xi)(0, x) = (\varphi_0, \Psi_0, \xi_0)(x),
\end{equation}
where
\begin{align}\label{F12}
&F_1 :=-\kappa\varepsilon(\tilde \theta_{x_1x_1}-\bar \theta_{x_1x_1})-(2\mu + \lambda)\varepsilon(\tilde{v}_{1x_1}^2-\bar{v}_{1x_1}^2)\nonumber\\
&\quad= \frac{\gamma - 1}{R}\kappa\varepsilon\left\{\frac{1}{\tilde{\rho}}\left[\left(\frac12\bar v_1^2 - \frac{R}{\gamma - 1}\bar{\theta}\right)z_1 - \bar{v}_1z_2 + z_3\right]\right\}_{x_1x_1} - \frac{\gamma - 1}{2R}\kappa\varepsilon\left[\left(\frac{-\bar{v}_1z_1 + z_2}{\tilde{\rho}}\right)^2\right]_{x_1x_1} \nonumber\\[3mm]
&\qquad + 2(2\mu + \lambda)\varepsilon\bar{v}_{1x_1}\left(\frac{-\bar{v}_1z_1 + z_2}{\tilde{\rho}}\right)_{x_1} 
+ (2\mu + \lambda)\varepsilon\left[\left(\frac{-\bar{v}_1z_1 + z_2}{\tilde{\rho}}\right)_{x_1}\right]^2\\[3mm]
&\quad=O(\varepsilon)\big[|(z_{1x_1x_1},z_{2x_1x_1}, z_{3x_1x_1})|+|(z_{1x_1},z_{2x_1})|^2+|\bar v_{1x_1}(z_{1x_1},z_{2x_1},z_{3x_1})|\nonumber\\[2mm]
&\qquad\qquad\quad  \di +|\bar v_{1x_1}(z_{1},z_{2},z_{3})|^2\big] ,\nonumber\\[3mm]
&F_2 := - \kappa\varepsilon\frac{\bar{\theta}_{x_1x_1}}{\tilde{\rho}}\varphi - (2\mu + \lambda)\varepsilon\frac{\bar{v}_{1x_1}^2}{\tilde{\rho}}\varphi.
\end{align}
We choose the initial perturbation $(\varphi_0, \Psi_0, \xi_0)(x)$ such that
\begin{equation}\label{ip}
\|(\nabla^i\varphi_0, \nabla^i\Psi_0, \nabla^i\xi_0)\|^2 = O\left(\frac{\varepsilon^{4-i}}{\delta^{7+i}}\right) = O\left(\varepsilon^{\frac{17-7i}{6}}|\ln\varepsilon|^{-7-i}\right), \quad i = 0, 1, 2.
\end{equation}
We note that 
the solution to \eqref{REF}, \eqref{REFI} and \eqref{ip} will be constructed in the functional space $\Pi(0, T)$ defined by
\begin{align*}
\Pi(0, t_1) = \left\{ (\varphi, \Psi, \xi)| \, (\varphi, \Psi, \xi) \in C^0([0, t_1]; H^2(\Omega)), (\nabla \Psi, \nabla\xi) \in L^2(0, t_1; H^2(\Omega))\right\}, \forall t_1 \in(0, T].
\end{align*}
To carry out the analysis,  we first set the {\it a priori} assumptions:
\begin{equation} \label{PA}
\begin{array}{cc}
\di \sup_{t \in [0, t_1(\varepsilon)]}\| (\varphi, \Psi, \xi)(t) \|_{L^\infty} \ll 1, \\[4mm]
\di \sup_{t \in [0, t_1(\varepsilon)]}\| (\nabla\varphi, \nabla\Psi, \nabla\xi)(t) \| \leq \varepsilon^{a_1}|\ln\varepsilon|^{-1}, \\[4mm]
\di \sup_{t \in [0, t_1(\varepsilon)]}\| (\nabla^2\varphi, \nabla^2\Psi, \nabla^2\xi)(t) \| \leq \varepsilon^{a_2}|\ln\varepsilon|^{-1},
\end{array}
\end{equation}
where $[0, t_1(\varepsilon)]$ is the time-interval of existence of solutions and   may depend on $\varepsilon$,  and both $a_1$ and $a_2$ are positive constants  to be determined. 
Note that the above {\it a priori} assumptions \eqref{PA} with different rates are very important to control the nonlinear terms for justifying the vanishing dissipation limit in the three-dimensional case, which dorminate the decay rate of the vanishing dissipation limit.
Under the {\it a priori} assumptions \eqref{PA} and the boundedness in \eqref{t-rho} and \eqref{t-theta}, we can get
\begin{equation} \label{db}
	0 < \frac{1}{2} \rho_- \leq \rho = \varphi + \tilde{\rho} \leq \rho_+ + \frac{1}{2}\rho_-, \quad 0 < \frac{1}{2} \theta_- \leq \theta = \xi + \tilde{\theta} \leq \theta_+ + \frac{1}{2}\theta_-, \quad |\bv| \leq C,
\end{equation}
because we can take $\| (\varphi, \Psi, \xi) \|_{L^\infty} \leq \min \{\frac{1}{4}\rho_-, \frac14\theta_-\}$.
The above uniform bounds of the density $\rho$ imply that   the two equations $\eqref{RNS}_2$ and $\eqref{RNS}_3$   are strictly parabolic, which ensures  the   existence of classical solution to the hyperbolic-parabolic coupled perturbation system \eqref{REF}. 
To prove our main result Theorem \ref{theorem1}, it is sufficient to prove the following result.

\begin{theorem} \label{proposition3.1}
{\rm (Global existence and uniform estimates)}
There exists a positive constant $\varepsilon_1<1$ 
such that if $0 < \varepsilon \leq \varepsilon_1$ and under the choice $\delta = \varepsilon^{\frac{1}{6}}|\ln\varepsilon|$ for the approximate rarefaction wave in \eqref{ABE},  the perturbation problem \eqref{REF}-\eqref{REFI}  has a unique global solution $(\varphi, \Psi, \xi) \in \Pi(0, T)$ satisfying
  \begin{align}
  \begin{aligned} \label{PRO3.1}
    &\sup_{0 \leq t \leq T} \| (\varphi, \Psi, \xi)(t) \|^2 + \int_{0}^{T} \Big[\|\bar{v}_{1x_1}^{1/2} (\varphi, \psi_1, \xi)\|^2 + \varepsilon\|(\nabla\Psi, \nabla\xi)\|^2 \Big]dt \\
    &\leq C_T \varepsilon^{\frac{17}{6}}|\ln\varepsilon|^{-7} + C \| (\varphi_0, \Psi_0, \xi_0) \|^2, \\
    &\sup_{0 \leq t \leq T} \| (\nabla^i\varphi, \nabla^i\Psi, \nabla^i\xi)(t) \|^2 + \int_{0}^{T} \Big[\|\bar{v}_{1x_1}^{1/2} \nabla^i\varphi\|^2 + \varepsilon\|(\nabla^{1+i}\Psi, \nabla^{1+i}\xi)\|^2 \Big]dt \\
    &\leq C_T \varepsilon^{\frac{17-7i}{6}}|\ln\varepsilon|^{-7-i} + C \| (\nabla^i\varphi_0, \nabla^i\Psi_0, \nabla^i\xi_0) \|^2,\quad i=1,2,
  \end{aligned}
  \end{align}
for some  constant $C_T>0$ that is independent of $\varepsilon$ and $\delta$, but may depend on $T$.
\end{theorem}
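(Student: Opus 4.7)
The plan is to combine a classical local-existence result with uniform-in-$\varepsilon$ a priori estimates under the assumptions \eqref{PA}, and then to extend the solution up to the fixed time $T$ via continuation. Local existence in $\Pi(0,t_1)$ is standard for hyperbolic-parabolic coupled systems: the bounds \eqref{t-rho}--\eqref{t-theta} on the profile together with the $L^\infty$ smallness in \eqref{PA} ensure that $\rho$ and $\theta$ remain strictly positive and bounded, so the momentum and temperature equations in \eqref{REF} are strictly parabolic while the continuity equation is transport with smooth coefficients. The heart of the proof is therefore to close \eqref{PA} by showing the genuine bounds \eqref{PRO3.1} strictly improve the assumed rates.

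For the basic $L^2$ estimate I would test the perturbation system against a relative-entropy density adapted to the full Navier-Stokes-Fourier profile,
\[
\mathcal{E}(\varphi,\Psi,\xi) \;:=\; R\tilde{\theta}\,\tilde{\rho}\,\Phi\!\left(\tfrac{\rho}{\tilde{\rho}}\right) \;+\; \tfrac{\rho}{2}|\Psi|^2 \;+\; \tfrac{R}{\gamma-1}\rho\,\tilde{\theta}\,\Phi\!\left(\tfrac{\theta}{\tilde{\theta}}\right),\qquad \Phi(s):=s-\ln s-1,
\]
which is nonnegative and quadratic in $(\varphi,\Psi,\xi)$ near the profile. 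Differentiating $\int_\Omega \mathcal{E}\,dx$ in time and using \eqref{REF} together with the profile equations \eqref{AW}, the monotonicity from $\bar v_{1x_1}>0$ produces the weighted rarefaction dissipation $\|\bar v_{1x_1}^{1/2}(\varphi,\psi_1,\xi)\|^2$, while the physical viscosity and heat-conduction yield the parabolic part $\varepsilon\|(\nabla\Psi,\nabla\xi)\|^2$. The right-hand side consists of the hyperbolic-wave errors $Q_1,Q_2,F_1,F_2$ from \eqref{Q1}--\eqref{F12}, which must be estimated by pairing $L^\infty$ and $L^2$ bounds from Lemma \ref{lemma2.3} multiplicatively (e.g.\ $\|Q_1\|^2\lesssim \|z_x\|_{L^\infty}^2\|z\|_{L^2}^2+\|z\|_{L^\infty}^2\|z_x\|_{L^2}^2+\cdots$), plus purely nonlinear perturbation terms absorbed via Cauchy--Schwarz and $\|(\varphi,\Psi,\xi)\|_{L^\infty}\ll 1$. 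With $\delta=\varepsilon^{1/6}|\ln\varepsilon|$ from \eqref{del}, every such contribution is of order $\varepsilon^4/\delta^7 = \varepsilon^{17/6}|\ln\varepsilon|^{-7}$, exactly matching the target in \eqref{PRO3.1} for $i=0$.

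For the first- and second-order derivative estimates ($i=1,2$), I would apply $\nabla$, resp.\ $\nabla^2$, to \eqref{REF}, pair with $\nabla^i(\varphi,\Psi,\xi)$ in $L^2$, and repeat the energy calculation. Differentiation produces additional source contributions $\nabla^i Q_j,\nabla^i F_j$ that inherit the factor $\varepsilon^2/\delta^{2(1+k)}$ from Lemma \ref{lemma2.3}, together with commutators between $\nabla^i$ and the nonlinear convective and pressure terms. As emphasized in the introduction, in the non-scaled variables a naive bound fails here: one must exploit the structural cancellations between the convective flux and the pressure/viscous terms in the momentum and energy equations (these are the cancellations to be developed in the derivative lemmas of Section 4). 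Combined with the graded a priori assumptions \eqref{PA}, this yields
\[
\|(\nabla^i\varphi,\nabla^i\Psi,\nabla^i\xi)(t)\|^2+\text{dissipation} \;\lesssim\; C_T\varepsilon^{(17-7i)/6}|\ln\varepsilon|^{-7-i}+C\|\nabla^i(\varphi_0,\Psi_0,\xi_0)\|^2.
\]

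The main obstacle is closing the bootstrap consistently across the three levels. That \eqref{PA} posits \emph{distinct} exponents $a_1,a_2$ rather than a single smallness condition is precisely what makes the argument work: the most dangerous cubic interactions at the second-derivative level couple an $L^\infty$ norm of $\nabla(\varphi,\Psi,\xi)$ (obtained by Sobolev embedding from the $i=2$ estimate) against an $L^2$ dissipation, and only the graded rates are compatible with the $\varepsilon$-singular sources in the non-scaled variables. The sharp choice $b=\tfrac{1}{6}$, i.e.\ $\delta=\varepsilon^{1/6}|\ln\varepsilon|$, is dictated by simultaneously balancing the approximation error $\delta|\ln\delta|$ from Lemma \ref{lemma2.2}(iii) against the most singular source $(\varepsilon/\delta^3)^2$ arising at the highest derivative order. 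Once \eqref{PRO3.1} is established on $[0,t_1(\varepsilon)]$ with constants strictly improving \eqref{PA}, continuation extends the solution to $[0,T]$, and the pointwise convergence and $L^\infty$-rate \eqref{AB} in Theorem \ref{theorem1} follow by Sobolev embedding combined with Lemma \ref{lemma2.2}(iii).
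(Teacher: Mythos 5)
Your overall architecture matches the paper's: Theorem~\ref{proposition3.1} is reduced to the uniform \emph{a priori} bounds in Proposition~\ref{proposition3.2} plus standard local existence and continuation; the $i=0$ estimate uses a relative-entropy functional of exactly the form you write (this is Lemma~\ref{lemma4.1}); and the $i=1,2$ estimates use weighted energy estimates built around cancellations, with the graded assumptions \eqref{PA} feeding a bootstrap.

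Two substantive points where your account diverges from what the paper actually does. First, the mechanism determining $b=\tfrac16$ is not the balance between the approximation error $\delta|\ln\delta|$ of Lemma~\ref{lemma2.2}(iii) and a source of size $(\varepsilon/\delta^3)^2$. If one equates $\delta|\ln\delta|\sim\varepsilon^{b}|\ln\varepsilon|^2$ with $(\varepsilon/\delta^3)^2\sim\varepsilon^{2-6b}|\ln\varepsilon|^{-6}$, one gets $b=\tfrac{11}{36}$, not $\tfrac16$. In the paper $b$ is fixed purely inside the bootstrap: the cubic term in \eqref{25} forces $a_2\geq\tfrac14$ (from $-\tfrac13+\tfrac43a_2\geq0$), and closing the second-derivative level against the dominant $\varepsilon^2/\delta^9$ terms (e.g.\ \eqref{dr3}, \eqref{dr4}) requires $2-9b\geq2a_2$, i.e.\ $b\leq\tfrac{2-2a_2}{9}$; these two constraints together give the optimal $a_2=\tfrac14$, $b=\tfrac16$. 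The approximation error $\delta|\ln\delta|$ then ends up dominating the \emph{final} rate \eqref{AB}, but it plays no role in fixing $b$ or the rates \eqref{PRO3.1} themselves — the paper is emphatic that here, unlike the 2D scaled case, the rate is dictated by the nonlinear flux terms, not the profile error. Second, the cancellation you invoke (``convective flux against pressure/viscous terms'') is actually the 2D mechanism of \cite{LWW2}; for the 3D derivative estimates the paper cancels \emph{flux against flux}, which requires the specific weighted multipliers $\tfrac{R\theta}{\rho}\nabla\varphi$, $-\triangle\Psi$, $-\tfrac1\theta\triangle\xi$ (and their second-order analogues) so that the pressure–density and pressure–temperature cross terms $R\theta\nabla\varphi\cdot\triangle\Psi$ and $R\rho\nabla\xi\cdot\triangle\Psi$ exactly drop out between the three equations. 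Testing $\nabla^i$ of \eqref{REF} against the unweighted $\nabla^i(\varphi,\Psi,\xi)$, as you propose, does not produce this cancellation unless you insert precisely these weights, and without them the leftover cross terms are not integrable in $\varepsilon$.
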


Once Theorem \ref{proposition3.1} is proved, by the initial perturbation satisfying \eqref{ip} and the Sobolev inequality in dimension three (cf. \cite{WW}), we have from \eqref{PRO3.1} that
\begin{equation}\label{dr1}
\begin{array}{ll}
  \di \sup_{0 \leq t \leq T} \| (\varphi, \Psi, \xi)(t, x) \|_{L^\infty(\Omega)} \\
  \di \leq C \sup_{0 \leq t \leq T} \Big[\| (\varphi, \Psi, \xi) \|^{\frac12} \| (\nabla\varphi, \nabla\Psi, \nabla\xi) \|^{\frac12} + \| (\nabla\varphi, \nabla\Psi, \nabla\xi) \|^{\frac12}\| (\nabla^2\varphi, \nabla^2\Psi, \nabla^2\xi) \|^{\frac12}\Big] \\[4mm]
  \di \leq C_T \varepsilon^{\frac{9}{8}}|\ln\varepsilon|^{-\frac{15}4} + C_T \varepsilon^{\frac{13}{24}}|\ln\varepsilon|^{-\frac{17}4}=O(1) \varepsilon^{\frac{13}{24}}|\ln\varepsilon|^{-\frac{17}4}, 
  \end{array}
\end{equation}
and then
\begin{align}\label{dr}
  &\left\|(\rho, \bv, \theta)(t, x) - (\rho^r, \bv^r, \theta^r)\left(\frac{x_1}{t}\right)\right\|_{L^\infty(\Omega)}\nonumber \\
  &\leq \| (\varphi, \Psi, \xi)(t, x) \|_{L^\infty(\Omega)} + C \| (z_1, z_2, z_3)(t, x_1) \|_{L^\infty(\mathbb{R})}\nonumber \\
  &\quad + \left\|(\bar{\rho}, \bar v_1, \bar{\theta})(t, x_1) - (\rho^r, v_1^r, \theta^r)\left(\frac{x_1}{t}\right)\right\|_{L^\infty(\mathbb{R})} \\
  &\leq C_T \varepsilon^{\frac{13}{24}}|\ln\varepsilon|^{-\frac{17}{4}} + C_T \frac{\varepsilon}{\delta^{3/2}} + C \delta t^{-1} [\ln(1 + t) + |\ln \delta|]\nonumber\\
  &=O(1)\varepsilon^{\frac16}|\ln\varepsilon|^2,\nonumber
\end{align}
which completes the proof of Theorem \ref{theorem1}.

We remark that even though the decay rate with respect to $\varepsilon$ in \eqref{dr1} for the perturbation around the approximate rarefaction wave and the hyperbolic wave is higher than the final vanishing dissipation rate in \eqref{dr}, all the {\it a priori} estimates are performed and the nonlinear terms are controlled under the choice of the parameter $\delta=\varepsilon^{\frac{1}{6}}|\ln\varepsilon|$ in the approximate rarefaction wave \eqref{ABE}. In other words, here the decay rate with respect to the dissipation parameters in the 3D case is determined by the nonlinear terms in the original variables $x$ and $t$, which is quite different from the two-dimensional case for the scaled variables in \cite{LWW2} where the final decay rate with respect to the viscosities is dorminated by the error terms due to the inviscid rarefaction wave profile and the hyperbolic wave. Moreover, the vanishing dissipation rate of the self-similar rarefaction wave fan in \eqref{dr} for the 3D full compressible Navier-Stokes-Fourier equations \eqref{NS} seems optimal at least in our framework.

In fact, to control the nonlinear term in \eqref{25} of Section 4 below, we need
\begin{align}\label{a-2}
	-\frac 13 + \frac 43 a_2 \geq 0,
\end{align}
which is equivalent to $a_2\geq\frac14$.
Also,  in order to close the {\it a priori} assumption \eqref{PA}, by the estimates of the nonlinear terms in \eqref{dr}, \eqref{dr4}, \eqref{dr1} and \eqref{dr3} and noting that $\delta=\varepsilon^b |\ln \varepsilon|$ from \eqref{del}, we also require
\begin{align*}
	2 - 9b \geq 2a_2,
\end{align*}
that is,
\begin{align}\label{b}
	b\leq\frac{2-2a_2}9.
\end{align}
Therefore, by \eqref{a-2} and \eqref{b}, it is optimal to take $a_2 = \frac 14$ in \eqref{PA} and then $b = \frac 16$ in \eqref{del} for the approximate rarefaction wave \eqref{ABE}, i.e. $\delta = \varepsilon^{\frac{1}{6}}|\ln\varepsilon|$ as we choose. Correspondingly, by the estimate of  the nonlinear term in \eqref{15} of Section 4 below, we need
\begin{align*}
	-\frac{3}{5} + \frac{2}{5}a_1 + \frac{6}{5}a_2 \geq 0,
\end{align*}
to close the {\it a priori} assumption \eqref{PA}, that is,
\begin{align*}
 2a_1\geq 3-6a_2=\frac 32.
\end{align*}
Thus we can take $a_1=\frac34$ in \eqref{PA}.
Then we can close our {\it a priori} assumptions \eqref{PA} and complete the proof of Theorem \ref{theorem1}.

Now it remains to prove Theorem \ref{proposition3.1}. The local existence and uniqueness of the   classical solution to the hyperbolic-parabolic coupled system \eqref{REF}-\eqref{REFI}
is well-known and hence its proof is omitted (c.f. \cite{MN, So, LWW}). Therefore, to prove Theorem \ref{proposition3.1}, it suffices to obtain the following uniform {\it a priori} estimates.

\begin{proposition}[Uniform {\it a priori} estimates] \label{proposition3.2}
Let $(\varphi, \Psi, \xi) \in \Pi(0, t_1(\varepsilon))$ be a solution to the problem \eqref{REF}-\eqref{REFI}   for some $t_1(\varepsilon) \in (0, T]$. 
There exists a   constant $\varepsilon_2>0$   independent of $\varepsilon, \delta$ and $t_1(\varepsilon
)$, such that, for  $0<\varepsilon \leq \varepsilon_2$ and under the choice $\delta = \varepsilon^{\frac{1}{6}}|\ln\varepsilon|$ and the {\it a priori} assumptions \eqref{PA},
it holds that
  \begin{align}
	\begin{aligned} \label{PRO3.2}
	  &\sup_{0 \leq t \leq t_1(\varepsilon)} \| (\varphi, \Psi, \xi)(t) \|^2 + \int_{0}^{t_1(\varepsilon)} \left[\|\bar{v}_{1x_1}^{1/2} (\varphi, \psi_1, \xi)\|^2 + \varepsilon\|(\nabla\Psi, \nabla\xi)\|^2 \right]dt \\
	  &\leq C_T \varepsilon^{\frac{17}{6}}|\ln\varepsilon|^{-7} + C \| (\varphi_0, \Psi_0, \xi_0) \|^2, \\
	  &\sup_{0 \leq t \leq t_1(\varepsilon)} \| (\nabla^i\varphi, \nabla^i\Psi, \nabla^i\xi)(t) \|^2 + \int_{0}^{t_1(\varepsilon)} \left[\|\bar{v}_{1x_1}^{1/2} \nabla^i\varphi\|^2 + \varepsilon\|(\nabla^{1+i}\Psi, \nabla^{1+i}\xi)\|^2 \right]dt \\
	  &\leq C_T \varepsilon^{\frac{17-7i}{6}}|\ln\varepsilon|^{-7-i} + C \| (\nabla^i\varphi_0, \nabla^i\Psi_0, \nabla^i\xi_0) \|^2,\quad  i=1, 2,
	\end{aligned}
  \end{align}
for some constant $C_T>0$ that is independent of $\varepsilon$ and $\delta$, but may depend on $T$.
\end{proposition}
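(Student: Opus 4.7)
The plan is to prove Proposition \ref{proposition3.2} by a sequential $L^2$, $H^1$ and $H^2$ energy method in the original unscaled variables, exploiting three structural features of the perturbation system \eqref{REF}. First, the monotonicity $\bar v_{1x_1}>0$ of the approximate rarefaction wave (Lemma \ref{lemma2.2}(i)) produces a positive-definite bulk damping $\bar v_{1x_1}(\varphi^2+\psi_1^2+\xi^2)$, analogous to the one-dimensional stability theory. Second, the viscous and heat-conductive terms yield $\varepsilon$-weighted parabolic dissipation of $(\nabla\Psi,\nabla\xi)$ and, after differentiation, of $(\nabla^{1+i}\Psi,\nabla^{1+i}\xi)$. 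Third, the hyperbolic wave $\textbf{z}$ has absorbed the singular viscous forcing of the rarefaction profile, so the inhomogeneities $Q_1,Q_2,F_1,F_2$ obey the pointwise bounds \eqref{Q1}--\eqref{F12} and can be controlled via Lemmas \ref{lemma2.2} and \ref{lemma2.3} by known powers of $\varepsilon/\delta$. Throughout, the periodicity in $(x_2,x_3)$ is used freely so that tangential integration by parts produces no boundary terms.

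For the basic $L^2$ estimate I would use the relative-entropy multiplier: multiply the continuity equation by $R\tilde\theta\,\Phi'(\rho/\tilde\rho)$ with $\Phi(s)=s-1-\ln s$, the momentum equation by $\Psi$, and the internal-energy equation by $\xi/\theta$, then sum and integrate over $\Omega\times[0,t]$. The convective fluxes combine into a time derivative of the positive functional $\int \rho\bigl\{\tfrac12|\Psi|^2 + R\tilde\theta\,\Phi(\rho/\tilde\rho) + \tfrac{R}{\gamma-1}\tilde\theta\,\Phi(\theta/\tilde\theta)\bigr\}\,dx$, the linear profile terms collapse (after Taylor expansion in $(\varphi,\xi)$) into the rarefaction damping $\int_0^t\!\int \bar v_{1x_1}(\varphi^2+\psi_1^2+\xi^2)\,dx\,d\tau$, and the viscous terms give $\varepsilon\int_0^t\!\int(|\nabla\Psi|^2+|\nabla\xi|^2)$. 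The remaining error integrals coming from $F_1,F_2,Q_1,Q_2$ are handled by Cauchy--Schwarz and Young, using Lemma \ref{lemma2.3} to bound $\|\partial_{x_1}^k \textbf{z}\|_{L^2}\le C_T\,\varepsilon/\delta^{k+1}$; they fit comfortably inside $\varepsilon^{17/6}|\ln\varepsilon|^{-7}$ once $\delta=\varepsilon^{1/6}|\ln\varepsilon|$ is inserted.

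For the first and second-order estimates I would apply $\nabla$ and $\nabla^2$ to \eqref{REF} and repeat the same multiplier strategy on the differentiated system, with $\nabla^i\Psi$ and $\nabla^i\xi$ replacing $\Psi$ and $\xi$, together with a density-weighted $R\tilde\theta\,\nabla^i\varphi/\tilde\rho$ multiplier for the continuity equation. The delicate pieces are the commutator terms involving the spatially varying $\tilde\rho,\tilde v_1,\tilde\theta$, for which the bounds $\|\partial_{x_1}^j(\bar\rho,\bar v_1,\bar\theta)\|_{L^\infty}\lesssim (\delta+t)^{-1}\delta^{-j+1}$ from Lemma \ref{lemma2.2} must be paired with the rarefaction-wave damping and the three-dimensional interpolation $\|\nabla\Psi\|_{L^3}\le C\|\nabla\Psi\|^{1/2}\|\nabla^2\Psi\|^{1/2}$. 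Here I also plan to invoke the structural cancellation mentioned in the introduction: after rearranging $\rho\bv\cdot\nabla - \tilde\rho\tilde v_1\partial_{x_1}$ so that profile derivatives are grouped with a perturbation factor, the bare profile derivative effectively pairs with $\tilde v_{1x_1}$ and is absorbed into the rarefaction damping rather than into a bare $\delta^{-1}$ loss.

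The main obstacle will be the $H^2$ estimate, where the top-order nonlinearity $\int\Psi\cdot\nabla(\nabla^2\Psi)\cdot\nabla^2\Psi$ is borderline in three dimensions. After one integration by parts it reduces to $\int\nabla\Psi\,|\nabla^2\Psi|^2 \le \|\nabla\Psi\|_{L^\infty}\|\nabla^2\Psi\|^2 \le C\|\nabla^2\Psi\|^{1/2}\|\nabla^3\Psi\|^{1/2}\|\nabla^2\Psi\|^2$; using the a priori bound $\|\nabla^2\Psi\|\le\varepsilon^{a_2}|\ln\varepsilon|^{-1}$ and Young's inequality (so that $\varepsilon\|\nabla^3\Psi\|^2$ is absorbed on the left) leaves a remainder of order $\varepsilon^{-1+4a_2/3}|\ln\varepsilon|^{-4/3}\|\nabla^2\Psi\|^2$, forcing $a_2\ge 1/4$ exactly as in \eqref{a-2}. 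Balancing this against the profile-error size through the scaling $\delta=\varepsilon^b|\ln\varepsilon|$ and the constraint \eqref{b} then pins down $a_2=1/4$ and $b=1/6$ as sharp. Once the three levels of estimates are combined and the initial smallness \eqref{ip} is invoked, a standard Gronwall argument strictly improves the a priori assumptions \eqref{PA}, closing the continuation argument and yielding \eqref{PRO3.2} on the whole $[0,T]$.
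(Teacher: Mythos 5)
Your proposal follows essentially the same route as the paper: a relative-entropy $L^2$ estimate (the multipliers you describe reproduce, up to integration by parts, the pair $(\eta_*,\bq_*)$ used to derive \eqref{01} in Lemma \ref{lemma4.1}), followed by $H^1$ and $H^2$ energy estimates closed via the {\it a priori} assumptions \eqref{PA}, with the correct constraints $a_2=\tfrac14$, $a_1=\tfrac34$, $b=\tfrac16$. Two points deserve sharper attention. First, the ``cancellation between flux terms'' the paper exploits in Lemmas \ref{lemma4.2}--\ref{lemma4.3} is not a rearrangement of $\rho\bv\cdot\nabla-\tilde\rho\tilde v_1\partial_{x_1}$ as you describe, but rather the exact cancellation of the pressure-coupling terms $\pm R\theta\nabla\varphi\cdot\triangle\Psi$ and $\pm R\rho\nabla\xi\cdot\triangle\Psi$ that arises from the specific choice of multipliers: $-\triangle\Psi$ on the momentum equation, $\frac{R\theta}{\rho}\nabla\varphi$ (with the \emph{actual} $\theta,\rho$, not the profile values) on the gradient of the continuity equation, and $-\frac{1}{\theta}\triangle\xi$ on the energy equation. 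If you instead use $\frac{R\tilde\theta}{\tilde\rho}\nabla\varphi$ as you suggest, the cancellation leaves a residual $R\xi\nabla\varphi\cdot\triangle\Psi$ which must be estimated separately; this is manageable but less clean. Second, the critical term that pins $a_2\ge\tfrac14$ is not $\int\Psi\cdot\nabla(\nabla^2\Psi)\cdot\nabla^2\Psi$ from the momentum convection (that piece integrates into a time derivative of $\rho|\nabla^2\Psi|^2/2$ modulo commutators), but $\int R\,\div\Psi\,|\nabla^2\varphi|^2$ arising from the density transport $\bv\cdot\nabla\varphi$ in the continuity equation (see \eqref{25}). The scaling and the resulting constraint are the same, so the conclusion is unaffected, but the source term is different and worth getting right.
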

The proof of Proposition \ref{proposition3.2} will be given in the next section.

\bigskip

%
%

\section{Proof of Uniform {\it a priori} Estimates in Proposition \ref{proposition3.2}}
\setcounter{equation}{0}

We now give the proof of Proposition \ref{proposition3.2} for the uniform $H^2$ {\it a priori} estimates. First in subsection 4.1 we  derive the lower order  $L^2-$estimates in Lemma \ref{lemma4.1}, where the periodicity of the domain $\Omega$ in $x_2$ and $x_3$ directions and the original non-scaled spatial variables are crucial. Then in subsection 4.2 and subsection 4.3,   the first-order and second-order derivative estimates are proved in Lemma \ref{lemma4.2} and Lemma \ref{lemma4.3}, respectively, thanks to  the cancelations between the flux terms,  which are quite different from the two-dimensional case in \cite{LWW2}.
We remark that the decay rate  is determined by the nonlinear flux terms in the original variables for the 3D limit here,  but  by the error terms   in the scaled variables for the 2D case of \cite{LWW2}.


\subsection{Lower order estimates}

We start with the lower order  $L^2-$relative entropy estimates.
\begin{lemma} \label{lemma4.1}
 Under the assumption of Proposition \ref{proposition3.2}, there exists a positive constant $C_T$ independent of $\varepsilon$ such that for $0 \leq t \leq t_1(\varepsilon)$,
  \begin{align}
    \begin{aligned} \label{LEM4.1}
    &\sup_{0 \leq t \leq t_1(\varepsilon)} \| (\varphi, \Psi, \xi)(t) \|^2 + \int_{0}^{t_1(\varepsilon)}\left[ \|\bar{v}_{1x_1}^{1/2} (\varphi, \psi_1, \xi) \|^2 + \varepsilon \|(\nabla \Psi, \nabla\xi)\|^2 \right] dt \\
    &\quad \leq C_T \frac{\varepsilon^4}{\delta^7} + C \| (\varphi_0, \Psi_0, \xi_0) \|^2 = C_T \varepsilon^{\frac{17}{6}}|\ln\varepsilon|^{-7} + C \| (\varphi_0, \Psi_0, \xi_0) \|^2.
    \end{aligned}
  \end{align}
\end{lemma}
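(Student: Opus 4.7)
The plan is to run a relative-entropy (or relative-energy) argument adapted to the compressible Navier-Stokes-Fourier system around the composite profile $(\tilde\rho,\tilde{\bv},\tilde\theta)$. Concretely, I would introduce the functional
\[
\mathcal{E}(t)=\int_{\Omega}\Big[\tfrac{1}{2}\rho|\Psi|^{2}+R\tilde\theta\,\tilde\rho\,\Phi\!\left(\tfrac{\rho}{\tilde\rho}\right)+\tfrac{R}{\gamma-1}\,\rho\,\tilde\theta\,\Phi\!\left(\tfrac{\theta}{\tilde\theta}\right)\Big]\,dx,
\]
with $\Phi(s)=s-1-\ln s\geq 0$. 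Thanks to the uniform upper and lower bounds of $\tilde\rho,\tilde\theta$ from \eqref{t-rho}-\eqref{t-theta} and of $\rho,\theta$ from the a priori bound \eqref{db}, this functional is pointwise equivalent to $|(\varphi,\Psi,\xi)|^{2}$, so controlling $\mathcal{E}(t)$ is equivalent to the desired $\|(\varphi,\Psi,\xi)(t)\|^{2}$ bound.

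The main step is to differentiate $\mathcal{E}(t)$ in time, use the perturbation system \eqref{REF}, and integrate by parts; the periodicity of $\Omega$ in $(x_{2},x_{3})$ kills all boundary terms in those directions, and the decay of $(\varphi,\Psi,\xi)$ as $|x_{1}|\to\infty$ (ensured by the far field behaviour and the functional-space setup) kills the $x_{1}$ boundary terms. After this integration by parts I expect three families of terms. First, the favorable production terms from the expanding rarefaction: $c\int\bar v_{1x_{1}}\bigl(\varphi^{2}+\psi_{1}^{2}+\xi^{2}\bigr)\,dx$, coming from the coupling between the pressure gradient and the profile derivative $\tilde v_{1x_{1}}$ (via the identities that define the $3$-Riemann invariants, giving positivity as in the classical Matsumura-Nishihara/Kawashima analyses). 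Second, the viscous dissipation $\varepsilon\int(\mu|\nabla\Psi|^{2}+(\mu+\lambda)(\div\Psi)^{2}+\kappa|\nabla\xi|^{2}/\theta)$, also nonnegative. Third, the bad terms: (i) forcing from the hyperbolic-wave construction $\varepsilon(-\bar v_{1}z_{1}+z_{2})_{x_{1}x_{1}}/\tilde\rho$ and $F_{1}$; (ii) the error terms $Q_{1},Q_{2},F_{2}$ from \eqref{Q1}-\eqref{Q2} and \eqref{F12}; (iii) the genuinely nonlinear perturbation interactions involving $\Psi\cdot\nabla(\varphi,\Psi,\xi)$ etc.

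For the bad terms I would apply Cauchy-Schwarz together with the estimates of Lemmas \ref{lemma2.2}-\ref{lemma2.3}, and absorb the $\|(\varphi,\psi_{1},\xi)\|_{\bar v_{1x_{1}}}$ and $\varepsilon\|(\nabla\Psi,\nabla\xi)\|^{2}$ pieces into the good terms. The terms linear in $(\varphi,\Psi,\xi)$ are bounded by $\eta\,\mathcal{E}(t)+C_{\eta}\|F\|^{2}$ for small $\eta$, and the $L^{2}$ norms of the sources are controlled by the hyperbolic wave estimates $\|\partial_{x_{1}}^{k}\mathbf{z}\|_{L^{2}}\leq C_{T}\varepsilon/\delta^{k+1}$, which, after time integration and using the choice $\delta=\varepsilon^{1/6}|\ln\varepsilon|$, produce exactly the claimed rate $C_T\,\varepsilon^{4}/\delta^{7}=C_{T}\varepsilon^{17/6}|\ln\varepsilon|^{-7}$. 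The nonlinear interactions are controlled by the small-amplitude assumption $\|(\varphi,\Psi,\xi)\|_{L^{\infty}}\ll 1$ from \eqref{PA}, so that $\int\Psi\cdot\nabla\Psi\cdot\Psi\lesssim \|\Psi\|_{L^{\infty}}\|\nabla\Psi\|^{2}$ etc. are absorbed by the viscous dissipation. A final Gronwall argument on $\mathcal{E}(t)$ then closes the estimate \eqref{LEM4.1}.

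The main obstacle I anticipate is the treatment of the source $\varepsilon\bigl((-\bar v_{1}z_{1}+z_{2})/\tilde\rho\bigr)_{x_{1}x_{1}}$ in the $\Psi$-equation together with $F_{1}$ in the $\xi$-equation: since these carry two derivatives of the hyperbolic wave, they scale like $\varepsilon\cdot\varepsilon/\delta^{3}=\varepsilon^{2}/\delta^{3}$ in $L^{2}$, which is $\gg\varepsilon^{2}$ and would kill the rate if estimated naively. The trick is that when we pair them against $\Psi$ or $\xi$ we should either integrate by parts once in $x_{1}$, transferring one derivative onto $\Psi_{x_{1}}$ or $\xi_{x_{1}}$ (benefiting from the viscous good term $\varepsilon\|\nabla\Psi\|^{2}+\varepsilon\|\nabla\xi\|^{2}$ via Young with coefficient $\varepsilon$), or exploit the cancellation built into the profile equation \eqref{AW} (namely that $\varepsilon\bar v_{1x_{1}x_{1}}$ has already been absorbed by the hyperbolic wave construction); either way one gains an extra power of $\varepsilon/\delta$, yielding the $\varepsilon^{4}/\delta^{7}$ rate. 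The other delicate point is the lack of any spatial dissipation for $\varphi$, which forces the weighted piece $\|\bar v_{1x_{1}}^{1/2}\varphi\|^{2}$ to come purely from the rarefaction expansion coupling; this is precisely where the one-dimensional structure of the profile $(\tilde\rho,\tilde\theta)(t,x_{1})$ and the periodicity in $(x_{2},x_{3})$ are used crucially, since the tangential dependence of $\varphi$ averages out.
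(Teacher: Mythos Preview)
Your proposal is essentially the same relative-entropy approach the paper uses: the functional $\mathcal{E}(t)$ you wrote down is (up to the harmless swap $\Phi(\rho/\tilde\rho)\leftrightarrow\Phi(\tilde\rho/\rho)$) exactly the paper's $\int\eta_*\,dx$, and the identification of the good rarefaction-production term $\bar v_{1x_1}(\varphi^2+\psi_1^2+\xi^2)$, the viscous dissipation, and the six families of error terms $I_1$--$I_6$ is correct.

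One minor correction: your diagnosis of the ``main obstacle'' is off. The term $\varepsilon\bigl((-\bar v_1 z_1+z_2)/\tilde\rho\bigr)_{x_1x_1}$ paired against $\psi_1$ needs no integration by parts or special cancellation; direct Cauchy--Schwarz gives $\tfrac{1}{20}\sup_t\|\psi_1\|^2+C_T\varepsilon^2\|(\cdot)_{x_1x_1}\|_{L^2}^2\le \tfrac{1}{20}\sup_t\|\psi_1\|^2+C_T\varepsilon^4/\delta^6$, which is \emph{better} than the target rate. The term that actually saturates $\varepsilon^4/\delta^7$ is the quadratic-in-$\mathbf{z}$ error $Q_1$ (and analogously $Q_2$): since $Q_1=O(|\mathbf{z}||\mathbf{z}_{x_1}|+\bar v_{1x_1}|\mathbf{z}|^2)$, one has $\int_0^t\|Q_1\|_{L^2}^2\,dt\le C_T\varepsilon^4/\delta^7$ by Lemma~\ref{lemma2.3}. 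Likewise, the terms $I_2,I_5,I_6$ containing $\varepsilon\bar v_{1x_1x_1}$, $\varepsilon\bar\theta_{x_1x_1}$, or $z_{x_1}$ multiplied by quadratic perturbation quantities are handled not by Gronwall but by the factor $C_T\varepsilon^{2/3}|\ln\varepsilon|^{-2}$ (or $C_T\varepsilon^{7/12}|\ln\varepsilon|^{-5/2}$) in front of $\sup_t\|(\varphi,\Psi,\xi)\|^2$, which is absorbed into the left-hand side for $\varepsilon$ small. With these adjustments your outline goes through verbatim.
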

\begin{proof}
	For ideal polytropic flows,  
	\[
	S = -R\ln\rho + \frac{R}{\gamma-1}\ln\theta + \frac{R}{\gamma-1}\ln\frac{R}{A}, \quad
	p = R\rho\theta = A\rho^{\gamma}\exp\Big(\frac{\gamma-1}{R}S\Big).
	\]
	Denote
	\[
	\begin{array}{l}
	\displaystyle \mathbf{X}=\Big(\rho, \rho v_1, \rho v_2, \rho v_3, \rho \Big(\frac{R}{\gamma-1}\theta + \frac{|\bv|^2}{2}\Big)\Big)^\top, \\
	\displaystyle \mathbf{Y}=\Big(\rho\bv, \rho\bv v_1 + p\mathbb{I}_1, \rho\bv v_2 + p\mathbb{I}_2, \rho\bv v_3 + p\mathbb{I}_3, \rho\bv\Big(\frac{R}{\gamma-1}\theta + \frac{|\bv|^2}{2}\Big) + p\bv\Big)^\top,
	\end{array}
	\]
	where $\mathbb{I}_1=(1,0,0)^\top, \mathbb{I}_2=(0,1,0)^\top$ and $\mathbb{I}_3=(0,0,1)^\top$.
	Then we can rewrite the system \eqref{NS} as
	\[
	\mathbf{X}_t + \div\mathbf{Y} =
	\left(
	\begin{array}{c}
	0\\
	\displaystyle \mu\varepsilon\triangle v_1 + (\mu + \lambda)\varepsilon\div\bv_{x_1} \\[2mm]
	\displaystyle \mu\varepsilon\triangle v_2 + (\mu + \lambda)\varepsilon\div\bv_{x_2} \\[2mm]
	\displaystyle \mu\varepsilon\triangle v_3 + (\mu + \lambda)\varepsilon\div\bv_{x_3} \\[2mm]
	\displaystyle \kappa\varepsilon\triangle\theta + \div(\bv\mathbb{S})
	\end{array}
	\right).
	\]
Define a relative entropy-entropy flux pair $(\eta_*, \bq_*)$ as
	\[
	\left\{
	\begin{array}{l}
	\displaystyle \eta_* = \tilde{\theta}\left\{ -\rho S + \tilde\rho\tilde{S} + \nabla_\mathbf{X}(\rho
	S)\Big|_{\mathbf{X}=\tilde{\mathbf{X}}}\cdot(\mathbf{X} - \tilde{\mathbf{X}}) \right\}, \\[4mm]
	\displaystyle q_{*j} = \tilde\theta\left\{ -\rho v_j S + \tilde\rho\tilde{v}_j\tilde{S} + \nabla_\mathbf{X}(\rho
	S)\Big|_{\mathbf{X}=\tilde{\mathbf{X}}}\cdot(\mathbf{Y}_j - \tilde{\mathbf{Y}}_j) \right\} \quad j=1,2,3.
	\end{array}
	\right.
	\]
From the following
	\[
	\displaystyle (\rho S)_{\rho} = S + \frac{|\bv|^2}{2\theta} - \frac{R\gamma}{\gamma-1}, \qquad
	\displaystyle (\rho S)_{m_i} = -\frac{v_i}{\theta},~i=1,2,3, \qquad
	\displaystyle (\rho S)_{\mathcal{E}} = \frac{1}{\theta},
	\]
	with $m_i=\rho v_i~(i=1,2,3)$ and $\mathcal{E}=\rho\left(\frac{R}{\gamma-1}\theta + \frac{|\bv|^2}{2}\right)$, we have
	\[
	\left\{\begin{aligned}
	\eta_*& = \frac{R}{\gamma-1}\rho\theta - \tilde{\theta}\rho S + \rho\left[\left(\tilde{S} - \frac{R\gamma}{\gamma-1}\right)\tilde{\theta} + \frac{|\bv - \tilde{\bv}|^2}{2}\right] + R\tilde{\rho}\tilde{\theta} \\[1mm]
	&= R\rho\tilde{\theta} \Phi\Big(\frac{\tilde{\rho}}{\rho}\Big) + \frac{R}{\gamma-1}\rho\tilde{\theta} \Phi\Big(\frac{\theta}{\tilde{\theta}}\Big) + \frac12\rho|\bv-\tilde{\bv}|^2, \\[2mm]
	\bq_*& = \bv\eta + R(\bv - \tilde{\bv})(\rho\theta - \tilde{\rho}\tilde{\theta}),
	\end{aligned}
	\right.
	\]
	where $\Phi(\cdot)$ is the strictly convex function
	\[
	\Phi(s) = s - \ln s - 1.
	\]
	Then, for $\mathbf{X}$ in any closed bounded region in $\Xi = \{\mathbf{X}:\rho>0,\theta>0\}$, there exists a positive constant $C_0$ such that
	\[
	C_0^{-1}|(\varphi, \Psi, \xi)|^2 \leq \eta_* \leq C_0|(\varphi, \Psi, \xi)|^2.
	\]
	A straightforward  computation shows that
	\begin{equation}\label{01}
	\begin{array}{l}
	\di \eta_{*t} + \div \bq_* - \div\Big[\Psi(2\mu\varepsilon \mathbb{D}(\Psi) + \lambda\varepsilon\div\Psi\mathbb{I}) + \frac{\kappa\varepsilon\xi}{\theta}\nabla\xi\Big] +
	\frac{\tilde\theta}{\theta}\left(\frac{\mu\varepsilon}{2}|\nabla\Psi + (\nabla\Psi)^\top|^2 + \lambda\varepsilon(\div \Psi)^2\right) \\[3mm]
	\di \quad + \frac{\kappa\varepsilon\tilde\theta}{\theta^2}|\nabla\xi|^2 + \tilde{v}_{1x_1}\Big[ \rho\psi_1^2 + R(\gamma - 1)\rho\tilde{\theta}\Phi\Big(\frac{\tilde{\rho}}{\rho}\Big) + R\rho\tilde{\theta}\Phi\Big(\frac{\theta}{\tilde{\theta}}\Big)\Big] + \tilde{\theta}_{x_1}\rho\psi_1\Big(R\ln\frac{\tilde{\rho}}{\rho} + \frac{R}{\gamma - 1}\ln\frac{\theta}{\tilde{\theta}}\Big) \\[4mm]
	\di= \frac{2\tilde{v}_{1x_1}}{\theta}\xi(2\mu\varepsilon\psi_{1x_1} + \lambda\varepsilon\div\Psi) + \frac{\kappa\varepsilon}{\theta^2}\tilde{\theta}_{x_1}\xi\xi_{x_1} - \frac{(2\mu+\lambda)\varepsilon}{\tilde{\rho}}\bar{v}_{1x_1x_1}\varphi\psi_1 \\[3mm]
	\di \quad + (2\mu+\lambda)\varepsilon\Big(\frac{-\bar{v}_1z_1 + z_2}{\tilde{\rho}}\Big)_{x_1x_1}\psi_1 - \rho\psi_1Q_1 + \Big(F_1+F_2-\frac{\rho}{\tilde\rho}Q_2\Big)\frac{\xi}{\theta} \\[4mm]
	\di \quad + \frac{\rho}{\tilde{\rho}}\Big[(\gamma - 1)\Phi\Big(\frac{\tilde{\rho}}{\rho}\Big) - \Phi\Big(\frac{\tilde{\theta}}{\theta}\Big)\Big]\left[\kappa\varepsilon\bar{\theta}_{x_1x_1} + (2\mu + \lambda)\varepsilon\bar{v}_{1x_1}^2 + Q_2\right].
	\end{array}
	\end{equation}
	There exists a positive constant $C>0$ such that
	\[
	\begin{array}{ll}
	\di \tilde{v}_{1x_1}\Big[ \rho\psi_1^2 + R(\gamma - 1)\rho\tilde{\theta}\Phi\Big(\frac{\tilde{\rho}}{\rho}\Big) + R\rho\tilde{\theta}\Phi\Big(\frac{\theta}{\tilde{\theta}}\Big) \Big]  + \tilde{\theta}_{x_1}\rho\psi_1\Big(R\ln\frac{\tilde{\rho}}{\rho} + \frac{R}{\gamma - 1}\ln\frac{\theta}{\tilde{\theta}}\Big) \\[4mm]
	\di \geq 2C^{-1} \bar v_{1x_1}(\varphi^2 + \psi_1^2 + \xi^2) + \Big(\frac{-\bar{v}_1z_1 + z_2}{\tilde{\rho}}\Big)_{x_1}\Big[\rho\psi_1^2 + R(\gamma - 1)\rho\tilde{\theta}\Phi\Big(\frac{\tilde{\rho}}{\rho}\Big) + R\rho\tilde{\theta}\Phi\Big(\frac{\theta}{\tilde{\theta}}\Big)\Big] \\[4mm]
	\di + (\gamma - 1)\bar{v}_{1x_1}\Big[\Big(\frac{\big(\frac12\bar v_1^2 - \frac{R}{\gamma - 1}\bar{\theta}\big)z_1 - \bar{v}_1z_2 + z_3}{\tilde{\rho}} - \frac{(-\bar{v}_1z_1 + z_2)^2}{2\tilde{\rho}^2}\Big)\Big((\gamma - 1)\rho \Phi\Big(\frac{\tilde{\rho}}{\rho}\Big) + \rho \Phi\Big(\frac{\theta}{\tilde{\theta}}\Big)\Big)\Big] \\[5mm]
	\di + \Big[\frac{1}{\tilde{\rho}}\Big(\big(\frac12\bar v_1^2 - \frac{R}{\gamma - 1}\bar{\theta}\big)z_1 - \bar{v}_1z_2 + z_3\Big) - \frac{(-\bar{v}_1z_1 + z_2)^2}{2\tilde{\rho}^2}\Big]_{x_1}\rho\psi_1\Big((\gamma - 1)\ln\frac{\tilde\rho}{\rho} + \ln\frac{\theta}{\tilde{\theta}}\Big)\\[5mm]
	\di \geq C^{-1} \bar v_{1x_1}(\varphi^2 + \psi_1^2 + \xi^2) -C|(z_{1x_1},z_{2x_1},z_{3x_1})|(\varphi^2 + \psi_1^2 + \xi^2),
	\end{array}
	\]
	provided that $\varepsilon$ is sufficiently small.
	Integrating \eqref{01} over $[0, ~t]\times\Omega$ and using the above relation imply that
	\begin{align}
	\begin{aligned} \label{02}
	&\|(\varphi, \Psi, \xi)(t)\|^2 + \int_{0}^{t} \big[\|\bar{v}_{1x_1}^{1/2} (\varphi, \psi_1, \xi) \|^2 + \varepsilon\|(\nabla \Psi, \nabla\xi)\|^2\big] dt \\
	&\leq C \| (\varphi_0, \Psi_0, \xi_0) \|^2 + C \Big|\int_{0}^{t}\int_{\Omega} \Big[\frac{2\tilde{v}_{1x_1}}{\theta}\xi(2\mu\varepsilon\psi_{1x_1} + \lambda\varepsilon\div\Psi) + \frac{\kappa\varepsilon}{\theta^2}\tilde{\theta}_{x_1}\xi\xi_{x_1}\Big] dxdt \Big| \\
	& \quad + C \Big|\int_{0}^{t}\int_{\Omega} \Big[ \frac{(2\mu+\lambda)\varepsilon}{\tilde{\rho}}\bar{v}_{1x_1x_1}\varphi\psi_1 \Big] dxdt \Big|\\
	&\quad + C \Big|\int_{0}^{t}\int_{\Omega} \Big[(2\mu+\lambda)\varepsilon\Big(\frac{-\bar{v}_1z_1 + z_2}{\tilde{\rho}}\Big)_{x_1x_1}\psi_1\Big] dxdt \Big|\\
	&\quad + C \Big|\int_{0}^{t}\int_{\Omega} \Big[- \rho\psi_1Q_1+ \Big(F_1+F_2-\frac{\rho}{\tilde\rho}Q_2 \Big)\frac{\xi}{\theta}\Big] dxdt \Big|\\
	&\quad + C \Big|\int_{0}^{t}\int_{\Omega} \Big[ \frac{\rho}{\tilde{\rho}}\Big[(\gamma - 1)\Phi\Big(\frac{\tilde{\rho}}{\rho}\Big) - \Phi\Big(\frac{\tilde{\theta}}{\theta}\Big)\Big]\left[\kappa\varepsilon\bar{\theta}_{x_1x_1} + (2\mu + \lambda)\varepsilon\bar{v}_{1x_1}^2 + Q_2\right]\Big] dxdt \Big|\\
	&\di \quad + C \Big|\int_{0}^{t}\int_{\Omega} \Big[|(z_{1x_1},z_{2x_1},z_{3x_1})|(\varphi^2 + \psi_1^2 + \xi^2)\Big] dxdt \Big|\\
	&:=C \| (\varphi_0, \Psi_0, \xi_0) \|^2 +\sum_{i=1}^6 I_i.
	\end{aligned}
	\end{align}
  First, it follows from the Young  inequality, Lemma \ref{lemma2.2} and Lemma \ref{lemma2.3} that
  \begin{align*}
  & I_1\leq \frac{\varepsilon}{20} \int_{0}^{t} \|(\nabla\Psi,\nabla\xi)\|^2 dt + C\varepsilon\int_{0}^{t} \|(\tilde{v}_{1x_1},\tilde\theta_{x_1})\xi\|^2 dt\\
  &\quad \leq \frac{\varepsilon}{20} \int_{0}^{t} \|(\nabla\Psi,\nabla\xi)\|^2 dt +
   C\varepsilon\int_{0}^{t}\|\bar{v}_{1x_1}\xi\|^2 dt + C\varepsilon\int_{0}^{t} \|(z_{1x_1}, z_{2x_1}, z_{3x_1})\xi\|^2 dt \\
  &\quad \leq \frac{\varepsilon}{20} \int_{0}^{t} \|(\nabla\Psi,\nabla\xi)\|^2 dt + C\varepsilon\sup_{0 \leq t \leq T}\|\bar{v}_{1x_1}\|_{L^\infty}\int_{0}^{t} \|\bar{v}_{1x_1}^{1/2}\xi\|^2 dt+ C_T\frac{\varepsilon^3}{\delta^5}\sup_{0 \leq t \leq t_1(\varepsilon)}\|\xi\|^2 \\
  &\quad\leq \frac{\varepsilon}{20} \int_{0}^{t} \|(\nabla\Psi,\nabla\xi)\|^2 dt + C\varepsilon^{\frac{5}{6}}|\ln\varepsilon|^{-1}\int_{0}^{t} \|\bar{v}_{1x_1}^{1/2}\xi\|^2 dt + C_T\varepsilon^{\frac{13}{6}}|\ln\varepsilon|^{-5}\sup_{0 \leq t \leq t_1(\varepsilon)}\|\xi\|^2,
  \end{align*}
and also
  \begin{align*}
  & I_2 \leq C_T\varepsilon\sup_{0 \leq t \leq T}\|\bar{v}_{1x_1x_1}\|_{L^\infty}\sup_{0 \leq t \leq t_1(\varepsilon)}\|(\varphi, \psi_1)\|^2
  \leq C_T\varepsilon^{\frac{2}{3}}|\ln\varepsilon|^{-2}\sup_{0 \leq t \leq t_1(\varepsilon)}\|(\varphi, \psi_1)\|^2,\\
   &I_3 \leq \frac{1}{20}\sup_{0 \leq t \leq t_1(\varepsilon)} \|\psi_1\|^2 + C_T \varepsilon^2 \int_{0}^{t}\int_{\bbr} \Big|\Big(\frac{-\bar{v}_1z_1 + z_2}{\tilde{\rho}}\Big)_{x_1x_1}\Big|^2 dx_1dt \\
   &\quad \leq \frac{1}{20}\sup_{0 \leq t \leq t_1(\varepsilon)} \|\psi_1\|^2 + C_T \frac{\varepsilon^4}{\delta^6} \leq \frac{1}{20}\sup_{0 \leq t \leq t_1(\varepsilon)} \|\psi_1\|^2 + C_T \varepsilon^{3}|\ln\varepsilon|^{-6}.
  \end{align*}
By \eqref{Q1} and \eqref{Q2}, it holds that
\begin{align*}
&\Big|\int_{0}^{t}\int_{\Omega} \rho\psi_1Q_1 dxdt \Big|
\leq \frac{1}{20}\sup_{0 \leq t \leq t_1(\varepsilon)} \|\psi_1\|^2 + C_T \int_{0}^{t}\int_{\bbr} \Big|\Big[\frac{(\bar{v}_1z_1 - z_2)^2}{\tilde{\rho}}\Big]_{x_1}\Big|^2 dx_1dt \\
&\qquad \leq \frac{1}{20}\sup_{0 \leq t \leq t_1(\varepsilon)} \|\psi_1\|^2 + C_T \frac{\varepsilon^4}{\delta^7} \leq \frac{1}{20}\sup_{0 \leq t \leq t_1(\varepsilon)} \|\psi_1\|^2 + C_T \varepsilon^{\frac{17}{6}}|\ln\varepsilon|^{-7}.
\end{align*}
Then we can obtain the estimate of $I_4$,
\begin{align*}
I_4 \leq \frac{1}{20}\sup_{0 \leq t \leq t_1(\varepsilon)} \|(\varphi, \psi_1, \xi)\|^2 + C_T \varepsilon^{\frac{17}{6}}|\ln\varepsilon|^{-7}.
\end{align*}
Moreover, it holds that
$$
\begin{array}{ll}
\di I_5 \leq C\int_0^t (\varepsilon\|(\bar\theta_{x_1x_1}, \bar v_{1x_1}^2)\|_{L^\infty} + \|Q_2\|_{L^\infty}) \|(\varphi,\xi)\|^2 dt\leq C_T \varepsilon^{\frac{2}{3}}|\ln\varepsilon|^{-2}\sup_{0 \leq t \leq t_1(\varepsilon)}\|(\varphi, \xi)\|^2.
\end{array}
$$
By Lemma \ref{lemma2.2} and Lemma \ref{lemma2.3}, one has
\begin{align*}
&I_6 \leq C_T \sup_{0 \leq t \leq T}\|(z_{1x_1},z_{2x_1}, z_{3x_1})\|_{L^\infty(\bbr)} \sup_{0 \leq t \leq t_1(\varepsilon)} \|(\varphi, \psi_1, \xi)\|^2 \\
&\quad \leq C_T \varepsilon^{\frac{7}{12}}|\ln\varepsilon|^{-\frac{5}{2}} \sup_{0 \leq t \leq t_1(\varepsilon)} \|(\varphi, \psi_1, \xi)\|^2.
\end{align*}
Substituting the above estimates for $I_i~(i=1,2,\cdots,6)$ into \eqref{02} and taking $\varepsilon$ suitably small,
we can prove \eqref{LEM4.1} in Lemma \ref{lemma4.1}.
\end{proof}

\begin{remark}
If the hyperbolic scaled variables for the space and time, i.e., $\frac x\varepsilon$ and $\frac t \varepsilon$, are still used to normalize the dissipation coefficients to be $O(1)$ order as in our previous work \cite{LWW2} for the two-dimensional limit case, then the exactly same proof as in \cite{LWW2} could not be applied here due to the spatial 3D setting. Alternatively, the {\it a priori} estimates here would be carried out for the original non-scaled variables $(x,t)$ and then the dissipation terms are more singular compared with the two-dimensional scaled case in \cite{LWW2}. Consequently more accurate {\it a priori} assumptions with respect to the dissipations as in \eqref{PA} are crucially needed and some new observations on the cancellations of the physical structures for the flux terms and viscous terms are essentially used to justify the 3D limit.
\end{remark}
\begin{remark}
	The hyperbolic wave $(z_1, z_2, z_3)$ are crucially utilized in Lemma \ref{lemma4.1}. Without this hyperbolic wave and using only the planar rarefaction wave $(\bar\rho, \bar v_1, \bar{\theta})$ as the profile, the decay rate of the error terms in Lemma \ref{lemma4.1} would not be good enough, and we could not achieve the  estimates that are uniform in  the dissipation coefficients, 
		which is quite different from the   1D case where the hyperbolic wave is not necessary in order to justify the limit process.	 
%
\end{remark}

\subsection{First-order derivative estimates}

In this subsection, we derive the first-order derivative estimates, which are quite different from our previous paper for the 2D case in \cite{LWW2}. In order to obtain the decay rate, here we mainly apply the cancellations between the flux terms in the mass equation, momentum equation and the energy equation in \eqref{NS} due to the physical structures of the system. While for the 2D case in our previous work \cite{LWW2}, we crucially used the cancellations between the flux terms and viscosity terms since both the flux and viscous terms are the same order in the scaled independent variables $\frac{x_i}{\varepsilon}$ and $\frac t\varepsilon$.
\begin{lemma} \label{lemma4.2}
Under the assumption of Proposition \ref{proposition3.2}, there exists a positive constant $C_T$ independent of $\varepsilon$,  such that for $0\leq t \leq t_1(\varepsilon)$,
  \begin{align}
	\begin{aligned} \label{LEM4.2}
 	  &\sup_{0 \leq t \leq t_1(\varepsilon)} \| (\nabla\varphi, \nabla\Psi, \nabla\xi)(t) \|^2 + \int_{0}^{t_1(\varepsilon)} \Big[\|\bar{v}_{1x_1}^{1/2} \nabla\varphi\|^2 + \varepsilon\|(\nabla^2\Psi, \nabla^2\xi)\|^2 \Big]dt \\
 	  &\leq C_T\frac{\varepsilon^3}{\delta^8} + C \| (\nabla\varphi_0, \nabla\Psi_0, \nabla\xi_0) \|^2
 = C_T \varepsilon^{\frac{5}{3}}|\ln\varepsilon|^{-8} + C \| (\nabla\varphi_0, \nabla\Psi_0, \nabla\xi_0) \|^2.
	\end{aligned}
  \end{align}
\end{lemma}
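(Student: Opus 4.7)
The plan is to differentiate \eqref{REF} once in each spatial direction and carry out a weighted $L^2$ energy estimate that exploits the conservative flux structure of the full Navier--Stokes--Fourier system, in the same spirit as, but technically distinct from, the lower-order relative-entropy argument of Lemma \ref{lemma4.1}.

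Concretely, I would apply $\partial_{x_i}$ ($i=1,2,3$) to \eqref{REF}, multiply the resulting mass, momentum, and energy equations by $\frac{R\theta}{\rho}\partial_i\varphi$, $\partial_i\Psi$, and $\frac{R}{(\gamma-1)\theta}\partial_i\xi$ respectively, sum over $i$, and integrate over $\Omega$. After an integration by parts in $x$, the flux cross term $\int R\theta\,\partial_i\Psi\cdot\partial_i\nabla\varphi\,dx$ from the pressure gradient in the momentum equation cancels exactly the $\int R\theta\,\partial_i\varphi\,\div\partial_i\Psi\,dx$ generated by the continuity equation under the weight $\frac{R\theta}{\rho}\partial_i\varphi$; similarly, the cross term $\int R\rho\,\partial_i\Psi\cdot\partial_i\nabla\xi\,dx$ from the thermal pressure cancels $\int R\rho\,\partial_i\xi\,\div\partial_i\Psi\,dx$ from the energy equation under the weight $\frac{1}{\theta}\partial_i\xi$. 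These cancellations rely only on the physical flux structure of \eqref{NS}; in the non-scaled variables used here the viscous terms are too singular to allow the flux/viscosity cancellation scheme of the scaled 2D case in \cite{LWW2}, so the flux/flux cancellation above is what makes the 3D analysis feasible. The viscous and heat-conduction terms then contribute the dissipation $\varepsilon\|(\nabla^2\Psi,\nabla^2\xi)\|^2$ on the left, and the linearized interaction of $\nabla\varphi$ with the rarefaction profile produces the good quadratic term $\|\bar v_{1x_1}^{1/2}\nabla\varphi\|^2$ exactly as in Lemma \ref{lemma4.1}.

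The remaining error terms split into three families. \emph{Commutator terms} from moving $\partial_i$ past the variable coefficients $(\tilde\rho,\tilde v_1,\tilde\theta,\rho,\bv,\theta)$ carry either a factor $\bar v_{1x_1}$ (absorbed by the good rarefaction term with help from Lemma \ref{lemma4.1}) or a factor $|\nabla\textbf{z}|$, which by Lemma \ref{lemma2.3} is pointwise of order $\varepsilon/\delta^{5/2}$. \emph{Singular source terms} $(2\mu+\lam)\varepsilon\bar v_{1x_1x_1}$, $\kappa\varepsilon\bar\theta_{x_1x_1}$, and $(2\mu+\lam)\varepsilon(\tfrac{-\bar v_1 z_1+z_2}{\tilde\rho})_{x_1x_1}$ in \eqref{REF}, after one $\partial_{x_1}$ and pairing with $\partial_i\Psi$ or $\partial_i\xi$, are controlled by Young's inequality of the form $\varepsilon\int f\,g\,dx\leq \tfrac{\varepsilon}{2}\|f\|^2+\tfrac{\varepsilon}{2}\|g\|^2$: the $g$-side absorbs into $\varepsilon\|\nabla(\Psi,\xi)\|^2$ (harmless in Gronwall since $e^{\varepsilon T}$ is bounded), while the $f$-side contributes $\varepsilon\|\nabla^3\textbf{z}\|_{L^2}^2\leq C_T\varepsilon^3/\delta^8$ by Lemma \ref{lemma2.3} and $\varepsilon^2\int_0^T\|\bar v_{1x_1x_1x_1}\|_{L^2}^2\,dt\leq C_T\varepsilon^2/\delta^4$ by Lemma \ref{lemma2.2}; the dominant singular contribution is exactly the $C_T\varepsilon^3/\delta^8$ on the right-hand side of \eqref{LEM4.2}. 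The \emph{error-flux terms} $Q_1, Q_2, F_1, F_2$ in \eqref{Q1}--\eqref{F12}, differentiated once, are handled similarly using Lemmas \ref{lemma2.2}--\ref{lemma2.3} and give strictly subdominant contributions.

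The main obstacle will be the genuinely cubic terms such as $\int(\partial_i\bv\cdot\nabla\Psi)\,\partial_i\Psi\,dx$, $\int|\nabla\Psi+(\nabla\Psi)^\top|^2\,\partial_i\xi\,dx$, and $\int(\div\Psi)^2\,\partial_i\xi\,dx$ arising from the convection and viscous heating in \eqref{REF}, which carry no small $\varepsilon$ factor at all. Their control requires the three-dimensional Gagliardo--Nirenberg inequality on $\Omega=\bbr\times\bbt^2$ combined with the sharp a priori bound $\|\nabla^2(\varphi,\Psi,\xi)\|\leq \varepsilon^{1/4}|\ln\varepsilon|^{-1}$ from \eqref{PA}, so that quantities like $\|\nabla\Psi\|_{L^3}^3$ are absorbed into $\varepsilon\|\nabla^2\Psi\|^2$ together with terms already controlled; this is the step that ultimately fixes the exponents $a_1=\tfrac{3}{4},\ a_2=\tfrac{1}{4}$ and is the reason the 3D proof cannot be reduced to the 2D argument of \cite{LWW2}. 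Collecting all contributions and applying Gronwall's inequality together with Lemma \ref{lemma4.1} yields \eqref{LEM4.2} with the rate $C_T\varepsilon^3/\delta^8=C_T\varepsilon^{5/3}|\ln\varepsilon|^{-8}$.
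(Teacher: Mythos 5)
Your overall framework is the one the paper uses: differentiate \eqref{REF} once, take weighted $L^2$ multipliers, cancel the pressure/divergence cross terms between the mass--momentum and momentum--energy pairs, and control the pure nonlinearities using Gagliardo--Nirenberg and the sharp {\it a priori} rates in \eqref{PA} (the paper multiplies $\eqref{REF}_2$ by $-\triangle\Psi$ and $\eqref{REF}_3$ by $-\frac1\theta\triangle\xi$, which after integration by parts is the same as your differentiated-multiplier scheme). But two of your steps do not hold as stated. The smaller one: your declared energy multiplier $\frac{R}{(\gamma-1)\theta}\partial_i\xi$ does not produce the flux--flux cancellation you describe --- the momentum equation contributes $R\rho\,\partial_i\xi\,\div\partial_i\Psi$, while $\partial_i(R\rho\theta\div\Psi)$ weighted by $\frac{R}{(\gamma-1)\theta}\partial_i\xi$ gives $\frac{R^2}{\gamma-1}\rho\,\partial_i\xi\,\div\partial_i\Psi$; the weight must be $\frac{1}{\theta}\partial_i\xi$, as you yourself write two sentences later.

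The more serious gap concerns the profile source terms and, as a consequence, your identification of what dominates the rate. You list $(2\mu+\lam)\varepsilon\bar v_{1x_1x_1}$ and $\kappa\varepsilon\bar\theta_{x_1x_1}$ among the sources of \eqref{REF} and bound the former, after one derivative, by $\varepsilon^2\int_0^T\|\bar v_{1x_1x_1x_1}\|_{L^2}^2\,dt\lesssim C_T\varepsilon^2/\delta^4=\varepsilon^{4/3}|\ln\varepsilon|^{-4}$, which is strictly \emph{larger} than the target $\varepsilon^3/\delta^8=\varepsilon^{5/3}|\ln\varepsilon|^{-8}$; that estimate by itself would defeat \eqref{LEM4.2}. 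In fact these terms sit on the right of the profile system \eqref{AW}, not of \eqref{REF}: the whole point of the hyperbolic wave $\textbf{z}$ is to absorb $(2\mu+\lam)\varepsilon\bar v_{1x_1x_1}$ and $\kappa\varepsilon\bar\theta_{x_1x_1}$ at the profile level, so that what survives in \eqref{REF} is $(2\mu+\lam)\varepsilon\frac{\bar v_{1x_1x_1}}{\tilde\rho}\varphi$ and $F_2=-\kappa\varepsilon\frac{\bar\theta_{x_1x_1}}{\tilde\rho}\varphi-(2\mu+\lam)\varepsilon\frac{\bar v_{1x_1}^2}{\tilde\rho}\varphi$ from \eqref{F12}, each carrying an extra factor of $\varphi$. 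That factor is essential: it allows, e.g., $\varepsilon\int_0^t\|\bar v_{1x_1x_1}\varphi\|^2\,dt\lesssim\frac{\varepsilon}{\delta^3}\int_0^t\|\bar v_{1x_1}^{1/2}\varphi\|^2\,dt\lesssim\frac{\varepsilon}{\delta^3}\cdot\frac{\varepsilon^4}{\delta^7}$, well below the target, after invoking Lemma \ref{lemma4.1}. The direct hyperbolic-wave source $(2\mu+\lam)\varepsilon\big(\frac{-\bar v_1z_1+z_2}{\tilde\rho}\big)_{x_1x_1}$ paired with $\triangle\psi_1$ gives only $\varepsilon\|\partial_{x_1}^2\textbf{z}\|^2\lesssim\varepsilon^3/\delta^6=\varepsilon^2|\ln\varepsilon|^{-6}$, also subdominant. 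The genuine $\varepsilon^3/\delta^8$ rate comes instead from the flux/commutator terms carrying a single derivative of the profile, such as $\rho\tilde v_{1x_1}|\Psi_{x_1}|^2$, $\rho\tilde v_{1x_1}\psi_1\triangle\psi_1$, and $R\tilde\theta_{x_1}\nabla\varphi\cdot\nabla\psi_1$, each bootstrapped from the lower-order bound $\int_0^t\varepsilon\|\nabla\Psi\|^2\,dt\lesssim\varepsilon^4/\delta^7$ of Lemma \ref{lemma4.1} at the cost of an extra $1/\delta$. This is precisely the distinction the paper emphasizes between the 3D and 2D cases: here the rate is fixed by the nonlinear flux terms, not by the profile/error terms as in \cite{LWW2}. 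Your proposal needs to be rerouted through that bootstrap and through the actual form of the source terms in \eqref{REF}.
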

\begin{proof}
	Multiplying the equation $\eqref{REF}_2$ by $-\triangle\Psi$ leads to
	\begin{align}
	\begin{aligned} \label{11}
	&\Big(\rho\frac{|\nabla\Psi|^2}{2}\Big)_t - \div\Big(\rho\psi_{it}\nabla\psi_i + \rho v_i\psi_{jx_i}\nabla\psi_j - \rho\bv\frac{|\nabla\Psi|^2}{2} + (\mu + \lam)\varepsilon\div\Psi\nabla\div\Psi \\
	& \quad - (\mu + \lam)\varepsilon\div\Psi\triangle\Psi\Big) + \mu\varepsilon|\triangle\Psi|^2 + (\mu + \lam)\varepsilon|\nabla \div\Psi|^2 - R\theta\nabla\varphi\cdot\triangle\Psi - R\rho\nabla\xi\cdot\triangle\Psi \\
	& = - \varphi_{x_i}\Psi_{x_i}\cdot\Psi_t - \tilde{\rho}_{x_1}\Psi_{x_1}\cdot\Psi_t - v_i\psi_{jx_i}\nabla\rho\cdot\nabla\psi_j - \rho\psi_{jx_i}\nabla\psi_i\cdot\nabla\psi_j - \rho\tilde{v}_{1x_1}|\Psi_{x_1}|^2 \\
	& \quad + \rho\tilde{v}_{1x_1}\psi_1\triangle\psi_1 + R\tilde{\rho}_{x_1}\Big(\theta - \frac{\rho}{\tilde{\rho}}\tilde{\theta}\Big)\triangle\psi_1 - (2\mu + \lam)\varepsilon\Big(\frac{-\bar{v}_1z_1 + z_2}{\tilde{\rho}}\Big)_{x_1x_1}\triangle\psi_1 \\
	& \quad + (2\mu + \lam)\varepsilon\bar{v}_{1x_1x_1}\frac{\varphi}{\tilde{\rho}}\triangle\psi_1 + Q_1\frac{\rho}{\tilde{\rho}}\triangle\psi_1\\
&	:= I(t,x).
	\end{aligned}
	\end{align}
We now take the gradient  in the first equation of \eqref{REF} and then multiply it by $\frac{R\theta}{\rho}\nabla\varphi$ to obtain
	\begin{align}
	\begin{aligned} \label{12}
	&\Big(\frac{R\theta}{\rho}\frac{|\nabla\varphi|^2}{2}\Big)_t + \div\Big(\frac{R\theta}{\rho}\bv\frac{|\nabla\varphi|^2}{2} - R\theta\varphi_{x_i}\nabla\psi_i\Big) + (R\theta\nabla\varphi\cdot\nabla\psi_i)_{x_i} \\
	&\quad + \frac{R(\gamma - 1)\theta}{\rho}\bar{v}_{1x_1}\frac{|\nabla\varphi|^2}{2} + \frac{R\theta}{\rho}\bar{v}_{1x_1}\varphi_{x_1}^2 + R\theta\nabla\varphi\cdot\triangle\Psi \\
	&= - \frac{R(\gamma - 1)\theta}{\rho}\div\Psi\frac{|\nabla\varphi|^2}{2} - \frac{R\theta}{\rho}\varphi_{x_i}\nabla\varphi\cdot\nabla\psi_i + R\xi_{x_i}\nabla\varphi\cdot\nabla\psi_i - R\varphi_{x_i}\nabla\xi\cdot\nabla\psi_i \\
	&\quad + \frac{\gamma - 1}{\rho^2}\frac{|\nabla\varphi|^2}{2}\Big[\kappa\varepsilon\triangle\xi +\kappa\varepsilon\tilde{\theta}_{x_1x_1} + \frac{\mu\varepsilon}{2}|\nabla\Psi + (\nabla\Psi)^\top|^2 + \lambda\varepsilon(\div\Psi)^2 \\
	&\quad + 2\tilde{v}_{1x_1}(2\mu\varepsilon\psi_{1x_1} + \lambda\varepsilon \div\Psi) + (2\mu + \lambda)\varepsilon(\tilde{v}_1\tilde{v}_{1x_1})_{x_1}\Big] + R\tilde{\theta}_{x_1}\nabla\varphi\cdot\nabla\psi_1 \\
	&\quad  - R\tilde{\theta}_{x_1}\nabla\varphi\cdot\Psi_{x_1}  - \frac{R\theta}{\rho}\Big(\frac{-\bar{v}_1z_1 + z_2}{\tilde{\rho}}\Big)_{x_1}\varphi_{x_1}^2- \frac{R(\gamma - 1)\theta}{\rho}\Big(\frac{-\bar{v}_1z_1 + z_2}{\tilde{\rho}}\Big)_{x_1}\frac{|\nabla\varphi|^2}{2}\\[2mm]
	&\quad  - \frac{R\theta}{\rho}\tilde{\rho}_{x_1}\varphi_{x_1}\div\Psi- \frac{R\theta}{\rho}\tilde{\rho}_{x_1x_1}\psi_1\varphi_{x_1} - \frac{R\theta}{\rho}\tilde{\rho}_{x_1}\nabla\varphi\cdot\nabla\psi_1 - \frac{R\theta}{\rho}\tilde{v}_{1x_1x_1}\varphi\varphi_{x_1}\\[2mm]
	& := J(t, x).
	\end{aligned}
	\end{align}
	Multiplying the third equation of \eqref{REF} by $-\frac{1}{\theta}\triangle\xi$, one has
	\begin{align}
	\begin{aligned} \label{13}
	&\Big(\frac{R}{\gamma - 1}\frac{\rho}{\theta}\frac{|\nabla\xi|^2}{2}\Big)_t - \div\Big(\frac{R}{\gamma - 1}\frac{\rho}{\theta}\xi_t\nabla\xi + \frac{R}{\gamma - 1}\frac{\rho}{\theta}v_i\xi_{x_i}\nabla\xi -\frac{R}{\gamma - 1}\frac{\rho}{\theta}\bv\frac{|\nabla\xi|^2}{2} \\
	&\quad + R\rho\div\Psi\nabla\xi + R\rho\nabla\psi_i\xi_{x_i}\Big) + (R\rho\nabla\psi_i\cdot\nabla\xi)_{x_i} + \frac{\kappa\varepsilon}{\theta}|\triangle\xi|^2 + R\rho\nabla\xi\cdot\triangle\Psi \\
	&= \frac{-R}{\gamma - 1}\frac{1}{\theta}\nabla\varphi\cdot\nabla\xi\xi_t - \frac{R}{\gamma - 1}\frac{1}{\theta}\tilde{\rho}_{x_1}\xi_{x_1}\xi_t + \frac{R}{\gamma - 1}\frac{\rho}{\theta^2}|\nabla\xi|^2\xi_t + \frac{R}{\gamma - 1}\frac{\rho}{\theta^2}\tilde{\theta}_{x_1}\xi_{x_1}\xi_t \\
	&\quad - \frac{R}{\gamma - 1}\frac{1}{\theta}v_i\xi_{x_i}\nabla\rho\cdot\nabla\xi + \frac{R}{\gamma - 1}\frac{\rho}{\theta^2}v_i\xi_{x_i}\nabla\theta\cdot\nabla\xi - \frac{R}{\gamma - 1}\frac{\rho}{\theta}\xi_{x_i}\nabla\psi_i\cdot\nabla\xi \\
	&\quad - \frac{R}{\gamma - 1}\frac{\rho}{\theta}\tilde{v}_{1x_1}|\xi_{x_1}|^2 + \frac{R\rho}{\theta}\div\Psi\frac{|\nabla\xi|^2}{2} + \frac{R\rho}{\theta}\tilde{v}_{1x_1}\frac{|\nabla\xi|^2}{2} - \frac{1}{\theta^2}\frac{|\nabla\xi|^2}{2}\Big[\kappa\varepsilon\triangle\xi +\kappa\varepsilon\tilde{\theta}_{x_1x_1} \\[2mm]
	&\quad + \frac{\mu\varepsilon}{2}|\nabla\Psi + (\nabla\Psi)^\top|^2 + \lambda\varepsilon(\div\Psi)^2 + 2\tilde{v}_{1x_1}(2\mu\varepsilon\psi_{1x_1} + \lambda\varepsilon \div\Psi) + (2\mu + \lambda)\varepsilon(\tilde{v}_1\tilde{v}_{1x_1})_{x_1}\Big] \\[2mm]
	&\quad - R\div\Psi\nabla\varphi\cdot\nabla\xi - R\tilde{\rho}_{x_1}\xi_{x_1}\div\Psi + R\varphi_{x_i}\nabla\psi_i\cdot\nabla\xi + R\tilde{\rho}_{x_1}\nabla\psi_1\cdot\nabla\xi - R\nabla\varphi\cdot\nabla\psi_i\xi_{x_i} \\
	&\quad - R\tilde{\rho}_{x_1}\psi_{ix_1}\xi_{x_i} + \frac{R}{\gamma - 1}\frac{\rho}{\theta}\tilde{\theta}_{x_1}\psi_1\triangle\xi + \frac{R\rho}{\theta}\tilde{v}_{1x_1}\xi\triangle\xi - \frac{\mu\varepsilon}{2\theta}|\nabla\Psi + (\nabla\Psi)^\top|^2\triangle\xi \\
	&\quad - \frac{\lambda\varepsilon}{\theta}(\div\Psi)^2\triangle\xi - \frac{2\tilde{v}_{1x_1}}{\theta}(2\mu\varepsilon\psi_{1x_1} + \lambda\varepsilon\div\Psi)\triangle\xi - \frac{F_1 + F_2}{\theta}\triangle\xi  + \frac{\rho}{\tilde\rho \theta}Q_2\triangle\xi\\ 
	&:= K(t, x).
	\end{aligned}
	\end{align}
  Now we add \eqref{11}, \eqref{12} and \eqref{13} together, 
  then take the integration   over $[0, ~t]\times\Omega$ to obtain
  \begin{align}
    \begin{aligned} \label{14}
      &\| (\nabla\varphi, \nabla\Psi, \nabla\xi)(t) \|^2 + \int_{0}^{t} \Big[\|\bar{v}_{1x_1}^{1/2} \nabla\varphi\|^2 + \varepsilon\|(\nabla^2\Psi, \nabla^2\xi)\|^2 \Big]dt \\
      &\leq C \| (\nabla\varphi_0, \nabla\Psi_0, \nabla\xi_0) \|^2 + C \Big|\int_{0}^{t}\int_{\Omega} I(t, x) + J(t, x) + K(t, x) dxdt\Big|,
    \end{aligned}
  \end{align}
  where $I, J$ and $K$ are defined in \eqref{11}, \eqref{12} and \eqref{13}, respectively.
  
We now make estimates  on  the right hand side of \eqref{14}.   
First, it holds that
  \begin{align}
  	\begin{aligned} \label{15}
    &C\Big|\int_{0}^{t}\int_{\Omega} \frac{R\theta}{\rho}\varphi_{x_i}\nabla\varphi\cdot\nabla\psi_i dxdt\Big|
    \leq C \int_{0}^{t} \|\nabla\varphi\|\|\nabla\varphi\|_{L^4}\|\nabla\Psi\|_{L^4} dt \\
    & \leq C \int_{0}^{t} \|\nabla\varphi\|\|\nabla\varphi\|^{1/4}\|\nabla\varphi\|_1^{3/4}\|\nabla\Psi\|^{1/4}\|\nabla\Psi\|_1^{3/4} dt \\
    & \leq \frac{\varepsilon}{160}\int_{0}^{t} \|\nabla\Psi\|_1^2 dt + C\varepsilon^{-3/5} \int_{0}^{t} \|\nabla\varphi\|^2\|\nabla\varphi\|_1^{6/5}\|\nabla\Psi\|^{2/5} dt \\
    & \leq \frac{\varepsilon}{160}\int_{0}^{t} \|\nabla\Psi\|_1^2 dt + C_T\ \varepsilon^{-\frac{3}{5} + \frac{2}{5}a_1 + \frac{6}{5}a_2}|\ln\varepsilon|^{-8/5}\sup_{0 \leq t \leq t_1(\varepsilon)}\|\nabla\varphi\|^2,
   \end{aligned}
  \end{align}
  where we have used H\"{o}lder's inequality, Sobolev's inequality and Young's inequality and in the last inequality we fully used the {\it a priori} assumptions \eqref{PA} which is one of the key points in the present paper.
  Similarly, the {\it a priori} assumptions \eqref{PA} will be utilized to estimate the nonlinear terms in the sequel. Then one has
  \begin{align*}
  &C\Big|\int_{0}^{t}\int_{\Omega} \frac{(\gamma - 1)\kappa\varepsilon}{\rho^2}\frac{|\nabla\varphi|^2}{2}\triangle\xi dxdt\Big|
   \leq \frac{\varepsilon}{160}\int_{0}^{t} \|\nabla^2\xi\|^2 dt + C\varepsilon \int_{0}^{t} \|\nabla\varphi\|_{L^4}^4 dt \\
  & \leq \frac{\varepsilon}{160}\int_{0}^{t} \|\nabla^2\xi\|^2 dt + C\varepsilon \int_{0}^{t} \|\nabla\varphi\|\|\nabla\varphi\|_1^3 dt \\
  & \leq \frac{\varepsilon}{160}\int_{0}^{t} \|\nabla^2\xi\|^2 dt + \frac{1}{160}\sup_{0 \leq t \leq t_1(\varepsilon)}\|\nabla\varphi\|^2 + C_T\varepsilon^2\sup_{0 \leq t \leq t_1(\varepsilon)}\|\nabla\varphi\|_1^6 \\
  & \leq \frac{\varepsilon}{160}\int_{0}^{t} \|\nabla^2\xi\|^2 dt + \frac{1}{160}\sup_{0 \leq t \leq t_1(\varepsilon)}\|\nabla\varphi\|^2 + C_T\ \varepsilon^2 (\varepsilon^{1/4}|\ln\varepsilon|^{-1})^6.
  \end{align*}
By Lemma \ref{lemma2.2} and Lemma \ref{lemma2.3}  we have 
  \begin{align*}
  &C\Big|\int_{0}^{t}\int_{\Omega} \frac{(\gamma - 1)\kappa\varepsilon}{\rho^2}\tilde{\theta}_{x_1x_1}\frac{|\nabla\varphi|^2}{2} dxdt\Big|
  \leq C\varepsilon \int_{0}^{t} \|\tilde{\theta}_{x_1x_1}\|_{L^\infty}\|\nabla\varphi\|^2 dt \\
  & \leq C_T\varepsilon\left(\frac{1}{\delta^2} + \frac{\varepsilon}{\delta^{7/2}}\right)\sup_{0 \leq t \leq t_1(\varepsilon)}\|\nabla\varphi\|^2 \leq C_T \varepsilon^{\frac{2}{3}}|\ln\varepsilon|^{-2} \sup_{0 \leq t \leq t_1(\varepsilon)}\|\nabla\varphi\|^2.
  \end{align*}
Using the  H\"{o}lder  inequality, the Sobolev  inequality and the Young  inequality yields
  \begin{align*}
  &C\Big|\int_{0}^{t}\int_{\Omega} \frac{(\gamma - 1)\lambda\varepsilon}{\rho^2}\frac{|\nabla\varphi|^2}{2}(\div\Psi)^2 dxdt\Big|
  \leq C\varepsilon \int_{0}^{t} \|\nabla\varphi\|\|\nabla\varphi\|_{L^6}\|\nabla\Psi\|_{L^6}^2 dt \\
  & \leq C\varepsilon \int_{0}^{t} \|\nabla\varphi\|\|\nabla\varphi\|_1\|\nabla\Psi\|_1^2 dt
  \leq C\varepsilon^2|\ln\varepsilon|^{-2}\int_{0}^{t} \|\nabla\Psi\|_1^2 dt.
  \end{align*}
  By H\"{o}lder's inequality, Sobolev's inequality, Young's inequality, Lemma \ref{lemma2.2} and Lemma \ref{lemma2.3},  it holds that
  \begin{align*}
  &C\Big|\int_{0}^{t}\int_{\Omega} \frac{\gamma - 1}{\rho^2}\tilde{v}_{1x_1}(2\mu\varepsilon\psi_{1x_1} + \lambda\varepsilon \div\Psi)|\nabla\varphi|^2 dxdt\Big| \\
  &\leq C\varepsilon \int_{0}^{t} \|\tilde{v}_{1x_1}\|_{L^\infty}\|\nabla\varphi\|\|\nabla\varphi\|_{L^4}\|\nabla\Psi\|_{L^4} dt \\
  & \leq C \varepsilon\int_{0}^{t} \|\tilde{v}_{1x_1}\|_{L^\infty}\|\nabla\varphi\|\|\nabla\varphi\|^{1/4}\|\nabla\varphi\|_1^{3/4}\|\nabla\Psi\|^{1/4}\|\nabla\Psi\|_1^{3/4} dt \\
  & \leq \frac{\varepsilon}{160}\int_{0}^{t} \|\nabla\Psi\|_1^2 dt + C\varepsilon \int_{0}^{t} \|\tilde{v}_{1x_1}\|_{L^\infty}^{8/5}\|\nabla\varphi\|^2\|\nabla\varphi\|_1^{6/5}\|\nabla\Psi\|^{2/5} dt \\
  & \leq \frac{\varepsilon}{160}\int_{0}^{t} \|\nabla\Psi\|_1^2 dt + C_T \varepsilon^{\frac{4}{3}}|\ln\varepsilon|^{-\frac{16}{5}} \sup_{0 \leq t \leq t_1(\varepsilon)}\|\nabla\varphi\|^2.
  \end{align*}
By Lemma \ref{lemma2.2} and Lemma \ref{lemma2.3} one has
  \begin{align*}
  C\Big|\int_{0}^{t}\int_{\Omega} \frac{R(\gamma - 1)\theta}{\rho}\left(\frac{-\bar{v}_1z_1 + z_2}{\tilde{\rho}}\right)_{x_1}\frac{|\nabla\varphi|^2}{2} dxdt\Big|
  \leq C_T\varepsilon^{\frac{7}{12}}|\ln\varepsilon|^{-\frac{5}{2}}\sup_{0 \leq t \leq t_1(\varepsilon)}\|\nabla\varphi\|^2.
  \end{align*}
  Then one has  the following estimate, from Young's inequality, Lemma \ref{lemma2.2}, Lemma \ref{lemma2.3} and Lemma \ref{lemma4.1},  
  \begin{align*}
  &C\Big|\int_{0}^{t}\int_{\Omega} R\tilde{\theta}_{x_1}\nabla\varphi\cdot\nabla\psi_1 dxdt\Big| \\
  & \leq C\int_{0}^{t}\int_{\Omega} |\bar{\theta}_{x_1}\nabla\varphi\cdot\nabla\psi_1| dxdt \\
  & \quad + C\int_{0}^{t}\int_{\Omega} \Big|\left(\frac{1}{\tilde{\rho}}\Big(\Big(\frac12v_1^2 - \frac{R}{\gamma - 1}\bar{\theta}\Big)z_1 - \bar{v}_1z_2 + z_3\Big) - \frac{(-\bar{v}_1z_1 + z_2)^2}{2\tilde{\rho}^2}\right)_{x_1}\nabla\varphi\cdot\nabla\psi_1\Big| dxdt \\
  &\leq \frac{1}{160} \int_{0}^{t} \|\bar{v}_{1x_1}^{1/2}\nabla\varphi\|^2 dt + C\frac{1}{\delta}\int_{0}^{t} \|\nabla\psi_1\|^2 dt + \frac{1}{160}\sup_{0 \leq t \leq t_1(\varepsilon)}\|\nabla\varphi\|^2 + C_T\frac{\varepsilon^2}{\delta^5}\int_{0}^{t} \|\nabla\psi_1\|^2 dt \\
  &\leq \frac{1}{160} \Big(\sup_{0 \leq t \leq t_1(\varepsilon)}\|\nabla\varphi\|^2 + \int_{0}^{t} \|\bar{v}_{1x_1}^{1/2}\nabla\varphi\|^2 dt\Big) + C_T \varepsilon^{\frac{5}{3}}|\ln\varepsilon|^{-8}.
  \end{align*}
  Similarly,  
  \begin{align*}
  &C\Big|\int_{0}^{t}\int_{\Omega} \frac{R\theta}{\rho}\tilde{\rho}_{x_1x_1}\psi_1\varphi_{x_1} dxdt\Big| \\
  & \leq C\int_{0}^{t}\int_{\Omega} |\bar{\rho}_{x_1x_1}\psi_1\varphi_{x_1}| dxdt + C\int_{0}^{t}\int_{\Omega} |z_{1x_1x_1}\psi_1\varphi_{x_1}| dxdt \\
  & \leq C\frac{1}{\delta}\int_{0}^{t}\int_{\Omega} |\bar{v}_{1x_1}\psi_1\varphi_{x_1}| dxdt + C\int_{0}^{t} \|z_{1x_1x_1}\|_{L^\infty}\|\psi_1\|\|\varphi_{x_1}\| dt \\
  &\leq \frac{1}{160} \int_{0}^{t} \|\bar{v}_{1x_1}^{1/2}\varphi_{x_1}\|^2 dt + C\frac{1}{\delta^2}\int_{0}^{t} \|\bar{v}_{1x_1}^{1/2}\psi_1\|^2 dt + \frac{1}{160}\sup_{0 \leq t \leq t_1(\varepsilon)}\|\varphi_{x_1}\|^2 + C_T\frac{\varepsilon^2}{\delta^7}\sup_{0 \leq t \leq t_1(\varepsilon)}\|\psi_1\|^2 \\
  &\leq \frac{1}{160} \Big(\sup_{0 \leq t \leq t_1(\varepsilon)}\|\varphi_{x_1}\|^2 + \int_{0}^{t} \|\bar{v}_{1x_1}^{1/2}\varphi_{x_1}\|^2 dt\Big) + C_T \varepsilon^{\frac{5}{2}}|\ln\varepsilon|^{-9}.
  \end{align*}
 From Lemma \ref{lemma2.2} and Lemma \ref{lemma2.3}, we obtain
  \begin{align*}
  C\Big|\int_{0}^{t}\int_{\Omega} \rho\tilde{v}_{1x_1}|\Psi_{x_1}|^2 dxdt\Big|
  \leq C\int_{0}^{t} \|\tilde{v}_{1x_1}\|_{L^\infty}\|\Psi_{x_1}\|^2 dt
  \leq C_T \frac{\varepsilon^3}{\delta^8}
  \leq C_T \varepsilon^{\frac{5}{3}}|\ln\varepsilon|^{-8}.
  \end{align*}
 Using Young's inequality, Lemma \ref{lemma2.2} and Lemma \ref{lemma2.3} yields
  \begin{align*}
  &C\Big|\int_{0}^{t}\int_{\Omega} \rho\tilde{v}_{1x_1}\psi_1\triangle\psi_1 dxdt\Big| \\
  & \leq \frac{\varepsilon}{160} \int_{0}^{t} \|\nabla^2\psi_1\|^2 dt + C\varepsilon^{-1}\int_{0}^{t} \Big(\|\bar{v}_{1x_1}\psi_1\|^2 + \Big\|\Big(\frac{-\bar{v}_1z_1 + z_2}{\tilde{\rho}}\Big)_{x_1}\psi_1\Big\|^2\Big) dt \\
  & \leq \frac{\varepsilon}{160} \int_{0}^{t} \|\nabla^2\psi_1\|^2 dt + C\varepsilon^{-1}\frac{1}{\delta}\int_{0}^{t} \|\bar{v}_{1x_1}^{1/2}\psi_1\|^2 dt + C_T\varepsilon^{-1}\frac{\varepsilon^2}{\delta^5} \sup_{0 \leq t \leq t_1(\varepsilon)}\|\psi_1\|^2 \\
  & \leq \frac{\varepsilon}{160} \int_{0}^{t} \|\nabla^2\psi_1\|^2 dt + C_T\frac{\varepsilon^3}{\delta^8}
  \leq \frac{\varepsilon}{160} \int_{0}^{t} \|\nabla^2\psi_1\|^2 dt + C_T \varepsilon^{\frac{5}{3}}|\ln\varepsilon|^{-8}.
  \end{align*}
  Similarly, one has
  \begin{align*}
  &C\Big|\int_{0}^{t}\int_{\Omega} (2\mu + \lam)\varepsilon\Big(\frac{-\bar{v}_1z_1 + z_2}{\tilde{\rho}}\Big)_{x_1x_1}\triangle\psi_1 dxdt\Big| \\
  & \leq \frac{\varepsilon}{160} \int_{0}^{t} \|\nabla^2\psi_1\|^2 dt + C\varepsilon\int_{0}^{t}\int_{\bbr} \Big|\Big(\frac{-\bar{v}_1z_1 + z_2}{\tilde{\rho}}\Big)_{x_1x_1}\Big|^2 dx_1dt \\
  & \leq \frac{\varepsilon}{160} \int_{0}^{t} \|\nabla^2\psi_1\|^2 dt + C_T \varepsilon^{2}|\ln\varepsilon|^{-6},\\[4mm]
  &C\Big|\int_{0}^{t}\int_{\Omega} (2\mu + \lam)\varepsilon\bar{v}_{1x_1x_1}\frac{\varphi}{\tilde{\rho}}\triangle\psi_1 dxdt\Big| \\
  &
   \leq \frac{\varepsilon}{160} \int_{0}^{t} \|\nabla^2\psi_1\|^2 dt + C\varepsilon\int_{0}^{t} \|\bar{v}_{1x_1x_1}\varphi\|^2 dt \\
  & \leq \frac{\varepsilon}{160} \int_{0}^{t} \|\nabla^2\psi_1\|^2 dt + C\frac{\varepsilon}{\delta^3}\int_{0}^{t} \|\bar{v}_{1x_1}^{1/2}\varphi\|^2 dt \\
  &
   \leq \frac{\varepsilon}{160} \int_{0}^{t} \|\nabla^2\psi_1\|^2 dt + C_T \varepsilon^{\frac{10}{3}}|\ln\varepsilon|^{-10}
  \end{align*}
  and
  \begin{align*}
  &C\Big|\int_{0}^{t}\int_{\Omega} Q_1\frac{\rho}{\tilde{\rho}}\triangle\psi_1 dxdt\Big| \\
  & \leq \frac{\varepsilon}{160} \int_{0}^{t} \|\nabla^2\psi_1\|^2 dt + C\varepsilon^{-1}\int_{0}^{t}\int_{\bbr} \Big|\Big(\frac{(-\bar{v}_1z_1 + z_2)^2}{\tilde{\rho}}\Big)_{x_1}\Big|^2 dx_1dt \\
  & \leq \frac{\varepsilon}{160} \int_{0}^{t} \|\nabla^2\psi_1\|^2 dt + C_T\varepsilon^{\frac{11}{6}}|\ln\varepsilon|^{-7}.
  \end{align*}
Applying Young's inequality, Lemma \ref{lemma2.2} and Lemma \ref{lemma2.3} leads to
  \begin{align*}
  &C\Big|\int_{0}^{t}\int_{\Omega} \tilde{\rho}_{x_1}\tilde{v}_{1x_1}\psi_1\psi_{1x_1} dxdt\Big| \\
  & \leq C\int_{0}^{t} \|\psi_{1x_1}\|^2 dt + C\int_{0}^{t} \|\bar{\rho}_{x_1}\tilde{v}_{1x_1}\psi_1\|^2 dt + C_T\sup_{0 \leq t \leq t_1(\varepsilon)}\|z_{1x_1}\tilde{v}_{1x_1}\psi_1\|^2 \\
  & \leq C_T\varepsilon^{\frac{11}{6}}|\ln\varepsilon|^{-7} + C\sup_{0 \leq t \leq T}\|\bar{v}_{1x_1}\|_{L^\infty}\|\tilde{v}_{1x_1}\|_{L^\infty}^2\int_{0}^{t} \|\bar{v}_{1x_1}^{1/2}\psi_1\|^2 dt\\
  &\qquad + C_T\sup_{0 \leq t \leq t_1(\varepsilon)}\|z_{1x_1}\|_{L^\infty}^2\|\tilde{v}_{1x_1}\|_{L^\infty}^2\|\psi_1\|^2 \\
  & \leq C_T\varepsilon^{\frac{11}{6}}|\ln\varepsilon|^{-7} + C_T\varepsilon^{\frac{7}{3}}|\ln\varepsilon|^{-10},
  \end{align*}
  \begin{align*}
  &C\Big|\int_{0}^{t}\int_{\Omega} \frac{\mu\varepsilon}{\rho}\tilde{\rho}_{x_1}\Psi_{x_1}\cdot\triangle\Psi dxdt\Big| \\
  &\leq \frac{\varepsilon}{160} \int_{0}^{t} \|\nabla^2\Psi\|^2 dt + C\varepsilon\int_{0}^{t} \|\tilde{\rho}_{x_1}\|_{L^\infty}^2\|\Psi_{x_1}\|^2 dt \qquad\qquad\qquad\qquad \quad \\
  & \leq \frac{\varepsilon}{160} \int_{0}^{t} \|\nabla^2\Psi\|^2 dt + C_T \varepsilon^{\frac{5}{2}}|\ln\varepsilon|^{-9},
  \end{align*}
  \begin{align*}
  &C\Big|\int_{0}^{t}\int_{\Omega} \frac{(2\mu + \lam)\varepsilon}{\rho}\tilde{\rho}_{x_1}\Big(\frac{-\bar{v}_1z_1 + z_2}{\tilde{\rho}}\Big)_{x_1x_1}\psi_{1x_1} dxdt\Big| \\
  & \leq \frac{1}{160} \sup_{0 \leq t \leq t_1(\varepsilon)}\|\psi_{1x_1}\|^2 + C_T\varepsilon^2\sup_{0 \leq t \leq T}\Big\|\tilde{\rho}_{x_1}\Big(\frac{-\bar{v}_1z_1 + z_2}{\tilde{\rho}}\Big)_{x_1x_1}\Big\|^2 \\
  & \leq \frac{1}{160} \sup_{0 \leq t \leq t_1(\varepsilon)}\|\psi_{1x_1}\|^2 + C_T \varepsilon^{\frac{8}{3}}|\ln\varepsilon|^{-8},
  \end{align*}

  \begin{align*}
  &C\Big|\int_{0}^{t}\int_{\Omega} \frac{(2\mu + \lam)\varepsilon}{\rho\tilde{\rho}}\tilde{\rho}_{x_1}\bar{v}_{1x_1x_1}\varphi\psi_{1x_1} dxdt\Big| \\
  &\leq \frac{1}{160}\sup_{0 \leq t \leq t_1(\varepsilon)}\|\psi_{1x_1}\|^2 + C_T\varepsilon^2\int_{0}^{t} \|\tilde{\rho}_{x_1}\|_{L^\infty}^2\|\bar{v}_{1x_1x_1}\varphi\|^2 dt \\
  &\leq \frac{1}{160}\sup_{0 \leq t \leq t_1(\varepsilon)}\|\psi_{1x_1}\|^2 + C_T\varepsilon^2\Big(\frac{1}{\delta^2} + \frac{\varepsilon^2}{\delta^5}\Big)\frac{1}{\delta^3}\int_{0}^{t} \|\bar{v}_{1x_1}^{1/2}\varphi\|^2 dt \\
  & \leq \frac{1}{160}\sup_{0 \leq t \leq t_1(\varepsilon)}\|\psi_{1x_1}\|^2 + C_T \varepsilon^{4}|\ln\varepsilon|^{-12},
  \end{align*}
  and
  \begin{align*}
  &C\Big|\int_{0}^{t}\int_{\Omega} \frac{1}{\tilde{\rho}}Q_1\tilde{\rho}_{x_1}\psi_{1x_1} dxdt\Big| \\
  & \leq \frac{1}{160} \sup_{0 \leq t \leq t_1(\varepsilon)}\|\psi_{1x_1}\|^2 + C_T\sup_{0 \leq t \leq T}\Big\|\Big(\frac{1}{\tilde{\rho}}(-\bar{v}_1z_1 + z_2)^2\Big)_{x_1}\tilde{\rho}_{x_1}\Big\|^2 \\
  & \leq \frac{1}{160} \sup_{0 \leq t \leq t_1(\varepsilon)}\|\psi_{1x_1}\|^2 + C_T \varepsilon^{\frac{5}{2}}|\ln\varepsilon|^{-9}.
  \end{align*}
Moreover, the remaining terms in \eqref{14} can be estimated similarly,  and  we omit the details  for the sake of conciseness. 
  Then substituting all these estimates into \eqref{14}, using the standard elliptic estimates $\|\triangle\Psi\| \sim\|\nabla^2\Psi\|$ and $\|\triangle\xi\| \sim\|\nabla^2\xi\|$ and taking
  $\varepsilon$ suitably small,
  we can prove Lemma \ref{lemma4.2}.

\end{proof}

\subsection{Second-order derivative estimates}
In this subsection, we carry out the second-order derivative estimates, which are also different from the 2D case in \cite{LWW2}.
\begin{lemma} \label{lemma4.3}
Under the assumption of Proposition \ref{proposition3.2}, there exists a positive constant $C_T$ independent of $\varepsilon$,  such that for $0\leq t \leq t_1(\varepsilon)$,
	\begin{align}
	\begin{aligned} \label{LEM4.3}
	&\sup_{0 \leq t \leq t_1(\varepsilon)} \| (\nabla^2\varphi, \nabla^2\Psi, \nabla^2\xi)(t) \|^2 + \int_{0}^{t_1(\varepsilon)} \Big[\|\bar{v}_{1x_1}^{1/2} \nabla^2\varphi\|^2 + \varepsilon\|(\nabla^3\Psi, \nabla^3\xi)\|^2 \Big]dt \\
	&\leq C_T \frac{\varepsilon^2}{\delta^9} + C \| (\nabla^2\varphi_0, \nabla^2\Psi_0, \nabla^2\xi_0) \|^2 =C_T\varepsilon^{\frac{1}{2}}|\ln\varepsilon|^{-9} + C \| (\nabla^2\varphi_0, \nabla^2\Psi_0, \nabla^2\xi_0) \|^2.
	\end{aligned}
	\end{align}
\end{lemma}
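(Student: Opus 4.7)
The plan is to mirror the strategy of Lemma \ref{lemma4.2} one derivative higher. I would apply $\partial_{x_k}$ to the perturbation system \eqref{REF} for each $k\in\{1,2,3\}$ to obtain equations for $(\varphi_{x_k},\Psi_{x_k},\xi_{x_k})$, then multiply the differentiated momentum equation by $-\triangle\Psi_{x_k}$, apply $\nabla$ once more to the mass equation and test with $\frac{R\theta}{\rho}\nabla\varphi_{x_k}$, and multiply the differentiated energy equation by $-\frac{1}{\theta}\triangle\xi_{x_k}$. Summing over $k$ and integrating over $[0,t]\times\Omega$ (using the periodicity in $x_2,x_3$ together with integration by parts) would yield an identity analogous to \eqref{14} producing $\|\nabla^2(\varphi,\Psi,\xi)\|^2$, together with the good viscous dissipations $\varepsilon\|(\nabla^3\Psi,\nabla^3\xi)\|^2$ and the rarefaction good term $\|\bar v_{1x_1}^{1/2}\nabla^2\varphi\|^2$ on the left-hand side.

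The critical observation is that the ``bad'' coupling terms $R\theta\,\nabla\partial_{x_k}\varphi\cdot\triangle\partial_{x_k}\Psi$ and $R\rho\,\nabla\partial_{x_k}\xi\cdot\triangle\partial_{x_k}\Psi$ produced by the three tests cancel in exactly the same way as in Lemma \ref{lemma4.2}, thanks to the physical flux structure of the Navier-Stokes-Fourier system emphasized in the paper. The remaining linear source contributions coming from the background profile $(\bar\rho,\bar v_1,\bar\theta)$, from the hyperbolic wave $(z_1,z_2,z_3)$, and from the error terms $F_1$, $F_2$, $Q_1$, $Q_2$ would be treated as in Lemma \ref{lemma4.2} via Young's inequality, the sharp pointwise and $L^p$ bounds of Lemmas \ref{lemma2.2}--\ref{lemma2.3}, and the lower-order bounds from Lemmas \ref{lemma4.1}--\ref{lemma4.2}; the dominant contribution scales like $O(\varepsilon^2/\delta^9)$, matching the stated right-hand side under $\delta=\varepsilon^{1/6}|\ln\varepsilon|$.

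The principal obstacle will be the nonlinear terms, since at this derivative level they involve triple products with a second-order factor, typically of the form $\nabla^2(\varphi,\Psi,\xi)\cdot\nabla^2(\varphi,\Psi,\xi)\cdot\nabla(\varphi,\Psi,\xi)$ coming from the convective term and from the viscous-dissipation $|\nabla\Psi+(\nabla\Psi)^\top|^2$ that appears in the energy equation. In three space dimensions these cannot be closed by a crude $L^2$--$L^\infty$--$L^2$ split; instead, I would employ Gagliardo--Nirenberg interpolations of the form
\[
\|\nabla^2 f\|_{L^3}\le C\|\nabla^2 f\|^{1/2}\|\nabla^3 f\|^{1/2},\qquad \|\nabla f\|_{L^\infty}\le C\|\nabla f\|^{1/4}\|\nabla^3 f\|^{3/4},
\]
so that the highest derivative is absorbed into the dissipation $\varepsilon\|(\nabla^3\Psi,\nabla^3\xi)\|^2$ after Young's inequality with a small parameter. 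This is precisely where the refined a priori assumption \eqref{PA} with exponent $a_2$ becomes essential: a careful bookkeeping of the $\varepsilon$-powers produces the sharp requirement $-\tfrac13+\tfrac43 a_2\ge 0$, i.e.\ the optimal choice $a_2=\tfrac14$ highlighted in the introduction. Once these nonlinear contributions are absorbed, a standard Gronwall argument closes the estimate and yields \eqref{LEM4.3}.
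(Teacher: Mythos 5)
Your high-level strategy matches the paper's: one additional spatial derivative on each equation, the same three-way test producing the good quantities $\|\bar v_{1x_1}^{1/2}\nabla^2\varphi\|^2$ and $\varepsilon\|(\nabla^3\Psi,\nabla^3\xi)\|^2$, the flux cancellation between the pressure coupling and the gradient-of-divergence coupling, and Gagliardo--Nirenberg interpolation to absorb the troublesome second-order nonlinearities into the dissipation using the sharp a priori bound with exponent $a_2$. Your identification of the scaling constraint $-\tfrac13+\tfrac43 a_2\ge 0$ is exactly the paper's \eqref{a-2}, and the dominant $O(\varepsilon^2/\delta^9)$ error is correct.

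The concrete difference is in how the second-order energy identity is set up. You propose to apply $\partial_{x_k}$ to the momentum and energy equations as written (with the $\rho$ coefficient in front of $\Psi_t$ and $\xi_t$) and then reuse the Lemma \ref{lemma4.2} multipliers $-\triangle\Psi_{x_k}$ and $-\tfrac1\theta\triangle\xi_{x_k}$. The paper instead first divides $\eqref{REF}_2$ and $\eqref{REF}_3$ by $\rho$, applies $\nabla$, and tests with $-\tfrac{\rho^2}{\theta}\nabla\triangle\Psi$ and $-\tfrac{\rho^2}{\theta^2}\nabla\triangle\xi$, while the mass equation is hit by $\nabla^2$ and tested with $R\nabla^2\varphi$. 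This is not purely cosmetic: dividing by $\rho$ first removes, at the source, the term $\rho_{x_k}\Psi_t\cdot\triangle\Psi_{x_k}$ (and its energy analogue) that your route would generate when $\partial_{x_k}$ hits $\rho$ in $\rho\Psi_t$; you would have to substitute $\Psi_t$ from the equation and re-estimate the resulting products, which your sketch doesn't address. The paper's weight choices $R$, $\tfrac{\rho^2}{\theta}$, $\tfrac{\rho^2}{\theta^2}$ are also tuned so that the remaining coupling terms $R\rho\nabla^2\varphi\cdot\nabla\triangle\Psi$ and $R\tfrac{\rho^2}{\theta}\nabla^2\xi\cdot\nabla\triangle\Psi$ cancel \emph{pointwise} before integration, whereas with your multipliers the cancellation only happens modulo commutators from the product rule applied to $\theta$ and $\rho$, which then have to be integrated by parts and estimated separately. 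Both routes can be pushed through, but the paper's choice keeps the commutator bookkeeping substantially lighter at this derivative level; if you go the direct route you should be explicit about how you deal with the $\rho_{x_k}\Psi_t$ term and verify that the extra commutators (e.g.\ $R\theta_{x_k}\nabla\varphi\cdot\triangle\Psi_{x_k}$) are indeed controllable, since they involve three derivatives on $\Psi$ against a product of first-order factors.
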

\begin{proof}
	First, we apply the  operator $\nabla^2$ on   $\eqref{REF}_1$ and   multiply  the resulting equation by $R\nabla^2\varphi$ to get
		\begin{align}
	\begin{aligned} \label{22}
	&\Big(R\frac{|\nabla^2\varphi|^2}{2}\Big)_t + \div\Big(R\bv\frac{|\nabla^2\varphi|^2}{2} - R\rho\varphi_{x_ix_j}\nabla\psi_{ix_j}\Big) + (R\rho\nabla^2\varphi\cdot\nabla^2\psi_i)_{x_i} + R\bar{v}_{1x_1}\frac{|\nabla^2\varphi|^2}{2} \\[2mm]
	&\quad + 2R\bar{v}_{1x_1}|\nabla\varphi_{x_1}|^2 + R\rho\nabla^2\varphi\cdot\nabla\triangle\Psi \\[2mm]
	&= -R\div\Psi\frac{|\nabla^2\varphi|^2}{2} - R\Big(\frac{-\bar{v}_1z_1 + z_2}{\tilde{\rho}}\Big)_{x_1}\frac{|\nabla^2\varphi|^2}{2} - 2R\psi_{ix_j}\nabla\varphi_{x_i}\cdot\nabla\varphi_{x_j} \\[2mm]
	&\quad - 2R\Big(\frac{-\bar{v}_1z_1 + z_2}{\tilde{\rho}}\Big)_{x_1}|\nabla\varphi_{x_1}|^2 - R\tilde{v}_{1x_1x_1}\varphi_{x_1}\varphi_{x_1x_1} - R\tilde{\rho}_{x_1x_1}\div\Psi\varphi_{x_1x_1} \\[2mm]
	&\quad - 2R\varphi_{x_i}\nabla\varphi_{x_i}\cdot\nabla\div\Psi - 2R\tilde{\rho}_{x_1}\nabla\varphi_{x_1}\cdot\nabla\div\Psi - R\nabla\varphi\cdot\nabla\psi_{ix_j}\varphi_{x_ix_j} \\[2mm]
	&\quad - R\tilde{\rho}_{x_1}\nabla^2\varphi\cdot\nabla\Psi_{x_1} - R\tilde{\rho}_{x_1x_1x_1}\psi_1\varphi_{x_1x_1} - 2R\tilde{\rho}_{x_1x_1} \nabla\psi_1\cdot\nabla\varphi_{x_1} - R\tilde{v}_{1x_1x_1x_1}\varphi\varphi_{x_1x_1} \\[2mm]
	&\quad - 2R\tilde{v}_{1x_1x_1}\nabla\varphi\cdot\nabla\varphi_{x_1}\\[2mm]
	&
	:= L(t, x).
	\end{aligned}
	\end{align}
Now we divide $\eqref{REF}_2$ by $\rho$, apply  the gradient to the resulting equation and then multiply it by $-\frac{\rho^2}{\theta}\nabla\triangle\Psi$ to obtain
	\begin{align}
	\begin{aligned} \label{21}
	&\Big(\frac{\rho^2}{\theta}\frac{|\nabla^2\Psi|^2}{2}\Big)_t - \Big(\frac{\rho^2}{\theta}\nabla\Psi_t\cdot\nabla\Psi_{x_i}\Big)_{x_i} - \Big(\frac{\rho^2}{\theta}v_i\nabla\Psi_{x_i}\cdot\nabla\Psi_{x_j}\Big)_{x_j} + \div\Big(\frac{\rho^2}{\theta}\bv\frac{|\nabla^2\Psi|^2}{2}\Big) \\[3mm]
	&  - (\mu + \lam)\varepsilon\div\Big(\frac{\rho}{\theta}\div\Psi_{x_j}\nabla\div\Psi_{x_j}\Big) + (\mu + \lam)\varepsilon\Big(\frac{\rho}{\theta}\div\Psi_{x_j}\triangle\psi_{ix_j}\Big)_{x_i} + \mu\varepsilon\frac{\rho}{\theta}|\nabla\triangle\Psi|^2 \\
	& + (\mu + \lam)\varepsilon\frac{\rho}{\theta}|\nabla^2\div\Psi|^2 - R\rho\nabla^2\varphi\cdot\nabla\triangle\Psi - R\frac{\rho^2}{\theta}\nabla^2\xi\cdot\nabla\triangle\Psi=\sum_{i=1}^6M_i(t,x),\\
	\end{aligned}
	\end{align}
	where
\begin{align*}
  & 
M_1(t,x) := - \frac{2\rho}{\theta}\varphi_{x_i}\nabla\Psi_{x_i}\cdot\nabla\Psi_t - \frac{2\rho}{\theta}\tilde{\rho}_{x_1}\nabla\Psi_{x_1}\cdot\nabla\Psi_t + \frac{\rho^2}{\theta^2}\xi_{x_i}\nabla\Psi_{x_i}\cdot\nabla\Psi_t + \frac{\rho^2}{\theta^2}\tilde{\theta}_{x_1}\nabla\Psi_{x_1}\cdot\nabla\Psi_t,\\[3mm]
&
\di M_2(t,x):=- \frac{2\rho}{\theta}\rho_{x_j}v_i\nabla\Psi_{x_i}\cdot\nabla\Psi_{x_j} + \frac{\rho^2}{\theta^2}\theta_{x_j}v_i\nabla\Psi_{x_i}\cdot\nabla\Psi_{x_j} - \frac{\rho^2}{\theta}\psi_{ix_j}\nabla\Psi_{x_i}\cdot\nabla\Psi_{x_j}\\[3mm]
& \qquad\qquad\quad - \frac{\rho^2}{\theta}\tilde{v}_{1x_1}|\nabla\Psi_{x_1}|^2 + (\gamma - 2)\frac{\rho^2}{\theta}\div\Psi\frac{|\nabla^2\Psi|^2}{2} + (\gamma - 2)\frac{\rho^2}{\theta}\tilde{v}_{1x_1}\frac{|\nabla^2\Psi|^2}{2},\\[3mm]
&
M_3(t,x):=- \frac{(\gamma - 1)}{R}\frac{\rho}{\theta^2}\frac{|\nabla^2\Psi|^2}{2}\Big[\kappa\varepsilon\triangle\xi +\kappa\varepsilon\tilde{\theta}_{x_1x_1} + \frac{\mu\varepsilon}{2}|\nabla\Psi + (\nabla\Psi)^\top|^2 + \lambda\varepsilon(\div\Psi)^2 \\[3mm]
&\qquad\qquad\quad +2\tilde{v}_{1x_1}(2\mu\varepsilon\psi_{1x_1} + \lambda\varepsilon \div\Psi) + (2\mu + \lambda)\varepsilon(\tilde{v}_1\tilde{v}_{1x_1})_{x_1}\Big],
\end{align*}
and 
\begin{align*}
  &
M_4(t,x):= \frac{\rho^2}{\theta}\psi_{jx_i}\nabla\psi_i\cdot\nabla\triangle\psi_j + \frac{\rho^2}{\theta}\tilde{v}_{1x_1}\Psi_{x_1}\cdot\triangle\Psi_{x_1} + \frac{R\rho}{\theta}\varphi_{x_i}\nabla\xi\cdot\nabla\triangle\psi_i+ \frac{R\rho}{\theta}\tilde{\theta}_{x_1}\nabla\varphi\cdot\triangle\Psi_{x_1} \\[3mm]
& \qquad\qquad\quad  - R\varphi_{x_i}\nabla\varphi\cdot\nabla\triangle\psi_i - R\tilde{\rho}_{x_1}\nabla\varphi\cdot\triangle\Psi_{x_1} + \frac{\rho^2}{\theta}\tilde{v}_{1x_1x_1}\psi_1\triangle\psi_{1x_1} + \frac{\rho^2}{\theta}\tilde{v}_{1x_1}\nabla\psi_1\cdot\nabla\triangle\psi_1 \\
& \qquad\qquad\quad  + \frac{R\rho^2}{\theta}\tilde{\rho}_{x_1x_1}\Big(\frac{\theta}{\rho} - \frac{\tilde{\theta}}{\tilde{\rho}}\Big)\triangle\psi_{1x_1} + \frac{R\rho}{\theta}\tilde{\rho}_{x_1}\nabla\xi\cdot\nabla\triangle\psi_1 - \frac{R\rho}{\theta\tilde{\rho}}\tilde{\rho}_{x_1}\tilde{\theta}_{x_1}\varphi\triangle\psi_{1x_1}\\
&\qquad\qquad\quad  - R\tilde{\rho}_{x_1}\nabla\varphi\cdot\nabla\triangle\psi_1- \frac{R\rho^2}{\theta}\tilde{\rho}_{x_1}^2\Big(\frac{\theta}{\rho^2} - \frac{\tilde{\theta}}{\tilde{\rho}^2}\Big)\triangle\psi_{1x_1} + \frac{\mu\varepsilon}{\theta}\triangle\psi_i\nabla\varphi\cdot\nabla\triangle\psi_i \\
&\qquad\qquad\quad  + \frac{\mu\varepsilon}{\theta}\tilde{\rho}_{x_1}\triangle\Psi\cdot\triangle\Psi_{x_1},\\[4mm]
&
M_5(t,x):= \frac{(\mu + \lam)\varepsilon}{\theta}\varphi_{x_i}\div\Psi_{x_j}\triangle\psi_{ix_j} + \frac{(\mu + \lam)\varepsilon}{\theta}\tilde{\rho}_{x_1}\div\Psi_{x_j}\triangle\psi_{1x_j} \qquad\qquad\qquad\qquad\qquad\qquad\\[3mm]
&\qquad\qquad\quad  - (\mu + \lam)\varepsilon\frac{\rho}{\theta^2}\xi_{x_i}\div\Psi_{x_j}\triangle\psi_{ix_j} - (\mu + \lam)\varepsilon\frac{\rho}{\theta^2}\tilde{\theta}_{x_1}\div\Psi_{x_j}\triangle\psi_{1x_j} \\[3mm]
& \qquad\qquad\quad - \frac{(\mu + \lam)\varepsilon}{\theta}\div\Psi_{x_j}\nabla\varphi\cdot\nabla\div\Psi_{x_j} - \frac{(\mu + \lam)\varepsilon}{\theta}\tilde{\rho}_{x_1}\div\Psi_{x_j}\div\Psi_{x_1x_j} \\[3mm]
&\qquad\qquad\quad  + (\mu + \lam)\varepsilon\frac{\rho}{\theta^2}\div\Psi_{x_j}\nabla\xi\cdot\nabla\div\Psi_{x_j} + (\mu + \lam)\varepsilon\frac{\rho}{\theta^2}\tilde{\theta}_{x_1}\div\Psi_{x_j}\div\Psi_{x_1x_j} \\[3mm]
& \qquad\qquad\quad + \frac{(\mu + \lam)\varepsilon}{\theta}\div\Psi_{x_i}\nabla\varphi\cdot\nabla\triangle\psi_i + \frac{(\mu + \lam)\varepsilon}{\theta}\tilde{\rho}_{x_1}\nabla\div\Psi\cdot\triangle\Psi_{x_1},\\[3mm]
& M_6(t,x):=- (2\mu + \lam)\varepsilon\frac{\rho}{\theta}\Big(\frac{-\bar{v}_1z_1 + z_2}{\tilde{\rho}}\Big)_{x_1x_1x_1}\triangle\psi_{1x_1} + \frac{(2\mu + \lam)\varepsilon}{\theta}\Big(\frac{-\bar{v}_1z_1 + z_2}{\tilde{\rho}}\Big)_{x_1x_1}\tilde{\rho}_{x_1}\triangle\psi_{1x_1} \\
&\qquad\qquad\quad + \frac{(2\mu + \lam)\varepsilon}{\theta}\Big(\frac{-\bar{v}_1z_1 + z_2}{\tilde{\rho}}\Big)_{x_1x_1}\nabla\varphi\cdot\nabla\triangle\psi_1  + \frac{(2\mu + \lam)\varepsilon\rho}{\tilde{\rho}\theta}\bar{v}_{1x_1x_1x_1}\varphi\triangle\psi_{1x_1} \\
& \qquad\qquad\quad + \frac{(2\mu + \lam)\varepsilon}{\theta}\bar{v}_{1x_1x_1}\nabla\varphi\cdot\nabla\triangle\psi_1 - \frac{(2\mu + \lam)\varepsilon}{\tilde{\rho}^2\theta}(\tilde{\rho} + \rho)\tilde{\rho}_{x_1}\bar{v}_{1x_1x_1}\varphi\triangle\psi_{1x_1}\\
&\qquad\qquad\quad  + \frac{\rho^2}{\theta}\Big(\frac{Q_1}{\tilde{\rho}}\Big)_{x_1}\triangle\psi_{1x_1}.
\end{align*}
Dividing the equation $\eqref{REF}_3$ by $\rho$, applying the differential operator $\nabla$ to the resulting equation and then multiplying the final equation by $-\frac{\rho^2}{\theta^2}\nabla\triangle\xi$, we get
	\begin{align}
	\begin{aligned} \label{23}
	&\Big(\frac{R}{\gamma - 1}\frac{\rho^2}{\theta^2}\frac{|\nabla^2\xi|^2}{2}\Big)_t - \Big(\frac{R}{\gamma - 1}\frac{\rho^2}{\theta^2}\nabla\xi_t\cdot\nabla\xi_{x_i}\Big)_{x_i} - \Big(\frac{R}{\gamma - 1}\frac{\rho^2}{\theta^2}v_i\nabla\xi_{x_i}\cdot\nabla\xi_{x_j}\Big)_{x_j} \\[2mm]
	&\quad + \div\Big(\frac{R}{\gamma - 1}\frac{\rho^2}{\theta^2}\bv\frac{|\nabla^2\xi|^2}{2} - R\frac{\rho^2}{\theta}\nabla\psi_{jx_i}\xi_{x_ix_j}\Big) - \Big(R\frac{\rho^2}{\theta}\nabla\div\Psi\cdot\nabla\xi_{x_i}\Big)_{x_i} \\
	&\quad + \Big(R\frac{\rho^2}{\theta}\nabla\psi_{jx_i}\cdot\nabla\xi_{x_i}\Big)_{x_j} + \frac{\kappa\varepsilon\rho}{\theta^2}|\nabla\triangle\xi|^2 + R\frac{\rho^2}{\theta}\nabla^2\xi\cdot\nabla\triangle\Psi=\sum_{i=1}^6N_i(t,x),
	\end{aligned}
	\end{align}	
where
\begin{align*}
&N_1(t,x):= -\frac{R}{\gamma - 1}\frac{2\rho}{\theta^2}\varphi_{x_i}\nabla\xi_{x_i}\cdot\nabla\xi_t - \frac{R}{\gamma - 1}\frac{2\rho}{\theta^2}\tilde{\rho}_{x_1}\nabla\xi_{x_1}\cdot\nabla\xi_t\hspace{4cm}\quad\\[3mm]
&\qquad\qquad\quad  + \frac{R}{\gamma - 1}\frac{2\rho^2}{\theta^3}\xi_{x_i}\nabla\xi_{x_i}\cdot\nabla\xi_t + \frac{R}{\gamma - 1}\frac{2\rho^2}{\theta^3}\tilde{\theta}_{x_1}\nabla\xi_{x_1}\cdot\nabla\xi_t,\\[4mm]
&
N_2(t,x):=- \frac{R}{\gamma - 1}\frac{2\rho}{\theta^2}\rho_{x_j}v_i\nabla\xi_{x_i}\cdot\nabla\xi_{x_j} + \frac{R}{\gamma - 1}\frac{2\rho^2}{\theta^3}\theta_{x_j}v_i\nabla\xi_{x_i}\cdot\nabla\xi_{x_j} \\
&\qquad\qquad\quad  - R\frac{2\rho}{\theta}\rho_{x_i}\nabla\div\Psi\cdot\nabla\xi_{x_i} + R\frac{\rho^2}{\theta^2}\theta_{x_i}\nabla\div\Psi\cdot\nabla\xi_{x_i} - \frac{R}{\gamma - 1}\frac{\rho^2}{\theta^2}\psi_{ix_j}\nabla\xi_{x_i}\cdot\nabla\xi_{x_j} \\[2mm]
&\qquad\qquad\quad - \frac{R}{\gamma - 1}\frac{\rho^2}{\theta^2}\tilde{v}_{1x_1}|\nabla\xi_{x_1}|^2 + \frac{(2\gamma - 3)R}{\gamma-1}\frac{\rho^2}{\theta^2}\div\Psi\frac{|\nabla^2\xi|^2}{2} + \frac{(2\gamma - 3)R}{\gamma-1}\frac{\rho^2}{\theta^2}\tilde{v}_{1x_1}\frac{|\nabla^2\xi|^2}{2},\\[4mm]
&
N_3(t,x):=- \frac{2\rho}{\theta^3}\frac{|\nabla^2\xi|^2}{2}\Big[\kappa\varepsilon\triangle\xi +\kappa\varepsilon\tilde{\theta}_{x_1x_1} + \frac{\mu\varepsilon}{2}|\nabla\Psi + (\nabla\Psi)^\top|^2 + \lambda\varepsilon(\div\Psi)^2 \qquad\qquad\quad\\[3mm]
& \qquad\qquad\quad + 2\tilde{v}_{1x_1}(2\mu\varepsilon\psi_{1x_1} + \lambda\varepsilon \div\Psi) + (2\mu + \lambda)\varepsilon(\tilde{v}_1\tilde{v}_{1x_1})_{x_1}\Big],\\[4mm]
& N_4(t,x):= R\frac{2\rho}{\theta}\varphi_{x_j}\nabla\psi_{jx_i}\cdot\nabla\xi_{x_i} + R\frac{2\rho}{\theta}\tilde{\rho}_{x_1}\nabla\psi_{1x_i}\cdot\nabla\xi_{x_i} \\
&\qquad\qquad\quad - R\frac{\rho^2}{\theta^2}\xi_{x_j}\nabla\psi_{jx_i}\cdot\nabla\xi_{x_i} - R\frac{\rho^2}{\theta^2}\tilde{\theta}_{x_1}\nabla\psi_{1x_i}\cdot\nabla\xi_{x_i} \\[2mm]
&\qquad\qquad\quad  - R\frac{2\rho}{\theta}\nabla\varphi\cdot\nabla\psi_{jx_i}\xi_{x_ix_j} - R\frac{2\rho}{\theta}\tilde{\rho}_{x_1}\nabla\Psi_{x_1}\cdot\nabla^2\xi + R\frac{\rho^2}{\theta^2}\nabla\xi\cdot\nabla\psi_{jx_i}\xi_{x_ix_j}\qquad\qquad\qquad\quad \\[2mm]
&\qquad\qquad\quad  + R\frac{\rho^2}{\theta^2}\tilde{\theta}_{x_1}\nabla\Psi_{x_1}\cdot\nabla^2\xi + \frac{R}{\gamma - 1}\frac{\rho^2}{\theta^2}\xi_{x_i}\nabla\psi_i\cdot\nabla\triangle\xi + \frac{R}{\gamma - 1}\frac{\rho^2}{\theta^2}\tilde{v}_{1x_1}\xi_{x_1}\triangle\xi_{x_1} \\[2mm]
&\qquad\qquad\quad  + \frac{R\rho^2}{\theta^2}\div\Psi\nabla\xi\cdot\nabla\triangle\xi + \frac{R\rho^2}{\theta^2}\tilde{\theta}_{x_1}\div\Psi\triangle\xi_{x_1} + \frac{R}{\gamma - 1}\frac{\rho^2}{\theta^2}\tilde{\theta}_{x_1x_1}\psi_1\triangle\xi_{x_1} \\[2mm]
&\qquad\qquad\quad + \frac{R}{\gamma - 1}\frac{\rho^2}{\theta^2}\tilde{\theta}_{x_1}\nabla\psi_1\cdot\nabla\triangle\xi + \frac{R\rho^2}{\theta^2}\tilde{v}_{1x_1x_1}\xi\triangle\xi_{x_1} + \frac{R\rho^2}{\theta^2}\tilde{v}_{1x_1}\nabla\xi\cdot\nabla\triangle\xi,\\[4mm]
& N_5(t,x):= \frac{\kappa\varepsilon}{\theta^2}\triangle\xi\nabla\varphi\cdot\nabla\triangle\xi + \frac{\kappa\varepsilon}{\theta^2}\tilde{\rho}_{x_1}\triangle\xi\triangle\xi_{x_1} - \frac{\mu\varepsilon\rho}{2\theta^2}\nabla(|\nabla\Psi + (\nabla\Psi)^\top|^2)\cdot\nabla\triangle\xi\qquad\qquad \\[2mm]
&\qquad\qquad\quad  + \frac{\mu\varepsilon}{2\theta^2}|\nabla\Psi + (\nabla\Psi)^\top|^2\nabla\varphi\cdot\nabla\triangle\xi + \frac{\mu\varepsilon}{2\theta^2}\tilde{\rho}_{x_1}|\nabla\Psi + (\nabla\Psi)^\top|^2\triangle\xi_{x_1} \\[2mm]
& \qquad\qquad\quad  - \frac{\lam\varepsilon\rho}{\theta^2}\nabla(\div\Psi)^2\cdot\nabla\triangle\xi + \frac{\lambda\varepsilon}{\theta^2}(\div\Psi)^2\nabla\varphi\cdot\nabla\triangle\xi +\frac{\lam\varepsilon}{\theta^2}\tilde{\rho}_{x_1}(\div\Psi)^2\triangle\xi_{x_1},
\end{align*}	
	and
$$
\begin{array}{ll}
\di N_6(t,x):=- \frac{2\rho}{\theta^2}\tilde{v}_{1x_1x_1}(2\mu\varepsilon\psi_{1x_1} + \lambda\varepsilon\div\Psi)\triangle\xi_{x_1} - \frac{2\rho}{\theta^2}\tilde{v}_{1x_1}(2\mu\varepsilon\nabla\psi_{1x_1} + \lambda\varepsilon\nabla\div\Psi)\cdot\nabla\triangle\xi
	\\[4mm]
	\di\qquad\qquad\quad  + \frac{2\tilde{v}_{1x_1}}{\theta^2}(2\mu\varepsilon\psi_{1x_1} + \lambda\varepsilon\div\Psi)\nabla\varphi\cdot\nabla\triangle\xi + \frac{2\tilde{v}_{1x_1}\tilde{\rho}_{x_1}}{\theta^2}(2\mu\varepsilon\psi_{1x_1} + \lambda\varepsilon\div\Psi)\triangle\xi_{x_1} \qquad\qquad\qquad\\[4mm]
	\di\qquad\qquad\quad - \frac{\rho}{\theta^2}\nabla{F_1}\cdot\nabla\triangle\xi + \frac{F_1}{\theta^2}\nabla\varphi\cdot\nabla\triangle\xi + \frac{F_1}{\theta^2}\tilde{\rho}_{x_1}\triangle\xi_{x_1} + \frac{\rho^2}{\theta^2}\nabla\Big(\frac{Q_2}{\tilde{\rho}}\Big)\cdot\nabla\triangle\xi \\[4mm]
	\di\qquad\qquad\quad  - \frac{\rho}{\theta^2}\nabla{F_2}\cdot\nabla\triangle\xi + \frac{F_2}{\theta^2}\nabla\varphi\cdot\nabla\triangle\xi + \frac{F_2}{\theta^2}\tilde{\rho}_{x_1}\triangle\xi_{x_1}.
\end{array}
$$
We now add \eqref{21}, \eqref{22} and \eqref{23} together, then integrate the resulting equation over $[0, ~t]\times\Omega$ to get
	\begin{align}
	\begin{aligned} \label{24}
	&\| (\nabla^2\varphi, \nabla^2\Psi, \nabla^2\xi)(t) \|^2 + \int_{0}^{t} \Big[\|\bar{v}_{1x_1}^{1/2} \nabla^2\varphi\|^2 + \varepsilon\|(\nabla^3\Psi, \nabla^3\xi)\|^2 \Big]dt \\
	&\leq C \| (\nabla^2\varphi_0, \nabla^2\Psi_0, \nabla^2\xi_0) \|^2 + C \Big|\int_{0}^{t}\int_{\Omega} L(t, x) + \sum_{i=1}^6\big(M_i(t, x) + N_i(t, x)\big) dxdt\Big|,
	\end{aligned}
	\end{align}
	where $L, M_i$ and $N_i~( i=1,2,\cdots 6)$ are defined in \eqref{22}, \eqref{21} and \eqref{23}, respectively.
	
Now we   compute some terms on the right hand side of \eqref{24}. 
First, we estimate the term $\int_{0}^{t}\int_{\Omega} L(t, x)dxdt$.  Using H\"{o}lder's inequality, Sobolev's inequality and Young's inequality, one has
	\begin{align}
	\begin{aligned} \label{25}
	&C\Big|\int_{0}^{t}\int_{\Omega} R\div\Psi\frac{|\nabla^2\varphi|^2}{2} dxdt\Big|
	\leq C \int_{0}^{t} \|\nabla\Psi\|_{L^\infty} \|\nabla^2\varphi\|^2 dt \\
	&\leq C \int_{0}^{t} (\|\nabla\Psi\|^{1/2}\|\nabla^2\Psi\|^{1/2} + \|\nabla^2\Psi\|^{1/2}\|\nabla^3\Psi\|^{1/2}) \|\nabla^2\varphi\|^2 dt \\
	&\leq C \varepsilon^{\frac 12}|\ln\varepsilon|^{-1}\int_{0}^{t} \|\nabla^2\varphi\|^2 dt + \frac{\varepsilon}{160}\int_{0}^{t} \|\nabla^3\Psi\|^2 dt + C\varepsilon^{-\frac 13}\int_{0}^{t} \|\nabla^2\Psi\|^{2/3}\|\nabla^2\varphi\|^{8/3} dt \\
	&\leq C_T \varepsilon^{\frac 12}|\ln\varepsilon|^{-1} \sup_{0 \leq t \leq t_1(\varepsilon)}\|\nabla^2\varphi\|^2 + \frac{\varepsilon}{160}\int_{0}^{t} \|\nabla^3\Psi\|^2 dt + C_T\varepsilon^{\frac{4}{3}a_2 - \frac 13}|\ln\varepsilon|^{-\frac 43}\sup_{0 \leq t \leq t_1(\varepsilon)}\|\nabla^2\varphi\|^2.
	\end{aligned}
	\end{align}
Note that the last term on the last inequality of \eqref{25} is crucial to determine the index $a_2=\frac14$ in the {\it a priori} assumptions \eqref{PA}.
	It follows from Lemma \ref{lemma2.2} and Lemma \ref{lemma2.3} that
	\begin{align*}
	C\Big|\int_{0}^{t}\int_{\Omega} R\Big(\frac{-\bar{v}_1z_1 + z_2}{\tilde{\rho}}\Big)_{x_1}\frac{|\nabla^2\varphi|^2}{2} dxdt\Big|
	\leq C_T\varepsilon^{\frac{7}{12}}|\ln\varepsilon|^{-\frac{5}{2}}\sup_{0 \leq t \leq t_1(\varepsilon)}\|\nabla^2\varphi\|^2.
	\end{align*}
	From H\"{o}lder's inequality, Sobolev's inequality and Young's inequality, we obtain
	\begin{align*}
	&C\Big|\int_{0}^{t}\int_{\Omega} 2R\varphi_{x_i}\nabla\varphi_{x_i}\cdot\nabla\div\Psi dxdt\Big|
	\leq C \int_{0}^{t} \|\nabla^2\varphi\|\|\nabla\varphi\|_{L^4}\|\nabla^2\Psi\|_{L^4} dt \\
	& \leq C \int_{0}^{t} \|\nabla^2\varphi\|\|\nabla\varphi\|^{1/4}\|\nabla\varphi\|_1^{3/4}\|\nabla^2\Psi\|^{1/4}\|\nabla^2\Psi\|_1^{3/4} dt \\
	& \leq \frac{\varepsilon}{160}\int_{0}^{t} \|\nabla^2\Psi\|_1^2 dt + C\varepsilon^{-\frac 35} \int_{0}^{t} \|\nabla^2\varphi\|^{8/5} \|\nabla\varphi\|^{2/5}\|\nabla\varphi\|_1^{6/5}\|\nabla^2\Psi\|^{2/5} dt \qquad\qquad\\
	& \leq \frac{\varepsilon}{160}\int_{0}^{t} \|\nabla^2\Psi\|_1^2 dt + C_T(\varepsilon^{\frac 35} + 1)|\ln\varepsilon|^{-\frac 85}\sup_{0 \leq t \leq t_1(\varepsilon)}\|(\nabla^2\varphi, \nabla^2\Psi)\|^2.
	\end{align*}
	By H\"{o}lder's inequality, Young's inequality, Lemma \ref{lemma2.2} and Lemma \ref{lemma2.3}, it holds that
	\begin{align*}
	&C\Big|\int_{0}^{t}\int_{\Omega} R\tilde{\rho}_{x_1x_1}\div\Psi\varphi_{x_1x_1} dxdt\Big| \\
	& \leq C\int_{0}^{t}\int_{\Omega} |\bar{\rho}_{x_1x_1}\div\Psi\varphi_{x_1x_1}| dxdt + C\int_{0}^{t}\int_{\Omega} |z_{1x_1x_1}\div\Psi\varphi_{x_1x_1}| dxdt \\
	& \leq C\frac{1}{\delta^{3/2}}\int_{0}^{t} \|\nabla\Psi\|\|\bar{v}_{1x_1}^{1/2}\varphi_{x_1x_1}\| dt + C\int_{0}^{t} \|z_{1x_1x_1}\|_{L^\infty}\|\nabla\Psi\|\|\varphi_{x_1x_1}\| dt \\
	&\leq \frac{1}{160}\int_{0}^{t} \|\bar{v}_{1x_1}^{1/2}\varphi_{x_1x_1}\|^2 dt + \frac{1}{160}\sup_{0 \leq t \leq t_1(\varepsilon)}\|\varphi_{x_1x_1}\|^2 + C_T\Big(\frac{1}{\delta^3} + \frac{\varepsilon^2}{\delta^7}\Big)\sup_{0 \leq t \leq t_1(\varepsilon)} \|\nabla\Psi\|^2 \\
	&\leq \frac{1}{160}\int_{0}^{t} \|\bar{v}_{1x_1}^{1/2}\varphi_{x_1x_1}\|^2 dt + \frac{1}{160}\sup_{0 \leq t \leq t_1(\varepsilon)}\|\varphi_{x_1x_1}\|^2 + C_T\varepsilon^{\frac{7}{6}}|\ln\varepsilon|^{-11}.
	\end{align*}
	Similarly, one has
	\begin{align}\label{dr}
	&C\Big|\int_{0}^{t}\int_{\Omega} 2R\tilde{\rho}_{x_1}\nabla\varphi_{x_1}\cdot\nabla\div\Psi dxdt\Big|\nonumber \\
	& \leq C\int_{0}^{t}\int_{\Omega} |\bar{\rho}_{x_1}\nabla\varphi_{x_1}\cdot\nabla\div\Psi| dxdt + C\int_{0}^{t}\int_{\Omega} |z_{1x_1}\nabla\varphi_{x_1}\cdot\nabla\div\Psi| dxdt \nonumber \\
	& \leq C\frac{1}{\delta^{1/2}}\int_{0}^{t} \|\bar{v}_{1x_1}^{1/2}\nabla\varphi_{x_1}\|\|\nabla^2\Psi\| dt + C\int_{0}^{t} \|z_{1x_1}\|_{L^\infty}\|\nabla\varphi_{x_1}\|\|\nabla^2\Psi\| dt \\
	&\leq \frac{1}{160}\int_{0}^{t} \|\bar{v}_{1x_1}^{1/2}\nabla\varphi_{x_1}\|^2 dt + \frac{1}{160}\sup_{0 \leq t \leq t_1(\varepsilon)}\|\nabla\varphi_{x_1}\|^2 + C_T\Big(\frac{1}{\delta} + \frac{\varepsilon^2}{\delta^5}\Big) \int_{0}^{t} \|\nabla^2\Psi\|^2 dt\nonumber \\
	&\leq \frac{1}{160}\int_{0}^{t} \|\bar{v}_{1x_1}^{1/2}\nabla\varphi_{x_1}\|^2 dt + \frac{1}{160}\sup_{0 \leq t \leq t_1(\varepsilon)}\|\nabla\varphi_{x_1}\|^2 + C_T \frac{\varepsilon^2}{\delta^9} \nonumber \\
	&\leq \frac{1}{160}\int_{0}^{t} \|\bar{v}_{1x_1}^{1/2}\nabla\varphi_{x_1}\|^2 dt + \frac{1}{160}\sup_{0 \leq t \leq t_1(\varepsilon)}\|\nabla\varphi_{x_1}\|^2 + C_T\varepsilon^{\frac{1}{2}}|\ln\varepsilon|^{-9}, \nonumber 
	\end{align}
	and
	\begin{align*}
	&C\Big|\int_{0}^{t}\int_{\Omega} R\tilde{\rho}_{x_1x_1x_1}\psi_1\varphi_{x_1x_1} dxdt\Big| \\
	& \leq C\int_{0}^{t}\int_{\Omega} |\bar{\rho}_{x_1x_1x_1}\psi_1\varphi_{x_1x_1}| dxdt + C\int_{0}^{t}\int_{\Omega} |z_{1x_1x_1x_1}\psi_1\varphi_{x_1x_1}| dxdt \\
	& \leq C\frac{1}{\delta^{5/2}}\int_{0}^{t} \|\psi_1\|\|\bar{v}_{1x_1}^{1/2}\varphi_{x_1x_1}\| dt + C\int_{0}^{t} \|z_{1x_1x_1x_1}\|_{L^\infty}\|\psi_1\|\|\varphi_{x_1x_1}\| dt \\
	&\leq \frac{1}{160}\int_{0}^{t} \|\bar{v}_{1x_1}^{1/2}\varphi_{x_1x_1}\|^2 dt + \frac{1}{160}\sup_{0 \leq t \leq t_1(\varepsilon)}\|\varphi_{x_1x_1}\|^2 + C_T\Big(\frac{1}{\delta^5} + \frac{\varepsilon^2}{\delta^9}\Big)\sup_{0 \leq t \leq t_1(\varepsilon)} \|\psi_1\|^2 \\
	&\leq \frac{1}{160}\int_{0}^{t} \|\bar{v}_{1x_1}^{1/2}\varphi_{x_1x_1}\|^2 dt + \frac{1}{160}\sup_{0 \leq t \leq t_1(\varepsilon)}\|\varphi_{x_1x_1}\|^2 + C_T\varepsilon^{2}|\ln\varepsilon|^{-12}.
	\end{align*}
	The other terms in $L(t,x)$ can be estimated similarly. 
	
	Now we handle $\di  \int_{0}^{t}\int_{\Omega} \sum_{i=1}^6M_i(t, x)dxdt$ as follows. From Lemma \ref{lemma2.2} and Lemma \ref{lemma2.3}, it follows that	
	\begin{align}\label{dr4}
	C\Big|\int_{0}^{t}\int_{\Omega} \frac{\rho^2}{\theta}\tilde{v}_{1x_1}|\nabla\Psi_{x_1}|^2 dxdt\Big|
	\leq C\int_{0}^{t} \|\tilde{v}_{1x_1}\|_{L^\infty}\|\nabla\Psi_{x_1}\|^2 dt
	\leq C_T\frac{\varepsilon^2}{\delta^9}
	\leq C_T\varepsilon^{\frac{1}{2}}|\ln\varepsilon|^{-9}.
	\end{align}
	By H\"{o}lder's inequality, Sobolev's inequality and Young's inequality, we have
	\begin{align*}
	&C\Big|\int_{0}^{t}\int_{\Omega} \frac{\gamma - 1}{R}\kappa\varepsilon\frac{\rho}{\theta^2}\frac{|\nabla^2\Psi|^2}{2}\triangle\xi dxdt\Big|
	\leq C\varepsilon \int_{0}^{t} \|\nabla^2\xi\|\|\nabla^2\Psi\|_{L^4}^2 dt \\
	&\leq C\varepsilon \int_{0}^{t} \|\nabla^2\xi\|\|\nabla^2\Psi\|^{1/2}\|\nabla^2\Psi\|_1^{3/2} dt\\
	&
	\leq \frac{\varepsilon}{160}\int_{0}^{t} \|\nabla^2\Psi\|_1^2 dt + C\varepsilon \int_{0}^{t} \|\nabla^2\xi\|^4\|\nabla^2\Psi\|^2 dt \\
	& \leq \frac{\varepsilon}{160}\int_{0}^{t} \|\nabla^2\Psi\|_1^2 dt + C_T\varepsilon^{\frac{8}{3}}|\ln\varepsilon|^{-12}.
	\end{align*}	
We can deduce from Lemma \ref{lemma2.2} and Lemma \ref{lemma2.3} that
	\begin{align*}
	C\Big|\int_{0}^{t}\int_{\Omega} \frac{\gamma - 1}{R}\kappa\varepsilon\frac{\rho}{\theta^2}\tilde{\theta}_{x_1x_1}\frac{|\nabla^2\Psi|^2}{2} dxdt\Big|
	\leq C\varepsilon \int_{0}^{t} \|\tilde{\theta}_{x_1x_1}\|_{L^\infty}\|\nabla^2\Psi\|^2 dt
	 \leq C_T\varepsilon^{\frac{4}{3}}|\ln\varepsilon|^{-10}.
	\end{align*}	
	By Sobolev's inequality, H\"{o}lder's inequality and Young's inequality, it can be derived that
	\begin{align*}
	&C\Big|\int_{0}^{t}\int_{\Omega} \frac{\gamma - 1}{R}\lambda\varepsilon\frac{\rho}{\theta^2}(\div\Psi)^2\frac{|\nabla^2\Psi|^2}{2} dxdt\Big|
	\leq C\varepsilon \int_{0}^{t} \|\nabla\Psi\|_{L^4}^2\|\nabla^2\Psi\|_{L^4}^2 dt \\
	& \leq C\varepsilon \int_{0}^{t} \|\nabla\Psi\|^{1/2}\|\nabla\Psi\|_1^{3/2}\|\nabla^2\Psi\|^{1/2}\|\nabla^2\Psi\|_1^{3/2} dt \\
	& \leq \frac{\varepsilon}{160}\int_{0}^{t} \|\nabla^2\Psi\|_1^2 dt + C\varepsilon \int_{0}^{t} \|\nabla\Psi\|^2\|\nabla\Psi\|_1^6\|\nabla^2\Psi\|^2 dt \\
	& \leq \frac{\varepsilon}{160}\int_{0}^{t} \|\nabla^2\Psi\|_1^2 dt + C_T\varepsilon^{\frac{14}{3}}|\ln\varepsilon|^{-16},
	\end{align*}
	and
	\begin{align*}
	&C\Big|\int_{0}^{t}\int_{\Omega} R\varphi_{x_i}\nabla\varphi\cdot\nabla\triangle\psi_i dxdt\Big|
	 \leq \frac{\varepsilon}{160}\int_{0}^{t} \|\nabla^3\Psi\|^2 dt + C\varepsilon^{-1} \int_{0}^{t} \|\nabla\varphi\|_{L^4}^4 dt \\
	& \leq \frac{\varepsilon}{160}\int_{0}^{t} \|\nabla^3\Psi\|^2 dt + C\varepsilon^{-1} \int_{0}^{t} \|\nabla\varphi\|\|\nabla\varphi\|_1^3 dt \\
	& \leq \frac{\varepsilon}{160}\int_{0}^{t} \|\nabla^3\Psi\|^2 dt + C_T\varepsilon^{\frac{1}{12}}|\ln\varepsilon|^{-5}\sup_{0 \leq t \leq t_1(\varepsilon)}\|\nabla\varphi\|_1^2.
	\end{align*}
	By Young's inequality, Lemma \ref{lemma2.2} and Lemma \ref{lemma2.3}, one can get
	\begin{align}\label{dr1}
	&C\Big|\int_{0}^{t}\int_{\Omega} \frac{\rho^2}{\theta}\tilde{v}_{1x_1}\Psi_{x_1}\cdot\triangle\Psi_{x_1} dxdt\Big| \nonumber\\
	& \leq \frac{\varepsilon}{160} \int_{0}^{t} \|\nabla^3\Psi\|^2 dt + C\varepsilon^{-1}\int_{0}^{t} \|\tilde{v}_{1x_1}\|_{L^\infty}^2\|\Psi_{x_1}\|^2 dt \qquad\qquad\qquad \qquad \qquad \quad \\
	& \leq \frac{\varepsilon}{160} \int_{0}^{t} \|\nabla^3\Psi\|^2 dt + C_T\frac{\varepsilon^2}{\delta^9}\nonumber \\
	& \leq \frac{\varepsilon}{160} \int_{0}^{t} \|\nabla^3\Psi\|^2 dt + C_T \varepsilon^{\frac{1}{2}}|\ln\varepsilon|^{-9},\nonumber 
	\end{align}
	\begin{align}\label{dr3}
	&C\Big|\int_{0}^{t}\int_{\Omega} R\tilde{\rho}_{x_1}\nabla\varphi\cdot\triangle\Psi_{x_1} dxdt\Big| \nonumber\\
	& \leq \frac{\varepsilon}{160} \int_{0}^{t} \|\nabla^2\Psi_{x_1}\|^2 dt + C\varepsilon^{-1}\int_{0}^{t} (\|\bar{\rho}_{x_1}\nabla\varphi\|^2 + \|z_{1x_1}\nabla\varphi\|^2) dt \nonumber \\ 
	& \leq \frac{\varepsilon}{160} \int_{0}^{t} \|\nabla^2\Psi_{x_1}\|^2 dt + C\varepsilon^{-1}\frac{1}{\delta}\int_{0}^{t} \|\bar{v}_{1x_1}^{1/2}\nabla\varphi\|^2 dt + C_T\varepsilon^{-1}\frac{\varepsilon^2}{\delta^5} \sup_{0 \leq t \leq t_1(\varepsilon)}\|\nabla\varphi\|^2 \\
	& \leq \frac{\varepsilon}{160} \int_{0}^{t} \|\nabla^2\Psi_{x_1}\|^2 dt + C_T\frac{\varepsilon^2}{\delta^9} \nonumber \\ 
	& \leq \frac{\varepsilon}{160} \int_{0}^{t} \|\nabla^2\Psi_{x_1}\|^2 dt + C_T\varepsilon^{\frac{1}{2}}|\ln\varepsilon|^{-9},\nonumber
	\end{align}
	and
	\begin{align*}
	&C\Big|\int_{0}^{t}\int_{\Omega} \frac{\rho^2}{\theta}\tilde{v}_{1x_1x_1}\psi_1\triangle\psi_{1x_1} dxdt\Big| \\
	& \leq \frac{\varepsilon}{160} \int_{0}^{t} \|\nabla^2\psi_{1x_1}\|^2 dt + C\varepsilon^{-1}\int_{0}^{t} \Big(\|\bar{v}_{1x_1x_1}\psi_1\|^2 + \Big\|\Big(\frac{-\bar{v}_1z_1 + z_2}{\tilde{\rho}}\Big)_{x_1x_1}\psi_1\Big\|^2\Big) dt \\
	& \leq \frac{\varepsilon}{160} \int_{0}^{t} \|\nabla^2\psi_{1x_1}\|^2 dt + C\varepsilon^{-1}\frac{1}{\delta^3}\int_{0}^{t} \|\bar{v}_{1x_1}^{1/2}\psi_1\|^2 dt + C_T\varepsilon^{-1}\frac{\varepsilon^2}{\delta^7} \sup_{0 \leq t \leq t_1(\varepsilon)}\|\psi_1\|^2 \\
	& \leq \frac{\varepsilon}{160} \int_{0}^{t} \|\nabla^2\psi_{1x_1}\|^2 dt + C_T\varepsilon^{\frac{4}{3}}|\ln\varepsilon|^{-10}.
	\end{align*}
It follows from H\"{o}lder's inequality, Sobolev's inequality and Young's inequality that
	\begin{align*}
	&C\Big|\int_{0}^{t}\int_{\Omega} \frac{\mu\varepsilon}{\theta}\triangle\psi_i\nabla\varphi\cdot\nabla\triangle\psi_i dxdt\Big|
	\leq C\varepsilon \int_{0}^{t} \|\nabla^2\Psi\|_{L^4}\|\nabla\varphi\|_{L^4}\|\nabla^3\Psi\| dt \\
	& \leq C\varepsilon \int_{0}^{t} \|\nabla^2\Psi\|^{1/4}\|\nabla^2\Psi\|_1^{3/4}\|\nabla\varphi\|^{1/4}\|\nabla\varphi\|_1^{3/4}\|\nabla^3\Psi\| dt \\
	& \leq C\varepsilon \int_{0}^{t} \|\nabla^2\Psi\|^{1/4}\|\nabla\varphi\|^{1/4}\|\nabla\varphi\|_1^{3/4}\|\nabla^2\Psi\|_1^{7/4} dt \\
	& \leq \frac{\varepsilon}{160}\int_{0}^{t} \|\nabla^2\Psi\|_1^2 dt + C\varepsilon \int_{0}^{t} \|\nabla^2\Psi\|^2\|\nabla\varphi\|^2\|\nabla\varphi\|_1^6 dt \\
	& \leq \frac{\varepsilon}{160}\int_{0}^{t} \|\nabla^2\Psi\|_1^2 dt + C_T\varepsilon^{\frac{14}{3}}|\ln\varepsilon|^{-16}.
	\end{align*}
Then by Young's inequality, Lemma \ref{lemma2.2} and Lemma \ref{lemma2.3}, we infer that
	\begin{align*}
	&C\Big|\int_{0}^{t}\int_{\Omega} \frac{\mu\varepsilon}{\theta}\tilde{\rho}_{x_1}\triangle\Psi\cdot\triangle\Psi_{x_1} dxdt\Big| \\
	&
	\leq \frac{\varepsilon}{160} \int_{0}^{t} \|\nabla^2\Psi_{x_1}\|^2 dt + C\varepsilon \int_{0}^{t} \|\tilde{\rho}_{x_1}\|_{L^\infty}^2\|\nabla^2\Psi\|^2 dt \qquad\qquad\qquad \\
	& \leq \frac{\varepsilon}{160} \int_{0}^{t} \|\nabla^2\Psi_{x_1}\|^2 dt + C_T \varepsilon^{\frac{4}{3}}|\ln\varepsilon|^{-10},
	\end{align*}
	\begin{align*}
	&C\Big|\int_{0}^{t}\int_{\Omega} (2\mu + \lam)\varepsilon\Big(\frac{-\bar{v}_1z_1 + z_2}{\tilde{\rho}}\Big)_{x_1x_1x_1}\triangle\psi_{1x_1} dxdt\Big| \\
	& \leq \frac{\varepsilon}{160} \int_{0}^{t} \|\nabla^2\psi_{1x_1}\|^2 dt + C\varepsilon\int_{0}^{t}\int_{\bbr} \Big|\Big(\frac{-\bar{v}_1z_1 + z_2}{\tilde{\rho}}\Big)_{x_1x_1x_1}\Big|^2 dx_1dt \\
	& \leq \frac{\varepsilon}{160} \int_{0}^{t} \|\nabla^2\psi_{1x_1}\|^2 dt + C_T\varepsilon^{\frac{5}{3}}|\ln\varepsilon|^{-8},
	\end{align*}
	and
	\begin{align*}
	&C\Big|\int_{0}^{t}\int_{\Omega} \frac{(2\mu + \lam)\varepsilon}{\theta}\Big(\frac{-\bar{v}_1z_1 + z_2}{\tilde{\rho}}\Big)_{x_1x_1}\nabla\varphi\cdot\nabla\triangle\psi_1 dxdt\Big| \\
	& \leq \frac{\varepsilon}{160} \int_{0}^{t} \|\nabla^3\psi_1\|^2 dt + C\varepsilon \int_{0}^{t} \Big\|\Big(\frac{-\bar{v}_1z_1 + z_2}{\tilde{\rho}}\Big)_{x_1x_1}\Big\|_{L^\infty}^2\|\nabla\varphi\|^2 dt\\
	&
	\leq \frac{\varepsilon}{160} \int_{0}^{t} \|\nabla^3\psi_1\|^2 dt + C_T \varepsilon^{\frac{7}{2}}|\ln\varepsilon|^{-15}.
	\end{align*}
	On the other hand, it follows that
	\begin{align*}
	&C\Big|\int_{0}^{t}\int_{\Omega} \frac{(2\mu + \lam)\varepsilon\rho}{\tilde{\rho}\theta}\bar{v}_{1x_1x_1x_1}\varphi\triangle\psi_{1x_1} dxdt\Big| \\
	&
	\leq \frac{\varepsilon}{160} \int_{0}^{t} \|\nabla^2\psi_{1x_1}\|^2 dt + C\varepsilon \int_{0}^{t} \|\bar{v}_{1x_1x_1x_1}\varphi\|^2 dt  \qquad\qquad \qquad\qquad\qquad \qquad\\
	& \leq \frac{\varepsilon}{160} \int_{0}^{t} \|\nabla^2\psi_{1x_1}\|^2 dt + C\frac{\varepsilon}{\delta^5} \int_{0}^{t} \|\bar{v}_{1x_1}^{1/2}\varphi\|^2 dt\\
	&
	\leq \frac{\varepsilon}{160} \int_{0}^{t} \|\nabla^2\psi_{1x_1}\|^2 dt + C_T \varepsilon^{3}|\ln\varepsilon|^{-12},
	\end{align*}
	\begin{align*}
	&C\Big|\int_{0}^{t}\int_{\Omega} \frac{(2\mu + \lam)\varepsilon}{\theta}\bar{v}_{1x_1x_1}\nabla\varphi\cdot\nabla\triangle\psi_1 dxdt\Big| \\
	&
	\leq \frac{\varepsilon}{160} \int_{0}^{t} \|\nabla^3\psi_1\|^2 dt + C\varepsilon \int_{0}^{t} \|\bar{v}_{1x_1x_1}\nabla\varphi\|^2 dt \qquad\qquad \qquad\qquad\qquad\qquad \\
	& \leq \frac{\varepsilon}{160} \int_{0}^{t} \|\nabla^3\psi_1\|^2 dt + C\frac{\varepsilon}{\delta^3} \int_{0}^{t} \|\bar{v}_{1x_1}^{1/2}\nabla\varphi\|^2 dt\\
	&
	\leq \frac{\varepsilon}{160} \int_{0}^{t} \|\nabla^3\psi_1\|^2 dt + C_T \varepsilon^{\frac{13}{6}}|\ln\varepsilon|^{-11},
	\end{align*}
	and
	\begin{align*}
	&C\Big|\int_{0}^{t}\int_{\Omega} \frac{(2\mu + \lam)\varepsilon}{\tilde{\rho}^2\theta}(\tilde{\rho} + \rho)\tilde{\rho}_{x_1}\bar{v}_{1x_1x_1}\varphi\triangle\psi_{1x_1} dxdt\Big| \\
	&\leq \frac{\varepsilon}{160} \int_{0}^{t} \|\nabla^2\psi_{1x_1}\|^2 dt + C\varepsilon \int_{0}^{t} \|\tilde{\rho}_{x_1}\bar{v}_{1x_1x_1}\varphi\|^2 dt \\
	& \leq \frac{\varepsilon}{160} \int_{0}^{t} \|\nabla^2\psi_{1x_1}\|^2 dt + C_T\Big(\frac{\varepsilon}{\delta^5} + \frac{\varepsilon^3}{\delta^8}\Big) \int_{0}^{t} \|\bar{v}_{1x_1}^{1/2}\varphi\|^2 dt \qquad\qquad \qquad\qquad\\
	&\leq \frac{\varepsilon}{160} \int_{0}^{t} \|\nabla^2\psi_{1x_1}\|^2 dt + C_T \varepsilon^{3}|\ln\varepsilon|^{-12}.
	\end{align*}
	Also, we know that
	\begin{align*}
	&C\Big|\int_{0}^{t}\int_{\Omega} \frac{\rho^2}{\theta}\Big(\frac{Q_{1}}{\tilde{\rho}}\Big)_{x_1}\triangle\psi_{1x_1} dxdt\Big| \\
	& \leq \frac{\varepsilon}{160} \int_{0}^{t} \|\nabla^2\psi_{1x_1}\|^2 dt + C\varepsilon^{-1}\int_{0}^{t}\int_{\bbr} \Big|\Big(\frac{1}{\tilde{\rho}}\Big(\frac{1}{\tilde{\rho}}(-\bar{v}_1z_1 + z_2)^2\Big)_{x_1}\Big)_{x_1}\Big|^2 dx_1dt \\
	& \leq \frac{\varepsilon}{160} \int_{0}^{t} \|\nabla^2\psi_{1x_1}\|^2 dt + C_T\varepsilon^{\frac{3}{2}}|\ln\varepsilon|^{-9},
	\end{align*}
	\begin{align*}
	&C\Big|\int_{0}^{t}\int_{\Omega} \frac{2\rho}{\theta}\tilde{v}_{1x_1x_1}\psi_1\varphi_{x_i}\psi_{1x_ix_1} dxdt\Big| \\
	&\leq C \int_{0}^{t} \|\nabla\psi_{1x_1}\|^2 dt + C \int_{0}^{t} \Big(\|\bar{v}_{1x_1x_1}\nabla\varphi\|^2 + \Big\|\Big(\frac{-\bar{v}_1z_1 + z_2}{\tilde{\rho}}\Big)_{x_1x_1}\nabla\varphi\Big\|^2\Big) dt \\
	&\leq C_T\varepsilon^{\frac{2}{3}}|\ln\varepsilon|^{-8} + C_T \varepsilon^{\frac{7}{6}}|\ln\varepsilon|^{-11},
	\end{align*}
	and
	\begin{align*}
	&C\Big|\int_{0}^{t}\int_{\Omega} \frac{2(2\mu + \lam)\varepsilon}{\theta}\Big(\frac{-\bar{v}_1z_1 + z_2}{\tilde{\rho}}\Big)_{x_1x_1x_1}\nabla\varphi\cdot\nabla\psi_{1x_1} dxdt\Big| \\
	& \leq C\varepsilon \int_{0}^{t} \|\nabla\psi_{1x_1}\|^2 dt + C\varepsilon \int_{0}^{t} \Big\|\Big(\frac{-\bar{v}_1z_1 + z_2}{\tilde{\rho}}\Big)_{x_1x_1x_1}\Big\|_{L^\infty}^2\|\nabla\varphi\|^2 dt \\
	&\leq C_T \varepsilon^{\frac{5}{3}}|\ln\varepsilon|^{-8} + C_T \varepsilon^{\frac{19}{6}}|\ln\varepsilon|^{-17}.
	\end{align*}
It follows from H\"{o}lder's inequality, Sobolev's inequality, Young's inequality, Lemma \ref{lemma2.2} and Lemma \ref{lemma2.3} that
	\begin{align*}
	&C\Big|\int_{0}^{t}\int_{\Omega} \frac{2(2\mu + \lam)\varepsilon}{\rho\theta}\Big(\frac{-\bar{v}_1z_1 + z_2}{\tilde{\rho}}\Big)_{x_1x_1}\varphi_{x_i}\nabla\varphi\cdot\nabla\psi_{1x_i} dxdt\Big| \qquad\\
	& \leq C\varepsilon \int_{0}^{t} \Big\|\Big(\frac{-\bar{v}_1z_1 + z_2}{\tilde{\rho}}\Big)_{x_1x_1}\Big\|_{L^\infty}\|\nabla\varphi\|_{L^4}^2\|\nabla^2\psi_1\| dt \\
	&\leq C_T \frac{\varepsilon^2}{\delta^{7/2}} \int_{0}^{t} \|\nabla\varphi\|^{1/2}\|\nabla\varphi\|_1^{3/2}\|\nabla^2\psi_1\| dt\\
	&
	\leq \frac{1}{160} \sup_{0 \leq t \leq t_1(\varepsilon)}\|\nabla\varphi\|_1^2 + C_T \frac{\varepsilon^8}{\delta^{14}} \int_{0}^{t} \|\nabla\varphi\|^2\|\nabla^2\psi_1\|^4 dt \\
	&\leq \frac{1}{160} \sup_{0 \leq t \leq t_1(\varepsilon)}\|\nabla\varphi\|_1^2 + C_T \frac{\varepsilon^{10}}{\delta^{14}}|\ln\varepsilon|^{-4} \int_{0}^{t} \|\nabla^2\psi_1\|^2 dt \\
	&\leq \frac{1}{160} \sup_{0 \leq t \leq t_1(\varepsilon)}\|\nabla\varphi\|_1^2 + C_T \varepsilon^{\frac{25}{3}}|\ln\varepsilon|^{-26},
	\end{align*}
	\begin{align*}
	&C\Big|\int_{0}^{t}\int_{\Omega} \frac{2(2\mu + \lam)\varepsilon}{\tilde{\rho}\theta}\bar{v}_{1x_1x_1x_1}\varphi\nabla\varphi\cdot\nabla\psi_{1x_1} dxdt\Big| \\
	& \leq C\varepsilon \int_{0}^{t} \|\nabla\psi_{1x_1}\|^2 dt + C\varepsilon \int_{0}^{t} \|\bar{v}_{1x_1x_1x_1}\|_{L^\infty}^2\|\nabla\varphi\|^2 dt\qquad\qquad\\
	&
	\leq C_T \varepsilon^{\frac{5}{3}}|\ln\varepsilon|^{-8} + C_T \varepsilon^{\frac{5}{3}}|\ln\varepsilon|^{-14},
	\end{align*}
	and
	\begin{align*}
	&C\Big|\int_{0}^{t}\int_{\Omega} \frac{2(2\mu + \lam)\varepsilon}{\rho\theta}\bar{v}_{1x_1x_1}\varphi_{x_i}\nabla\varphi\cdot\nabla\psi_{1x_i} dxdt\Big| \\
&	 \leq C\varepsilon \int_{0}^{t} \|\bar{v}_{1x_1x_1}\|_{L^\infty}\|\nabla\varphi\|_{L^4}^2\|\nabla^2\psi_1\| dt \\
	&\leq C_T \frac{\varepsilon}{\delta^2} \int_{0}^{t} \|\nabla\varphi\|^{1/2}\|\nabla\varphi\|_1^{3/2}\|\nabla^2\psi_1\| dt
	\\&
	\leq \frac{1}{160} \sup_{0 \leq t \leq t_1(\varepsilon)}\|\nabla\varphi\|_1^2 + C_T \frac{\varepsilon^4}{\delta^{8}} \int_{0}^{t} \|\nabla\varphi\|^2\|\nabla^2\psi_1\|^4 dt \\
	&\leq \frac{1}{160} \sup_{0 \leq t \leq t_1(\varepsilon)}\|\nabla\varphi\|_1^2 + C_T \frac{\varepsilon^{6}}{\delta^{8}}|\ln\varepsilon|^{-4} \int_{0}^{t} \|\nabla^2\psi_1\|^2 dt\qquad\qquad\\
	&\leq \frac{1}{160} \sup_{0 \leq t \leq t_1(\varepsilon)}\|\nabla\varphi\|_1^2 + C_T \varepsilon^{\frac{16}{3}}|\ln\varepsilon|^{-16}.
	\end{align*}
	By using Young's inequality, Lemma \ref{lemma2.2} and Lemma \ref{lemma2.3}, we obtain	\begin{align*}
	&C\Big|\int_{0}^{t}\int_{\Omega} \frac{2\rho}{\theta}\Big(\frac{Q_1}{\tilde{\rho}}\Big)_{x_1}\nabla\varphi\cdot\nabla\psi_{1x_1} dxdt\Big| \\
	& \leq C \int_{0}^{t} \|\nabla\psi_{1x_1}\|^2 dt + C \int_{0}^{t} \Big\|\Big(\frac{1}{\tilde{\rho}}\Big(\frac{1}{\tilde{\rho}}(-\bar{v}_1z_1 + z_2)^2\Big)_{x_1}\Big)_{x_1}\Big\|_{L^\infty}^2\|\nabla\varphi\|^2 dt \\
	& \leq C_T \varepsilon^{\frac{2}{3}}|\ln\varepsilon|^{-8} + C_T \frac{\varepsilon^4}{\delta^{10}}\sup_{0 \leq t \leq t_1(\varepsilon)}\|\nabla\varphi\|^2
	\leq C_T \varepsilon^{\frac{2}{3}}|\ln\varepsilon|^{-8} + C_T \varepsilon^{4}|\ln\varepsilon|^{-18},\\[4mm]
	&C\Big|\int_{0}^{t}\int_{\Omega} \frac{2\rho}{\theta}\tilde{\rho}_{x_1}\tilde{v}_{1x_1}\Psi_{x_1}\cdot\Psi_{x_1x_1} dxdt\Big| \\
	& \leq C \int_{0}^{t} \|\Psi_{x_1x_1}\|^2 dt + C \int_{0}^{t} \|\tilde{\rho}_{x_1}\tilde{v}_{1x_1}\|_{L^\infty}^2\|\Psi_{x_1}\|^2 dt \\
	&
	 \leq C_T \varepsilon^{\frac{2}{3}}|\ln\varepsilon|^{-8} + C_T \varepsilon^{\frac{7}{6}}|\ln\varepsilon|^{-11},
	\end{align*}
	and
	\begin{align*}
	&C\Big|\int_{0}^{t}\int_{\Omega} \frac{2\rho}{\theta}\tilde{\rho}_{x_1}\tilde{v}_{1x_1x_1}\psi_1\psi_{1x_1x_1} dxdt\Big| \\
&	 \leq C \int_{0}^{t} \|\psi_{1x_1x_1}\|^2 dt + C \int_{0}^{t} \|\tilde{\rho}_{x_1}\tilde{v}_{1x_1x_1}\|_{L^\infty}^2\|\psi_1\|^2 dt \\
	&\leq C_T \varepsilon^{\frac{2}{3}}|\ln\varepsilon|^{-8} + C_T\Big(\frac{1}{\delta^2} + \frac{\varepsilon^2}{\delta^{5}}\Big)\Big(\frac{1}{\delta^4} + \frac{\varepsilon^2}{\delta^{7}}\Big) \sup_{0 \leq t \leq t_1(\varepsilon)}\|\psi_1\|^2\qquad\qquad\qquad \\
	&
	\leq C_T \varepsilon^{\frac{2}{3}}|\ln\varepsilon|^{-8} + C_T \varepsilon^{\frac{11}{6}}|\ln\varepsilon|^{-13}.
	\end{align*}
Moreover, it holds that
	\begin{align*}
	&C\Big|\int_{0}^{t}\int_{\Omega} \frac{2(2\mu + \lam)\varepsilon}{\theta}\tilde{\rho}_{x_1}\Big(\frac{-\bar{v}_1z_1 + z_2}{\tilde{\rho}}\Big)_{x_1x_1x_1}\psi_{1x_1x_1} dxdt\Big| \\
	& \leq C \int_{0}^{t} \|\psi_{1x_1x_1}\|^2 dt + C\varepsilon^2 \int_{0}^{t} \|\tilde{\rho}_{x_1}\|_{L^\infty}^2\Big\|\Big(\frac{-\bar{v}_1z_1 + z_2}{\tilde{\rho}}\Big)_{x_1x_1x_1}\Big\|^2 dt\qquad\\
	&
	\leq C_T \varepsilon^{\frac{2}{3}}|\ln\varepsilon|^{-8} + C_T \varepsilon^{\frac{7}{3}}|\ln\varepsilon|^{-10},\\[4mm]
	&C\Big|\int_{0}^{t}\int_{\Omega} \frac{2(2\mu + \lam)\varepsilon}{\tilde{\rho}\theta}\tilde{\rho}_{x_1}\bar{v}_{1x_1x_1x_1}\varphi\psi_{1x_1x_1} dxdt\Big| \\
	&\leq C \varepsilon \int_{0}^{t} \|\psi_{1x_1x_1}\|^2 dt + C \varepsilon \int_{0}^{t} \|\tilde{\rho}_{x_1}\|_{L^\infty}^2\|\bar{v}_{1x_1x_1x_1}\varphi\|^2 dt \\
	&\leq C_T\varepsilon^{\frac{5}{3}}|\ln\varepsilon|^{-8} + C_T\frac{\varepsilon}{\delta^5}\Big(\frac{1}{\delta^2} + \frac{\varepsilon^2}{\delta^{5}}\Big)\int_{0}^{t} \|\bar{v}_{1x_1}^{1/2}\varphi\|^2 dt\\
&\leq C_T \varepsilon^{\frac{5}{3}}|\ln\varepsilon|^{-8} + C_T \varepsilon^{\frac{8}{3}}|\ln\varepsilon|^{-14},
	\end{align*}
	and
	\begin{align*}
	&C\Big|\int_{0}^{t}\int_{\Omega} \frac{2\rho}{\theta}\tilde{\rho}_{x_1}\Big(\frac{Q_1}{\tilde{\rho}}\Big)_{x_1}\psi_{1x_1x_1} dxdt\Big| \\
	& \leq C \int_{0}^{t} \|\psi_{1x_1x_1}\|^2 dt + C \int_{0}^{t} \|\tilde{\rho}_{x_1}\|_{L^\infty}^2\Big\|\Big(\frac{1}{\tilde{\rho}}\Big(\frac{1}{\tilde{\rho}}(-\bar{v}_1z_1 + z_2)^2\Big)_{x_1}\Big)_{x_1}\Big\|^2 dt \qquad\qquad\qquad\\
	& \leq C_T \varepsilon^{\frac{2}{3}}|\ln\varepsilon|^{-8} + C_T \varepsilon^{\frac{13}{6}}|\ln\varepsilon|^{-11}.
	\end{align*}
	The remaining terms in $M_i(t,x) (i=1,2, \cdots 6)$ can be analyzed similarly. 
	
	
	Now we handle $\di  \int_{0}^{t}\int_{\Omega} \sum_{i=1}^6N_i(t, x)dxdt$ as follows. 	
	From H\"{o}lder's inequality, Sobolev's inequality and Young's inequality, we have
	\begin{align*}
	&C\Big|\int_{0}^{t}\int_{\Omega} \frac{\lambda\varepsilon}{\theta^2}(\div\Psi)^2\nabla\varphi\cdot\nabla\triangle\xi dxdt\Big|
	\leq C\varepsilon \int_{0}^{t} \|\nabla\Psi\|_{L^6}^2\|\nabla\varphi\|_{L^6}\|\nabla^3\xi\| dt \\
	& \leq C\varepsilon \int_{0}^{t} \|\nabla\Psi\|_1^2\|\nabla\varphi\|_1\|\nabla^3\xi\| dt
	 \leq \frac{\varepsilon}{160}\int_{0}^{t} \|\nabla^3\xi\|^2 dt + C\varepsilon \int_{0}^{t} \|\nabla\Psi\|_1^4\|\nabla\varphi\|_1^2 dt \qquad\quad\\
	& \leq \frac{\varepsilon}{160}\int_{0}^{t} \|\nabla^3\xi\|^2 dt + C_T(\varepsilon^4 + \varepsilon^2)|\ln\varepsilon|^{-4}\sup_{0 \leq t \leq t_1(\varepsilon)}\|\nabla\varphi\|_1^2.
	\end{align*}
	By Lemmas \ref{lemma2.2} and \ref{lemma2.3} and Young's inequality, it holds that
	\begin{align*}
	&C\Big|\int_{0}^{t}\int_{\Omega} \frac{2\rho}{\theta^2}\tilde{v}_{1x_1x_1}(2\mu\varepsilon\psi_{1x_1} + \lambda\varepsilon\div\Psi)\triangle\xi_{x_1} dxdt\Big| \\
	&\leq \frac{\varepsilon}{160} \int_{0}^{t} \|\nabla^2\xi_{x_1}\|^2 dt + C \varepsilon \int_{0}^{t} \|\tilde{v}_{1x_1x_1}\|_{L^\infty}^2\|\nabla\Psi\|^2 dt\qquad\qquad\qquad\qquad\qquad\qquad\qquad \\
	&
	\leq \frac{\varepsilon}{160} \int_{0}^{t} \|\nabla^2\xi_{x_1}\|^2 dt + C_T \varepsilon^{\frac{13}{6}}|\ln\varepsilon|^{-11}.
	\end{align*}
	By Sobolev's inequality, H\"{o}lder's inequality and Young's inequality, one can get
	\begin{align*}
	&C\Big|\int_{0}^{t}\int_{\Omega} \frac{2\lambda\varepsilon}{\rho\theta^2}\varphi_{x_i}(\div\Psi)^2\nabla\varphi\cdot\nabla\xi_{x_i} dxdt\Big| \\
    &\leq C\varepsilon \int_{0}^{t} \|\nabla\Psi\|_{L^\infty}\|\nabla\Psi\|_{L^6}\|\nabla\varphi\|_{L^6}^2\|\nabla^2\xi\| dt \\
	&\leq C\varepsilon \int_{0}^{t} \left(\|\nabla\Psi\|^{1/2}\|\nabla^2\Psi\|^{1/2} + \|\nabla^2\Psi\|^{1/2}\|\nabla^3\Psi\|^{1/2}\right)\|\nabla\Psi\|_1\|\nabla\varphi\|_1^2\|\nabla^2\xi\| dt \\
	&\leq C\varepsilon \int_{0}^{t} \|\nabla\Psi\|^{1/2}\|\nabla^2\Psi\|^{1/2}\|\nabla\Psi\|_1\|\nabla\varphi\|_1^2\|\nabla^2\xi\| dt + \frac{\varepsilon}{160} \int_{0}^{t} \|\nabla^3\Psi\|^2 dt \\
	&\quad + C\varepsilon \int_{0}^{t} \|\nabla^2\Psi\|^{2/3}\|\nabla\Psi\|_1^{4/3}\|\nabla\varphi\|_1^{8/3}\|\nabla^2\xi\|^{4/3} dt \\
	& \leq \frac{\varepsilon}{160} \int_{0}^{t} \|\nabla^3\Psi\|^2 dt + C_T\left[\left(\varepsilon^{\frac 52} + \varepsilon^2\right)|\ln\varepsilon|^{-3} + \left(\varepsilon^3 + \varepsilon^2\right)|\ln\varepsilon|^{-4}\right]\sup_{0 \leq t \leq t_1(\varepsilon)}\|\nabla\varphi\|_1^2.
	\end{align*}
The rest of the terms in $N_i(t,x)$   can be treated  in a similar fashion. 

With all these estimates for \eqref{24} in hand, together with the elliptic estimates $\|\triangle\Psi\| \sim\|\nabla^2\Psi\|$, $\|\triangle\xi\| \sim\|\nabla^2\xi\|, \|\nabla\triangle\Psi\| \sim \|\nabla^3\Psi\|$ and $\|\nabla\triangle\xi\| \sim \|\nabla^3\xi\|$, we can take $\varepsilon$ sufficiently small and   complete the proof of Lemma \ref{lemma4.3}.
\end{proof}

Finally, taking 
$\varepsilon$ sufficiently small,  e.g.,  $0<\varepsilon \leq \varepsilon_2$ for some $\varepsilon_2>0$, we can close the {\it a priori} assumptions \eqref{PA} and achieve the desired {\it a priori} estimates \eqref{PRO3.2}.  The proof of  the 
Proposition \ref{proposition3.2} is completed.

 \bigskip

 \section*{Acknowledgement}
 The research  of D. Wang was  partially supported by the
National Science Foundation under grant  DMS-1907519.
The research of Y. Wang was supported by NSFC grants No. 11671385 and 11688101 and CAS Interdisciplinary Innovation Team.

 \bigskip


\end{document}